\newcommand{\ie}{\emph{i.e.}}
\newcommand{\eg}{\emph{e.g.}}
\newcommand{\cf}{\emph{cf}}
\newcommand{\Nat}{\mathbb{N}}
\newcommand{\Int}{\mathbb{Z}}
\newcommand{\Real}{\mathbb{R}}
\newcommand{\sii}{L^2}
\newcommand{\si}{L^1}
\newcommand{\Dom}{\mathfrak{D}}
\newcommand{\supp}{\mathop{\mathrm{supp}}\nolimits}
\newcommand{\Hilbert}{\mathcal{H}}
\newcommand{\eps}{\varepsilon}
\numberwithin{equation}{section}
\newtheorem{Lemma}{Lemma}[section]
\newtheorem{Theorem}{Theorem}[section]
\newtheorem{Proposition}{Proposition}[section]
\newtheorem{Corollary}{Corollary}[section]
\newtheorem*{Conjecture}{Conjecture}
\theoremstyle{remark}
\newtheorem{Remark}{Remark}[section]
\theoremstyle{definition}
\newtheorem{Definition}{Definition}[section]
\begin{document}
%
\title{\textbf{\Large
The Hardy inequality and the heat equation in twisted tubes
}}
\author{
David Krej\v{c}i\v{r}{\'\i}k$^a$ \ and \ Enrique Zuazua$^b$
}
\date{
\ \vspace{-5ex} \\
\small
\emph{
\begin{quote}
\begin{itemize}
\item[$a)$]
Department of Theoretical Physics,
Nuclear Physics Institute,
A\-cad\-e\-my of Sciences, 25068 \v Re\v z, Czech Republic;
krejcirik@ujf.cas.cz
\item[$b)$]
Basque Center for Applied Mathematics,
Bizkaia Technology Park, Building 500,
48160 Derio, Basque Country, Spain;
zuazua@bcamath.org
\end{itemize}
\end{quote}
}
\ \vspace{-4ex} \\
\begin{center}
22 June 2009
\end{center}
\ \vspace{-7ex} \\
}
\maketitle
\begin{abstract}
We show that a twist of a three-dimensional tube of uniform cross-section
yields an improved decay rate for the heat semigroup associated
with the Dirichlet Laplacian in the tube.
The proof employs Hardy inequalities
for the Dirichlet Laplacian in twisted tubes
and the method of self-similar variables
and weighted Sobolev spaces for the heat equation.
\end{abstract}
%
{\scriptsize
\tableofcontents
}

%
\newpage
\section{Introduction}
%
It has been shown recently in~\cite{EKK} that a local twist of
a straight three-di\-men\-sional tube $\Omega_0:=\Real\times\omega$
of non-circular cross-section $\omega\subset\Real^2$
leads to an effective repulsive interaction
in the Schr\"odinger equation of a quantum particle
constrained to the twisted tube~$\Omega_\theta$.
More precisely, there is a Hardy-type inequality
for the particle Hamiltonian modelled
by the Dirichlet Laplacian~$-\Delta_D^{\Omega_\theta}$
at its threshold energy~$E_1$
if, and only if, the tube is twisted
(\cf~Figure~\ref{Fig.3D_twist}).
That is, the inequality
\begin{equation}\label{I.Hardy}
  -\Delta_D^{\Omega_\theta} - E_1 \geq \varrho
\end{equation}
holds true, in the sense of quadratic forms in $\sii(\Omega_\theta)$,
with a positive function~$\varrho$ provided that the tube is twisted,
while~$\varrho$ is necessarily zero for~$\Omega_0$.
Here~$E_1$ coincides with the first eigenvalue
of the Dirichlet Laplacian~$-\Delta_D^{\omega}$
in the cross-section~$\omega$.

\begin{figure}[h!]
\begin{center}
\epsfig{file=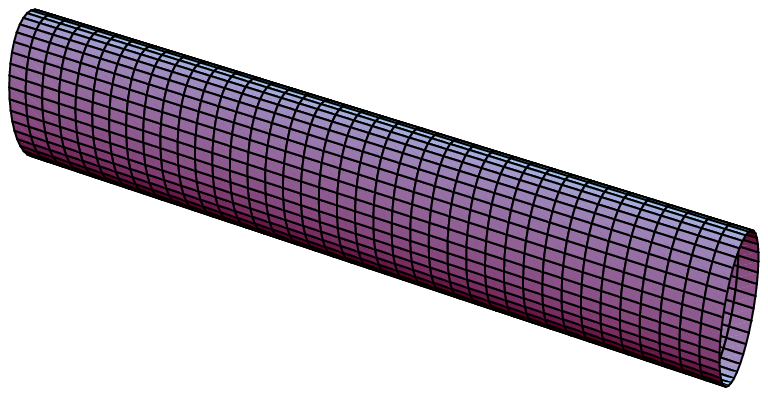,width=0.45\textwidth}
\epsfig{file=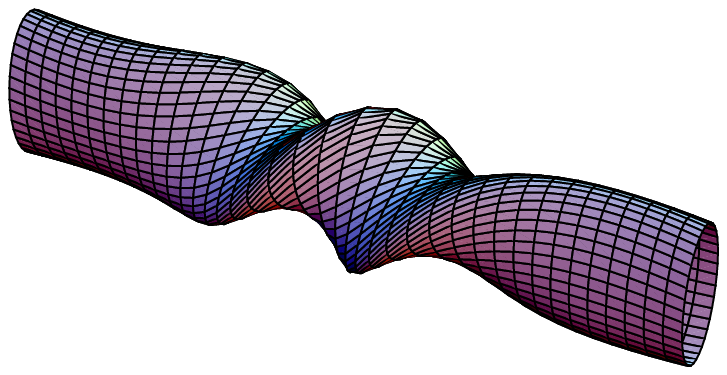,width=0.49\textwidth}
\end{center}
\caption{
Untwisted and twisted tubes of elliptical cross-section.
}
\label{Fig.3D_twist}
\end{figure}

The inequality~\eqref{I.Hardy} has important consequences
for conductance properties of quantum waveguides.
It clearly implies the absence of bound states
(\ie, stationary solutions to the Schr\"odinger equation)
below the energy~$E_1$ even if the particle
is subjected to a small attractive interaction,
which can be either of potential or geometric origin
(\cf~\cite{EKK} for more details).
At the same time, a repulsive effect of twisting
on eigenvalues embedded in the essential spectrum
has been demonstrated in~\cite{Kovarik-Sacchetti_2007}.
Hence, roughly speaking, the twist prevents the particle
to be trapped in the waveguide.
Additional spectral properties of twisted tubes have been studied in
\cite{EKov_2005,K6-erratum,Briet-Kovarik-Raikov-Soccorsi_2008}.

It is natural to ask whether the repulsive
effect of twisting demonstrated in~\cite{EKK} in the quantum context
has its counterpart in other areas of physics, too.
The present paper gives an affirmative answer to this question
for systems modelled by the diffusion equation in the tube~$\Omega_\theta$:
\begin{equation}\label{I.heat}
  u_t - \Delta u = 0
  \,,
\end{equation}
subject to Dirichlet boundary conditions on~$\partial\Omega_\theta$.
Indeed, we show that the twist is responsible
for a faster convergence of the solutions of~\eqref{I.heat}
to the (zero) stable equilibrium.
The second objective of the paper is to give a new (simpler and more direct)
proof of the Hardy inequality~\eqref{I.Hardy}
under weaker conditions than those in~\cite{EKK}.

\subsection{The main result}
%
Before stating the main result about the large time
behaviour of the solutions to~\eqref{I.heat},
let us make some comments on the subtleties
arising with the study of the heat equation in~$\Omega_\theta$.

The specific deformation~$\Omega_\theta$ of~$\Omega_0$
via twisting we consider can be visualized as follows:
instead of simply translating~$\omega$ along~$\Real$
we also allow the (non-circular) cross-section~$\omega$ to rotate
with respect to a (non-constant) angle $x_1\mapsto\theta(x_1)$.
See Figure~\ref{Fig.3D_twist}
(the precise definition is postponed until Section~\ref{Sec.Pre},
\cf~Definition~\ref{definition}).
We assume that the deformation is local, \ie,
\begin{equation}\label{locally}
  \dot\theta
  \
  \mbox{has compact support in $\Real$}.
\end{equation}
Then the straight and twisted tubes have the same spectrum
(\cf~\cite[Sec.~4]{K6}):
\begin{equation}\label{spectrum}
  \sigma(-\Delta_D^{\Omega_\theta})
  = \sigma_\mathrm{ess}(-\Delta_D^{\Omega_\theta})
  = [E_1,\infty)
  \,.
\end{equation}
The fine difference between twisted and untwisted tubes
in the spectral setting is reflected in the existence of~\eqref{I.Hardy}
for the former.

In view of the spectral mapping theorem,
the indifference~\eqref{spectrum} transfers to the following identity
for the heat semigroup:
\begin{equation}\label{E.decay}
  \forall t \geq 0 \,, \qquad
  \big\|
  e^{\Delta_D^{\Omega_\theta} t}
  \big\|_{\sii(\Omega_\theta)\to\sii(\Omega_\theta)}
  = e^{-E_1 t}
  \,,
\end{equation}
irrespectively whether the tube~$\Omega_\theta$ is twisted or not.
That is, we clearly have the exponential decay
\begin{equation}\label{solution.norate}
  \|u(t)\|_{\sii(\Omega_\theta)}
  \leq e^{-E_1 t} \, \|u_0\|_{\sii(\Omega_\theta)}
\end{equation}
for each time $t \geq 0$ and any initial datum~$u_0$ of~\eqref{I.heat}.
To obtain some finer differences as regards the time-decay of solutions,
it is therefore natural to consider rather the \emph{``shifted''} semigroup
\begin{equation}\label{semigroup}
  S(t) := e^{(\Delta_D^{\Omega}+E_1)t}
\end{equation}
as an operator from a \emph{subspace}
of $\sii(\Omega_\theta)$ to $\sii(\Omega_\theta)$.

In this paper we mainly (but not exclusively)
consider the subspace of initial data given by the weighted space
\begin{equation}\label{weight}
  \sii(\Omega_\theta,K)
  \qquad\mbox{with}\qquad
  K(x) := e^{x_1^2/4}
  \,,
\end{equation}
and study the asymptotic properties of the semigroup
via the \emph{decay rate} defined by
\begin{equation*}
  \Gamma(\Omega_\theta) :=
  \sup \Big\{ \Gamma \left| \
  \exists C_\Gamma > 0, \, \forall t \geq 0, \
  \|S(t)\|_{
  \sii(\Omega_\theta,K)
  \to
  \sii(\Omega_\theta)
  }
  \leq C_\Gamma \, (1+t)^{-\Gamma}
  \Big\} \right.
  \!.
\end{equation*}
Our main result reads as follows:
\begin{Theorem}\label{Thm.rate}
Let $\theta\in C^1(\Real)$ satisfy~\eqref{locally}.
We have
$$
  \Gamma(\Omega_\theta)
  \begin{cases}
    \, = 1/4
    & \mbox{if $\Omega_\theta$ is untwisted},
    \\
    \, \geq 3/4
    & \mbox{if $\Omega_\theta$ is twisted}.
  \end{cases}
$$
\end{Theorem}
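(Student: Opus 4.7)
\medskip\noindent
\textbf{Proof proposal.}
The plan is to combine the Hardy inequality~\eqref{I.Hardy} with the method of self-similar variables and weighted energy estimates. As a first step, I would use the twisting diffeomorphism to transfer $-\Delta_D^{\Omega_\theta}$ to a unitarily equivalent operator $H_\theta$ on $\sii(\Omega_0)$ with $\Omega_0:=\Real\times\omega$; the associated quadratic form differs from that of $-\Delta_D^{\Omega_0}$ only through terms involving~$\dot\theta$, and the Hardy inequality reads
\begin{equation*}
  Q_\theta[u] - E_1\,\|u\|^2 \,\geq\, \int_{\Omega_0}\!\varrho\,|u|^2,
\end{equation*}
with $\varrho\geq 0$ compactly supported in $x_1$.

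Next I would implement the self-similar rescaling $y:=x_1/\sqrt{1+t}$, $s:=\log(1+t)$, and $\tilde u(s,y,x'):=(1+t)^{1/4}\,u(t,\sqrt{1+t}\,y,x')$. This map is an $\sii$-isometry and sends the weight $K(x)=e^{x_1^2/4}$ at $t=0$ to the fixed Gaussian $e^{y^2/4}$ in the new variable. A direct computation turns the shifted heat equation into
\begin{equation*}
  \partial_s\tilde u \,=\, -L_0\,\tilde u + \tfrac{1}{4}\,\tilde u + e^{s}\bigl(\Delta_{x'}+E_1\bigr)\tilde u + (\text{twist terms}),
\end{equation*}
where $L_0:=-\partial_y^2 - (y/2)\,\partial_y$ satisfies $\langle L_0 v,v\rangle_K = \|\partial_y v\|_K^2$ and, via the unitary $v\mapsto v\,e^{y^2/8}$ onto a rescaled harmonic oscillator, has pure point spectrum $\{\tfrac{1}{2},1,\tfrac{3}{2},\ldots\}$ in $\sii(\Real,e^{y^2/4})$ with ground state $v_0(y)=e^{-y^2/4}$. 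Combining the $K$-weighted energy identity with the Dirichlet bound $\|\nabla_{x'}\tilde u\|^2\geq E_1\,\|\tilde u\|^2$ produces
\begin{equation*}
  \tfrac{d}{ds}\|\tilde u(s)\|_K^2 \,\leq\, -2\,\|\partial_y\tilde u(s)\|_K^2 + \tfrac{1}{2}\,\|\tilde u(s)\|_K^2,
\end{equation*}
and the weighted Poincar\'e inequality $\|\partial_y v\|_K^2\geq \tfrac12\,\|v\|_K^2$ (first eigenvalue of $L_0$) closes the Gronwall loop to give $\|\tilde u(s)\|_K\leq e^{-s/4}\,\|u_0\|_K$, hence $\Gamma(\Omega_0)\geq 1/4$ in the untwisted case.

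In the twisted case the Hardy inequality contributes, after rescaling, an additional nonnegative dissipation of order $\int\varrho(\sqrt{1+t}\,y,x')|\tilde u|^2\,dy\,dx'$, whose effective prefactor scales as $e^{-s/2}$ and is concentrated near $y=0$. The target rate $3/4$ amounts to replacing the first eigenvalue $\tfrac12$ of $L_0$ by the second eigenvalue $1$, yielding $\tfrac{d}{ds}\|\tilde u\|_K^2\leq -\tfrac32\,\|\tilde u\|_K^2$; equivalently, the mass of $\tilde u$ must become asymptotically orthogonal in $\sii(\Omega_0,e^{y^2/4})$ to the ground mode $v_0\otimes\phi_1$, where $\phi_1$ is the cross-sectional Dirichlet ground state. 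This is the main obstacle, since the rescaled Hardy weight is too weak to supply a static spectral gap improvement at each~$s$. I would therefore argue dynamically, introducing a Lyapunov-type functional that combines $\|\tilde u(s)\|_K^2$ with the squared projection $|\langle\tilde u,v_0\otimes\phi_1\rangle_K|^2$ and showing, by exploiting the small-but-persistent Hardy contribution, that this projection decays faster than its orthogonal complement, after which the second-eigenvalue Poincar\'e inequality takes over and yields the claimed $3/4$. Ancillary technical difficulties — closedness of the rescaled quadratic forms, the indefinite twist cross-terms mixing $\partial_y$ and the angular derivative $\partial_\alpha:=x_2\partial_{x_3}-x_3\partial_{x_2}$, and the collapse of $\supp\dot\theta$ to $\{y=0\}$ — should be handled by cut-offs in $s$, Hermite-function expansions in~$y$, and cross-sectional Fourier expansions.
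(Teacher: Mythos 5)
Your framework—straightening the tube, passing to self-similar variables, working in the Gaussian-weighted space, and identifying the limiting one-dimensional operator—matches the paper's, and your algebra (the $1/4$ and $3/4$ rates from the first two eigenvalues of the Fokker--Planck-type operator) is correct. However, there are two genuine gaps.

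First, for the untwisted case the theorem claims $\Gamma(\Omega_\theta)=1/4$, not merely $\geq 1/4$. Your argument gives only the lower bound; you must also exhibit initial data showing the decay cannot be faster. The paper does this by choosing $u_0=\varphi_n\otimes\mathcal{J}_1$ with the Gaussian mollifier~\eqref{mollifier}, solving the separated equation explicitly by Fourier transform, and checking the quotient directly (Proposition~\ref{Prop.decay.1D} and Corollary~\ref{Corol.norate}). Without this step the identity $\Gamma=1/4$ is unproved.

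Second, and more seriously, for the twisted case you dismiss the ``static'' spectral route (``the rescaled Hardy weight is too weak to supply a static spectral gap improvement at each~$s$'') and then only sketch an unproved Lyapunov-functional replacement. In fact the static spectral route is exactly what works, once interpreted asymptotically. The key point you miss is that the rescaled twist $\sigma_s(y_1)=e^{s/2}\dot\theta(e^{s/2}y_1)$ concentrates at $y_1=0$ like a Dirac mass as $s\to\infty$; combined with the Hardy inequality, this forces the resolvent family of the $s$-dependent generator to converge (in the strong-resolvent sense, after projection onto the cross-sectional ground mode) to a one-dimensional harmonic oscillator \emph{with an extra Dirichlet condition at the origin}. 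That Dirichlet condition kills the ground state of the oscillator, so the limiting bottom eigenvalue is the \emph{second} oscillator level, i.e.\ $3/4$. This is the content of Proposition~\ref{Prop.strong} and Corollary~\ref{Corol.strong}: one proves $\mu(s)\to 3/4$, then integrates the Gronwall estimate $\|\tilde u(s)\|_{\Hilbert_1}\leq\|\tilde u_0\|_{\Hilbert_1}\exp(-\int_0^s\mu)$ using $\mu(s)\geq 3/4-\eps$ for $s\geq s_\eps$. The proof of the resolvent convergence is precisely where the Hardy inequality enters nontrivially (see \eqref{unself}): it yields the uniform bound $\|\rho_s\psi_s\|^2\leq C e^{-s}$, which in turn forces $\varphi_\infty(0)=0$ for any weak limit point of the one-dimensional projections of the resolvents. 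Your Lyapunov-functional proposal, by contrast, requires you to establish quantitatively that the projection onto the ground mode decays strictly faster; you offer no mechanism for this, and crucially the limiting operator does not even have $v_0$ in its form domain, so ``asymptotic orthogonality to $v_0\otimes\phi_1$'' is not the right way to phrase the phenomenon. As written, the twisted half of the proof is a plausible plan with the central step missing.
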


The statement of the theorem for solutions~$u$ of~\eqref{I.heat}
in~$\Omega_\theta$ can be reformulated as follows.
For every $\Gamma < \Gamma(\Omega_\theta)$, there exists
a positive constant~$C_\Gamma$ such that
\begin{equation}\label{solution.rate}
  \|u(t)\|_{\sii(\Omega_\theta)}
  \leq C_\Gamma \, (1+t)^{-\Gamma} \, e^{-E_1 t} \,
  \|u_0\|_{\sii(\Omega_\theta,K)}
\end{equation}
for each time $t \geq 0$ and any initial datum $u_0 \in \sii(\Omega_\theta,K)$.
This should be compared with the inequality~\eqref{solution.norate}
which is sharp in the sense that it does not allow
for any extra polynomial-type decay rate due to~\eqref{E.decay}.
On the other hand, we see that the decay rate
is at least three times better in a twisted tube
provided that the initial data are restricted to the weighted space.

A type of the estimate~\eqref{solution.rate} in an untwisted tube
can be obtained in a less restrictive weighted space
(\cf~Theorem~\ref{Thm.decay.1D}).
The power~$1/4$ actually reflects the quasi-one-dimensional
nature of our model.
Indeed, in the whole Euclidean space
one has the well known dimensional bound
\begin{equation}\label{P.decay}
  \forall t \geq 0 \,, \qquad
  \big\|e^{\Delta_D^{\Real^d} t}\big\|_{
  \sii(\Real^d,K)
  \to
  \sii(\Real^d)
  }
  \, \leq \, (1+t)^{-d/4}
  \,.
\end{equation}
The fact that the power~$1/4$ is optimal for untwisted tubes
can be established quite easily by a ``separation of variables''
(\cf~Proposition~\ref{Prop.decay.1D}).
The fine effect of twisting is then reflected in the positivity
of $\Gamma(\Omega_\theta)-1/4$;
in view of~\eqref{P.decay}, it can be interpreted as
``enlarging the dimension'' of the tube.

\subsection{The idea of the proof}
%
The principal idea behind the main result of Theorem~\ref{Thm.rate},
\ie\ the better decay rate in twisted tubes,
is the positivity of the function~$\varrho$ in~\eqref{I.Hardy}.
In fact, Hardy inequalities have already been used
as an essential tool to study the asymptotic behaviour of the heat equation
in other situations~\cite{Cabre-Martel_1999,Vazquez-Zuazua_2000}.
However, it should be stressed that Theorem~\ref{Thm.rate} does not follow
as a direct consequence of~\eqref{I.Hardy} by some energy estimates
(\cf~Section~\ref{Sec.failure}) but that important and further
technical developments that we explain now are needed.
Nevertheless, overall, the main result of the paper confirms that the Hardy
inequalities end up enhancing the decay rate
of solutions..

Let us now briefly describe our proof (as given in Section~\ref{Sec.similar})
that there is the extra decay rate if the tube is twisted.
\smallskip \\
\textbf{I.}
First, we map the twisted tube~$\Omega_\theta$
to the straight one~$\Omega_0$ by a change of variables,
and consider rather the transformed (and shifted by~$E_1$) equation
\begin{equation}\label{heat.straight}
  u_t
  - (\partial_1-\dot{\theta}\,\partial_\tau)^2u
  - \Delta' u - E_1 u = 0
\end{equation}
in~$\Omega_0$ instead of~\eqref{I.heat}.
Here $-\Delta' := - \partial_2^2 - \partial_3^2$
and $\partial_\tau := x_3 \partial_2 - x_2 \partial_3$,
with $x=(x_1,x_2,x_3) \in \Omega_0$, denote the ``transverse'' Laplace
and angular-derivative operators, respectively.%
\smallskip \\
\textbf{II.}
The main ingredient in the subsequent analysis
is the method of self-similar solutions
developed in the whole Euclidean space
by Escobedo and Kavian~\cite{Escobedo-Kavian_1987}.
Writing
\begin{equation}\label{SST}
  \tilde{u}(y_1,y_2,y_3,s)
  = e^{s/4} u(e^{s/2}y_1,y_2,y_3,e^s-1)
  \,,
\end{equation}
the equation~\eqref{heat.straight} is transformed to
\begin{equation}\label{heat.similar}
  \tilde{u}_s
  - \mbox{$\frac{1}{2}$} \, y_1 \;\! \partial_1 \tilde{u}
  - (\partial_1-\sigma_s\,\partial_\tau)^2 \tilde{u}
  - e^s \, \Delta' \tilde{u}
  - E_1 \, e^s \, \tilde{u}
  - \mbox{$\frac{1}{4}$} \, \tilde{u}
  = 0
\end{equation}
in self-similarity variables $(y,s)\in\Omega_0\times(0,\infty)$,
where
\begin{equation}\label{sigma}
  \sigma_s(y_1) := e^{s/2} \dot{\theta}(e^{s/2}y_1)
  \,.
\end{equation}

Note that~\eqref{heat.similar} is a parabolic equation
with \emph{time-dependent} coefficients.
This non-autonomous feature is a consequence of
the non-trivial geometry we deal with
and represents thus the main difficulty in our study.
We note that an analogous difficulty has been encountered
previously for a convection-diffusion equation
in the whole space but with a variable diffusion coefficient
\cite{Duro-Zuazua_1999}.
\smallskip \\
\textbf{III.}
We reconsider~\eqref{heat.similar}
in the weighted space~\eqref{weight} and show
that the associated generator has purely discrete spectrum then.
Now a difference with respect to the self-similarity
transformation in the whole Euclidean space is that
the generator is not a symmetric operator if the tube is twisted.
However, this is not a significant obstacle since only the real
part of the corresponding quadratic form
is relevant for subsequent energy estimates (\cf~\eqref{formal}).
\smallskip \\
\textbf{IV.}
Finally, we look at the asymptotic behaviour of~\eqref{heat.similar}
as the self-similar time~$s$ tends to infinity.
Assume that the tube is twisted.
The scaling coming from the self-similarity transformation
is such that the function~\eqref{sigma} converges
in a distributional sense to a multiple of the delta function
supported at zero as $s\to\infty$.
The square of~$\sigma_s$ becomes therefore extremely singular
at the section $\{0\}\times\omega$ of the tube for large times.
At the same time, the prefactors~$e^s$ in~\eqref{heat.similar} diverge
exactly as if the cross-section of the tube shrunk to zero $s\to\infty$.
Taking these two simultaneous limits into account,
it is expectable that~\eqref{heat.similar}
will be approximated for large times by
the essentially one-dimensional problem
\begin{equation}\label{heat.1D}
  \varphi_s
  - \mbox{$\frac{1}{2}$} \, y_1 \;\! \varphi_{y_1}
  - \varphi_{y_1y_1}
  - \mbox{$\frac{1}{4}$} \, \varphi
  = 0
  \,, \qquad
  s \in (0,\infty), \ y_1 \in \Real \,,
\end{equation}
with an extra Dirichlet boundary condition at $y_1=0$.
This evolution equation is explicitly solvable in $\sii(\Real,K)$
and it is easy to see that
\begin{equation}\label{heat.1D.estimate}
  \|\varphi\|_{\sii(\Real,K)}
  \leq e^{-\frac{3}{4} s} \, \|\varphi_0\|_{\sii(\Real,K)}
  \,,
\end{equation}
for any initial datum~$\varphi_0$.
Here the exponential decay rate transfers to a polynomial one
after returning to the original time~$t$, and the number~$3/4$
gives rise to that of the bound of Theorem~\ref{Thm.rate} in the twisted case.

On the other hand, we get just~$1/4$ in~\eqref{heat.1D.estimate}
provided that the tube is untwisted
(which corresponds to imposing no extra condition at $y_1=0$).

\smallskip
Two comments are in order.
First, we do not establish any theorem
that solutions of~\eqref{heat.similar} can be approximated
by those of~\eqref{heat.1D} as $s \to \infty$.
We only show a strong-resolvent convergence for operators
related to their generators (Proposition~\ref{Prop.strong}).
This is, however, sufficient to prove Theorem~\ref{Thm.rate}
with help of energy estimates.
Proposition~\ref{Prop.strong} is probably the most
significant auxiliary result of the paper
and we believe it is interesting in its own right.

Second, in the proof of Proposition~\ref{Prop.strong}
we essentially use the existence of
the Hardy inequality~\eqref{I.Hardy} in twisted tubes.
In fact, the positivity of~$\varrho$ is directly responsible
for the extra Dirichlet boundary condition of~\eqref{heat.1D}.
Since the Hardy inequality holds
in the Hilbert space $\sii(\Omega_0)$ (no weight),
Proposition~\ref{Prop.strong} is stated for operators
transformed to it from~\eqref{weight} by an obvious unitary transform.
In particular, the asymptotic operator~$h_D$ of Proposition~\ref{Prop.strong}
acts in a different space, $\sii(\Real)$,
but it is unitarily equivalent to the generator of~\eqref{heat.1D}.

\subsection{The content of the paper}
%
The organization of this paper is as follows.

In the following Section~\ref{Sec.Pre}
we give a precise definition of twisted tubes~$\Omega_\theta$
and the corresponding Dirichlet Laplacian $-\Delta_D^{\Omega_\theta}$.

Section~\ref{Sec.Hardy} is mainly devoted to a new proof
of the Hardy inequality (Theorem~\ref{Thm.Hardy})
as announced in~\cite{K6-erratum}.
We mention its consequences on the stability of the spectrum
of the Laplacian (Proposition~\ref{Prop.difference})
and emphasize that the Hardy weight cannot be made
arbitrarily large by increasing the twisting
(Proposition~\ref{Prop.limit}).
Finally, we establish there a new Sobolev-type inequality
in twisted tubes (Theorem~\ref{Thm.Sobolev}).

The heat equation in twisted tubes is considered in Section~\ref{Sec.heat}.
Using some energy-type estimates,
we prove in Theorems~\ref{Thm.decay.1D} and~\ref{Thm.decay}
polynomial-type decay results for the heat semigroup
as a consequence of the Sobolev and Hardy inequalities, respectively.
Unfortunately, Theorem~\ref{Thm.decay} does not represent
any improvement upon the 1/4-decay rate of Theorem~\ref{Thm.decay.1D}
which is valid in untwisted tubes as well.

The main body of the paper is therefore represented
by Section~\ref{Sec.similar}
where we develop the method of self-similar solutions
to get the improved decay rate of Theorem~\ref{Thm.rate}
as described above.
Furthermore, in Section~\ref{Sec.alternative} we establish
an alternative version of Theorem~\ref{Thm.rate}.

The paper is concluded in Section~\ref{Sec.end}
by referring to physical interpretations of the result
and to some open problems.

\section{Preliminaries}\label{Sec.Pre}
%
In this section we introduce some basic definitions
and notations we shall use throughout the paper.

\subsection{The geometry of a twisted tube}
%
Given a bounded open connected set $\omega \subset\Real^2$,
let $\Omega_0:=\Real\times\omega$ be a straight tube
of cross-section~$\omega$.
We assume no regularity hypotheses about~$\omega$.
Let $\theta:\Real\to\Real$ be a $C^1$-smooth function
with bounded derivative
(occasionally we will denote by the same symbol~$\theta$
the function $\theta \otimes 1$ on $\Omega_0$).
We introduce another tube of the same cross-section~$\omega$
as the image
$$
  \Omega_\theta := \mathcal{L}_\theta(\Omega_0)
  \,,
$$
where the mapping $\mathcal{L}_\theta:\Real^3\to\Real^3$
is given by
\begin{equation}\label{diffeomorphism}
  \mathcal{L}_\theta(x)
  := \big(
  x_1,
  x_2\cos\theta(x_1)+x_3\sin\theta(x_1),
  -x_2\sin\theta(x_1)+x_3\cos\theta(x_1)
  \big)
  \,.
\end{equation}
\begin{Definition}[Twisted and untwisted tubes]\label{definition}
We say that the tube~$\Omega_\theta$ is \emph{twisted}
if the following two conditions are satisfied:
\begin{enumerate}
\item
$\theta$ is not constant,
\item
$\omega$ is not rotationally symmetric with respect to
the origin in~$\Real^2$.
\end{enumerate}
Otherwise we say that~$\Omega_\theta$ is \emph{untwisted}.
\end{Definition}

Here the precise meaning of~$\omega$ being
``rotationally symmetric with respect to the origin in~$\Real^2$''
is that, for every $\vartheta\in(0,2\pi)$,
$$
  \omega_\vartheta :=
  \left\{
  x_2\cos\vartheta+x_3\sin\vartheta,
  -x_2\sin\vartheta+x_3\cos\vartheta
  \, \big| \, (x_2,x_3)\in\omega
  \right\}
  = \omega
  \,,
$$
with the natural convention that we identify~$\omega$ and~$\omega_\vartheta$
(and other open sets)
provided that they differ on a set of zero capacity.
Hence, modulus a set of zero capacity,
$\omega$~is rotationally symmetric
with respect to the origin in~$\Real^2$ if, and only if,
it is a disc or an annulus centered at the origin of~$\Real^2$.
In view of this convention, any untwisted $\Omega_\theta$
can be identified with the straight tube~$\Omega_0$
by an isometry of the Euclidean space.

We write $x=(x_1,x_2,x_3)$ for a point/vector in~$\Real^3$.
If~$x$ is used to denote a point in~$\Omega_0$ or~$\Omega_\theta$,
we refer to~$x_1$ and $x':=(x_2,x_3)$ as ``longitudinal''
and ``transverse'' variables in the tube, respectively.

It is easy to check that the mapping~$\mathcal{L}_\theta$ is injective
and that its Jacobian is identically equal to~$1$.
Consequently, $\mathcal{L}_\theta$ induces a (global) diffeomorphism
between~$\Omega_0$ and~$\Omega_\theta$.

\subsection{The Dirichlet Laplacian in a twisted tube}
%
It follows from the last result that~$\Omega_\theta$ is an open set.
The corresponding Dirichlet Laplacian in~$\sii(\Omega_\theta)$
can be therefore introduced in a standard way
as the self-adjoint operator $-\Delta_D^{\Omega_\theta}$
associated with the quadratic form
$$
  Q_D^{\Omega_\theta}[\Psi] := \|\nabla\Psi\|_{\sii(\Omega_\theta)}^2 ,
  \qquad
  \Psi \in \Dom(Q_D^{\Omega_\theta}) := H_0^1(\Omega_\theta)
  \,.
$$
By the representation theorem,
$
  -\Delta_D^{\Omega_\theta}\Psi = -\Delta\Psi
$
for
$
  \Psi \in \Dom(-\Delta_D^{\Omega_\theta}) :=
  \{\Psi\in H_0^1(\Omega_\theta) \, | \, \Delta\Psi \in \sii(\Omega_\theta)\}
$,
where the Laplacian $\Delta\Psi$ should be understood
in the distributional sense.

Moreover, using the diffeomorphism induced by~$\mathcal{L}_\theta$,
we can ``untwist'' the tube by expressing
the Laplacian $-\Delta_D^{\Omega_\theta}$
in the curvilinear coordinates determined by~\eqref{diffeomorphism}.
More precisely, let~$U_\theta$ be the unitary transformation
from $\sii(\Omega_\theta)$ to $\sii(\Omega_0)$ defined by
\begin{equation}\label{unitary}
  U_\theta \Psi:=\Psi\circ\mathcal{L}_\theta
  \,.
\end{equation}
It is easy to check that
$
  H_{\theta} := U_\theta(-\Delta_D^{\Omega_\theta})U_\theta^{-1}
$
is the self-adjoint operator in $\sii(\Omega_0)$
associated with the quadratic form
\begin{equation}\label{form}
  Q_\theta[\psi]
  := \|\partial_1\psi-\dot{\theta}\,\partial_\tau\psi\|_{\sii(\Omega_0)}^2
  + \|\nabla'\psi\|_{\sii(\Omega_0)}^2
  \,, \qquad
  \psi \in \Dom(Q_\theta) := H_0^1(\Omega_0)
  \,.
\end{equation}
Here $\nabla':=(\partial_2,\partial_3)$ denotes
the transverse gradient and~$\partial_\tau$ is a shorthand for
the transverse angular-derivative operator
$$
  \partial_\tau := \tau\cdot\nabla'
  = x_3 \partial_2 - x_2 \partial_3
  \,,
  \qquad\mbox{where}\qquad
  \tau(x_2,x_3) := (x_3,-x_2)
  \,.
$$
We have the point-wise estimate
\begin{equation}\label{a-estimate}
  |\partial_\tau \psi| \leq a \, |\nabla' \psi|
  \,,\qquad\mbox{where}\qquad
  a := \sup_{x'\in\omega}|x'|
  \,.
\end{equation}
The sesquilinear form associated with $Q_\theta[\cdot]$
will be denoted by $Q_\theta(\cdot,\cdot)$.
In the distributional sense, we can write
\begin{equation}\label{H.distributional}
  H_{\theta} \psi
  = - (\partial_1-\dot{\theta}\,\partial_\tau)^2\psi - \Delta'\psi
  \,,
\end{equation}
where $-\Delta' := - \partial_2^2 - \partial_3^2$
denotes the transverse Laplacian.

\section{The Hardy and Sobolev inequalities}\label{Sec.Hardy}
%
In this section we summarize basic spectral results
about the Laplacian $-\Delta_D^{\Omega_\theta}$
we shall need later to study the asymptotic behaviour
of the associated semigroup.

\subsection{The Poincar\'e inequality}
%
Let~$E_1$ be the first eigenvalue
of the Dirichlet Laplacian in~$\omega$.
Using the Poincar\'e-type inequality in the cross-section
\begin{equation}\label{Poincare}
  \|\nabla f\|_{\sii(\omega)}^2 \geq E_1 \|f\|_{\sii(\omega)}^2
  \,, \qquad
  \forall f \in H_0^1(\omega)
  \,,
\end{equation}
and Fubini's theorem, it readily follows that
$
  Q_\theta[\psi] \geq E_1 \|\psi\|_{\sii(\Omega_0)}^2
$
for every $\psi \in H_0^1(\Omega_0)$.
Or, equivalently,
\begin{equation}\label{Poincare.tube}
  -\Delta_D^{\Omega_\theta} \geq E_1
\end{equation}
in the form sense in $\sii(\Omega_\theta)$.
Consequently, the spectrum of $-\Delta_D^{\Omega_\theta}$
does not start below~$E_1$.
The result~\eqref{Poincare.tube} can be interpreted
as a Poincar\'e-type inequality
and it holds for any tube~$\Omega_\theta$.

The inequality~\eqref{Poincare.tube} is clearly sharp
for an untwisted tube, since~\eqref{spectrum} holds
in that case trivially by separation of variables.
In general, the spectrum of $-\Delta_D^{\Omega_\theta}$
can start strictly above~$E_1$
if the twisting is effective at infinity
(\cf~\cite[Corol.~6.6]{K6-erratum}).
In this paper, however, we focus on tubes
for which the energy~$E_1$ coincides with
the spectral threshold of $-\Delta_D^{\Omega_\theta}$.
This is typically the case if the twisting vanishes at infinity
(\cf~\cite[Sec.~4]{K6}).
More restrictively, we assume~\eqref{locally}.
Under this hypothesis, \eqref{spectrum}~holds
and~\eqref{Poincare.tube} is sharp in the twisted case too.

\subsection{The Poincar\'e inequality in a bounded tube}
%
For our further purposes, it is important that
a better result than~\eqref{Poincare.tube}
holds in bounded tubes.

Given a bounded open interval $I \subset \Real$,
let~$H_\theta^I$ be the ``restriction'' of~$H_\theta$
to the tube $I \times \omega$ determined by the conditions
$
  \partial_1\psi-\dot\theta\partial_\tau\psi = 0
$
on the new boundary $(\partial I) \times \omega$.
More precisely, $H_\theta^I$~is introduced as the self-adjoint
operator in $\sii(I\times\omega)$ associated with the quadratic form
\begin{align*}
  Q_\theta^I[\psi]
  &:= \|\partial_1\psi-\dot{\theta}\,\partial_\tau\psi\|_{\sii(I\times\omega)}^2
  + \|\nabla'\psi\|_{\sii(I\times\omega)}^2
  \,,
  \\
  \psi \in \Dom(Q_\theta^I)
  &:=
  \left\{
  \psi\!\upharpoonright\!(I\times\Omega) \ | \ \psi \in H_0^1(\Omega_0)
  \right\}
  \,.
\end{align*}
That is, we impose no additional boundary conditions in the form setting.

Contrary to~$H_\theta$, $H_\theta^I$~is an operator with compact resolvent.
Let $\lambda(\dot\theta,I)$ denote the lowest eigenvalue
of the shifted operator $H_\theta^I-E_1$.
We have the following variational characterization:
\begin{equation}\label{lambda}
  \lambda(\dot\theta,I)
  = \min_{\psi \in \Dom(Q_{\theta}^I) \setminus\{0\}}
  \frac{\, Q_\theta^I[\psi]
  - E_1 \|\psi\|_{\sii(I\times\omega)}^2}
  {\|\psi\|_{\sii(I\times\omega)}^2}
  \,.
\end{equation}
As in the unbounded case, \eqref{Poincare}~yields that
$\lambda(\dot\theta,I)$ is non-negative
(it is zero if the tube is untwisted).
However, thanks to the compactness,
now we have that $H_\theta^I-E_1$ is a positive operator
whenever the tube is twisted.
\begin{Lemma}\label{Lem.cornerstone}
Let $\theta \in C^1(\Real)$.
Let $I \subset \Real$ be a bounded open interval
such that $\theta\!\upharpoonright\!I$ is not constant.
Let~$\omega$ be not rotationally invariant with respect
to the origin in~$\Real^2$.
Then
$$
  \lambda(\dot\theta,I) > 0
  \,.
$$
\end{Lemma}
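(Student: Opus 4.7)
The plan is a contradiction argument. Because $I\times\omega$ is bounded and the form domain $\Dom(Q_\theta^I)$ embeds compactly into $\sii(I\times\omega)$ (by Rellich--Kondrachov applied to functions in $H^1(I\times\omega)$ that vanish on $I\times\partial\omega$, \emph{e.g.} via Aubin--Lions), the operator $H_\theta^I$ has compact resolvent, so the infimum in~\eqref{lambda} is attained. Suppose for contradiction that $\lambda(\dot\theta,I)=0$ and pick a minimizer $\psi\in\Dom(Q_\theta^I)\setminus\{0\}$.

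Next I would write $Q_\theta^I[\psi]-E_1\|\psi\|_{\sii(I\times\omega)}^2$ as the sum of the two nonnegative quantities
\begin{equation*}
\|\partial_1\psi-\dot\theta\,\partial_\tau\psi\|_{\sii(I\times\omega)}^2
\quad\text{and}\quad
\int_I\Bigl(\|\nabla'\psi(x_1,\cdot)\|_{\sii(\omega)}^2-E_1\|\psi(x_1,\cdot)\|_{\sii(\omega)}^2\Bigr)\,dx_1,
\end{equation*}
the second being nonnegative slice-by-slice by~\eqref{Poincare}; both therefore vanish. Since $E_1$ is a simple eigenvalue of $-\Delta'$ on the connected set~$\omega$, saturation of the sectional Poincar\'e inequality for a.e.\ $x_1\in I$ forces the tensor factorization $\psi(x_1,x')=\phi(x_1)\,J_1(x')$ with $\phi\in H^1(I)$ and $J_1\in H_0^1(\omega)$ the positive, $\sii(\omega)$-normalized first Dirichlet eigenfunction. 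Substituting into the vanishing first term yields
\begin{equation*}
\dot\phi(x_1)\,J_1(x')=\dot\theta(x_1)\,\phi(x_1)\,(\partial_\tau J_1)(x')
\qquad \text{a.e.\ on } I\times\omega.
\end{equation*}

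The decisive step is the linear independence of $J_1$ and $\partial_\tau J_1$ in $\sii(\omega)$. Approximating $J_1$ by $C_c^\infty(\omega)$ functions and using that the rotation field $\tau=(x_3,-x_2)$ is divergence-free, I would get the orthogonality $\int_\omega J_1\,\partial_\tau J_1\,dx'=\tfrac{1}{2}\int_\omega\tau\cdot\nabla'(J_1^2)\,dx'=0$, so $J_1$ and $\partial_\tau J_1$ are linearly dependent only if $\partial_\tau J_1\equiv 0$; the latter would mean $J_1$ is invariant under the one-parameter group of rotations about the origin, forcing its positivity set $\omega$ (modulo capacity-zero sets, in the sense of Definition~\ref{definition}) to be rotationally symmetric --- contrary to assumption. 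Therefore $\dot\phi=0$ and $\dot\theta\,\phi=0$ a.e.\ on $I$: the first makes $\phi\in H^1(I)$ constant; it cannot be zero (else $\psi=0$), so $\dot\theta\equiv 0$ on $I$, contradicting that $\theta\!\upharpoonright\!I$ is non-constant. The main subtlety, given that no regularity of $\partial\omega$ is assumed, is justifying the integration by parts and the implication ``$\partial_\tau J_1\equiv 0\Rightarrow\omega$ rotationally symmetric'' by density and capacity arguments rather than by boundary-trace computations.
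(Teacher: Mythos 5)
Your argument is correct and follows essentially the same route as the paper: by contradiction, attain the minimum via compactness of the resolvent, use the Poincar\'e equality to force the tensor factorization $\psi=\varphi\,\mathcal{J}_1$, and then use the twist-term identity together with the two key facts $(\mathcal{J}_1,\partial_\tau\mathcal{J}_1)_{\sii(\omega)}=0$ and $\partial_\tau\mathcal{J}_1\not\equiv 0$ to derive $\dot\varphi=0$ and $\dot\theta\,\varphi=0$, whence the contradiction. The only cosmetic difference is how the orthogonality is deployed: the paper expands $\|\partial_1\psi-\dot\theta\,\partial_\tau\psi\|_{\sii(I\times\omega)}^2$ and observes that the cross term drops out, leaving a sum of two nonnegative squares, whereas you convert the vanishing of that norm into the a.e.\ pointwise identity $\dot\varphi\,\mathcal{J}_1=\dot\theta\,\varphi\,\partial_\tau\mathcal{J}_1$ and invoke linear independence of $\mathcal{J}_1$ and $\partial_\tau\mathcal{J}_1$; these are two phrasings of the same computation.
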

\begin{proof}
We proceed by contradiction
and assume that $\lambda(\dot\theta,I) = 0$.
Then the minimum~\eqref{lambda} is attained
by a (smooth) function $\psi \in \Dom(Q_\theta^I)$
satisfying (recall~\eqref{Poincare})
\begin{equation}\label{2.ids}
  \|\partial_1 \psi-\dot\theta\,\partial_\tau \psi\|_{\sii(I\times\omega)}^2 = 0
  \quad\ \mbox{and}\ \quad
  \|\nabla'\psi\|_{\sii(I\times\omega)}^2
  - E_1 \|\psi\|_{\sii(I\times\omega)}^2 = 0
  \,.
\end{equation}
Writing
$
  \psi(x) = \varphi(x_1) \mathcal{J}_1(x') + \phi(x)
$,
where~$\mathcal{J}_1$ is the positive eigenfunction
corresponding to~$E_1$ of the Dirichlet Laplacian in $\sii(\omega)$
and $(\mathcal{J}_1,\phi(x_1,\cdot))_{\sii(\omega)}=0$
for every $x_1 \in I$,
we deduce from the second equality in~\eqref{2.ids}
that $\phi = 0$. The first identity is then
equivalent to
\begin{equation*}
   \|\dot\varphi\|_{\sii(I)}^2
   \|\mathcal{J}_1\|_{\sii(\omega)}^2
  + \|\dot\theta \;\! \varphi\|_{\sii(I)}^2
  \|\partial_\tau\mathcal{J}_1\|_{\sii(\omega)}^2
  \\
  - 2 (\mathcal{J}_1,\partial_\tau\mathcal{J}_1)_{\sii(\omega)}
  \Re (\dot\varphi,\dot\theta\;\!\varphi)_{\sii(I)}
  = 0
  \,.
\end{equation*}
Since $(\mathcal{J}_1,\partial_\tau\mathcal{J}_1)_{\sii(\omega)}=0$
by an integration by parts, it follows that~$\varphi$ must be constant
and that
$$
  \|\dot{\theta}\|_{\sii(I)} = 0
  \qquad\mbox{or}\qquad
  \|\partial_\tau\mathcal{J}_1\|_{\sii(\omega)} = 0
  \,.
$$
However, this is impossible under the stated assumptions because
$\|\dot\theta\|_{\sii(I)}$ vanishes if and only if
$\theta$~is constant on~$I$,
and $\partial_\tau\mathcal{J}_1 = 0$ identically in~$\omega$
if and only if $\omega$~is rotationally invariant with respect to the origin.
\end{proof}

Lemma~\ref{Lem.cornerstone} was the cornerstone
of the method of~\cite{K6-erratum} to establish
the existence of Hardy inequalities in twisted tubes
(see also the proof of Theorem~\ref{Thm.Hardy} below).

\subsection{Infinitesimally thin tubes}
%
It is clear that
$
  \lambda(\dot\theta,\Real)
  := \inf\sigma(H_\theta)
  =0
$
whenever~\eqref{spectrum} holds
(\eg, if~\eqref{locally} is satisfied).
It turns out that the shifted spectral threshold diminishes
also in the opposite asymptotic regime,
\ie\ when the interval $I_\epsilon:=(-\epsilon,\epsilon)$ shrinks,
and this irrespectively of the properties of~$\omega$ and $\dot\theta$.
\begin{Proposition}\label{Prop.erratum}
Let $\theta \in C^1(\Real)$.
We have
$$
  \lim_{\epsilon \to 0} \lambda\big(\dot\theta,I_\epsilon\big) = 0
  \,.
$$
\end{Proposition}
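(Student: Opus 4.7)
The plan is to use the variational characterization~\eqref{lambda} with test functions whose Rayleigh quotient tends to zero as~$\eps \to 0$. Since $Q_\theta^{I_\eps}$ depends on~$\theta$ only through~$\dot\theta$, I may replace $\theta$ by $\theta-\theta(0)$ and assume $\theta(0)=0$, so that $|\theta(x_1)| \leq \eps\,\|\dot\theta\|_\infty$ on~$I_\eps$. Let $\mathcal{J}_1 \in H_0^1(\omega)$ denote the first Dirichlet eigenfunction of~$\omega$, normalised by $\|\mathcal{J}_1\|_{\sii(\omega)} = 1$. The naive ansatz $\psi = \mathcal{J}_1$ leads to the quotient $a\,\|\dot\theta\|_{\sii(I_\eps)}^2/(2\eps) \to a\,\dot\theta(0)^2$, which is generically nonzero, so a correction of order~$\theta$ is needed to compensate for the twist.

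I would then use the ansatz
\begin{equation*}
  \psi_\eps(x_1,x') := \mathcal{J}_1(x') + \theta(x_1)\,\chi(x')
  \,,
\end{equation*}
where $\chi \in H_0^1(\omega)\cap\{\mathcal{J}_1\}^\perp$ will eventually be chosen as an $\sii(\omega)$-approximation of~$\partial_\tau\mathcal{J}_1$. Formally, $\chi = \partial_\tau\mathcal{J}_1$ would reduce
\begin{equation*}
  \partial_1 \psi_\eps - \dot\theta\,\partial_\tau\psi_\eps
  = \dot\theta\,[(\chi - \partial_\tau\mathcal{J}_1) - \theta\,\partial_\tau\chi]
\end{equation*}
to size~$O(\eps)$, but $\partial_\tau\mathcal{J}_1$ generically does not lie in $H_0^1(\omega)$. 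Using the orthogonalities $\chi \perp \mathcal{J}_1$, $(\nabla'\mathcal{J}_1,\nabla'\chi)_{\sii(\omega)} = E_1(\mathcal{J}_1,\chi)_{\sii(\omega)} = 0$, and $(\mathcal{J}_1,\partial_\tau\mathcal{J}_1)_{\sii(\omega)} = 0$ (all via integration by parts on~$H_0^1(\omega)$), together with the pointwise bound~\eqref{a-estimate}, a direct computation gives $\|\psi_\eps\|^2_{\sii(I_\eps\times\omega)} \geq 2\eps$ and
\begin{equation*}
  Q_\theta^{I_\eps}[\psi_\eps] - E_1\,\|\psi_\eps\|^2_{\sii(I_\eps\times\omega)}
  \leq C_1\,\eps\,\|\chi - \partial_\tau\mathcal{J}_1\|^2_{\sii(\omega)}
  + C_2\,\eps^3\,\|\chi\|^2_{H^1(\omega)}
  \,,
\end{equation*}
with $C_1,C_2$ depending only on $\|\dot\theta\|_\infty$ and~$a$. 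Inserting into~\eqref{lambda} then yields
\begin{equation*}
  \lambda(\dot\theta,I_\eps)
  \leq C\,\|\chi - \partial_\tau\mathcal{J}_1\|^2_{\sii(\omega)}
  + C\,\eps^2\,\|\chi\|^2_{H^1(\omega)}
  \,.
\end{equation*}

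To conclude, I would use that $\partial_\tau\mathcal{J}_1 \in \sii(\omega)$ (by~\eqref{a-estimate} applied to $\mathcal{J}_1 \in H_0^1(\omega)$) and that $H_0^1(\omega)\cap\{\mathcal{J}_1\}^\perp$ is dense in $\sii(\omega)\cap\{\mathcal{J}_1\}^\perp$. Given any $\delta > 0$, I first fix $\chi = \chi_\delta$ in this dense subspace with $\|\chi_\delta - \partial_\tau\mathcal{J}_1\|^2_{\sii(\omega)} < \delta$, and then choose~$\eps$ so small (depending on the now-fixed $\|\chi_\delta\|_{H^1(\omega)}$) that the second term is also below~$\delta$; this gives $\lambda(\dot\theta,I_\eps) < (C+1)\delta$ for such~$\eps$, and letting $\delta \to 0$ proves the proposition. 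The only real difficulty is that the formally optimal correction $\partial_\tau\mathcal{J}_1$ generically lies outside $H_0^1(\omega)$---precisely the mechanism behind Lemma~\ref{Lem.cornerstone} and the positivity of $\lambda(\dot\theta,I)$ on fixed bounded~$I$---so the $\sii$-approximation error cannot be driven to zero along any $H^1$-bounded sequence, and the argument succeeds only by playing this error off against the $\eps^2$-prefactor of the $H^1$-norm of the correction.
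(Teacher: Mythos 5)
Your proof is correct, and it takes a genuinely different route from the paper. The paper's proof takes an exhaustion sequence $\omega_k\Subset\omega$ and uses as trial function the \emph{exact} first Dirichlet eigenfunction $\mathcal{J}_1^k$ of $\omega_k$, transported by the rigid rotation $\mathcal{L}_{\theta_0}$ with $\theta_0(x_1)=\dot\theta(0)x_1$; this makes $\partial_1\psi^k-\dot\theta\,\partial_\tau\psi^k=(\dot\theta_0-\dot\theta)\partial_\tau\psi^k$ small on $I_\epsilon$ by continuity of $\dot\theta$, and the transverse defect is exactly $(E_1^k-E_1)|I_\epsilon|\|\mathcal{J}_1^k\|^2$, which is handled by eigenvalue continuity under exhaustion. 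Your proof instead \emph{linearizes} the rotated eigenfunction $\mathcal{J}_1\circ(\mathcal{L}_{\theta})'$ in $\theta$, taking $\psi_\eps=\mathcal{J}_1+\theta\,\chi$ with $\chi\in H_0^1(\omega)\cap\{\mathcal{J}_1\}^\perp$ an $\sii(\omega)$-approximation of the formal corrector $\partial_\tau\mathcal{J}_1$. Both proofs are forced to work around the same obstruction, namely that $\partial_\tau\mathcal{J}_1\notin H_0^1(\omega)$ generically --- the very fact that makes $\lambda(\dot\theta,I)>0$ for fixed $I$. The paper dodges it geometrically by using a compactly supported eigenfunction that can be rotated without leaving $\omega$; you dodge it analytically by allowing an arbitrarily small $\sii$-approximation error and trading it against the $\eps^2$-prefactor multiplying the (possibly large) $H^1$-norm of the approximant. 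Your argument avoids any appeal to eigenvalue continuity under domain exhaustion (the paper's citation of Daners), at the cost of requiring the orthogonality identities $(\mathcal{J}_1,\partial_\tau\mathcal{J}_1)_{\sii(\omega)}=0$ and $(\nabla'\mathcal{J}_1,\nabla'\chi)_{\sii(\omega)}=E_1(\mathcal{J}_1,\chi)_{\sii(\omega)}=0$, all of which are legitimate integrations by parts for $\mathcal{J}_1,\chi\in H_0^1(\omega)$. One small slip: the Rayleigh quotient of the naive trial $\psi=\mathcal{J}_1$ tends to $\dot\theta(0)^2\,\|\partial_\tau\mathcal{J}_1\|_{\sii(\omega)}^2$, not $a\,\dot\theta(0)^2$; your conclusion that it is generically nonzero is unaffected. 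You should also note (as the paper does implicitly) that $\psi_\eps$ is admissible because it is the restriction to $I_\eps\times\omega$ of $\zeta\otimes\mathcal{J}_1+(\zeta\theta)\otimes\chi\in H_0^1(\Omega_0)$, with $\zeta\in C_0^\infty(\Real)$ a cutoff equal to $1$ on $I_\eps$.
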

\begin{proof}
Let $\{\omega_k\}_{k=0}^\infty$ be an exhaustion sequence of~$\omega$,
\ie, each~$\omega_k$ is an open set with smooth boundaries
satisfying $\omega_k \Subset \omega_{k+1}$
and $\cup_{k=0}^\infty \omega_k = \omega$.
Let $\mathcal{J}_1^k$ denote the first eigenfunction
of the Dirichlet Laplacian in $\sii(\omega_k)$;
we extend it by zero to the whole~$\Real^2$.
Finally, set
$
  \psi^k := (1\otimes\mathcal{J}_1^k) \circ \mathcal{L}_{\theta_0}
$
with $\theta_0(x_1):=\dot\theta(0)\,x_1$, \ie,
$$
  \psi^k(x) = \mathcal{J}_1^k
  \left(
  x_2\cos(\dot\theta_0 x_1)+x_3\sin(\dot\theta_0 x_1),
  -x_2\sin(\dot\theta_0 x_1)+x_3\cos(\dot\theta_0 x_1)
  \right)
  \,,
$$
where $\dot\theta_0 := \dot\theta(0)$.

For any (large) $k \in \Nat$ there exists (small) positive~$\epsilon_k$
such that $\psi^k$ belongs to $\Dom(Q_\theta^{I_\epsilon})$
for all $\epsilon \leq \epsilon_k$.
Hence it is an admissible trial function for~\eqref{lambda}.
Now, fix $k \in \Nat$ and assume that $\epsilon \leq \epsilon_k$.
Then we have
\begin{equation*}
  \|\psi^k\|_{\sii(I_\epsilon\times\omega)}^2
  = |I_\epsilon| \, \|\mathcal{J}_1^k\|_{\sii(\omega_k)}^2
  \,,
\end{equation*}
where we have used the change of variables
$
  y = \mathcal{L}_{\theta_0}(x)
$.
At the same time, employing consecutively the identity
$
  \partial_1 \psi^k -\dot\theta\,\partial_\tau \psi^k
  = (\dot{\theta}_0-\dot\theta) \, \partial_\tau\psi^k
$,
the bound~\eqref{a-estimate}, the identity
$
  |\nabla'\psi^k(x)|
  = |\nabla \mathcal{J}_1^k(y_2,y_3)|
$
and the same change of variables,
we get the estimate
\begin{equation*}
  \|
  \partial_1 \psi^k -\dot\theta\,\partial_\tau \psi^k
  \|_{\sii(I_\epsilon\times\omega)}^2
  \leq
  \|
  (\dot\theta_0-\dot\theta)
  \|_{L^\infty(I_\epsilon)}^2
  \, |I_\epsilon| \, a^2 \,
  \|
  \nabla\mathcal{J}_1^k
  \|_{\sii(\omega_k)}^2
  \,,
\end{equation*}
where the supremum norm clearly tends to zero as $\epsilon \to 0$.
Finally,
$$
  \|\nabla'\psi^k\|_{\sii(I_\epsilon\times\omega)}^2
  - E_1 \|\psi^k\|_{\sii(I_\epsilon\times\omega)}^2
  = (E_1^k - E_1) \, |I_\epsilon| \, \|\mathcal{J}_1^k\|_{\sii(\omega_k)}^2
  \,,
$$
where~$E_1^k$ denotes the first eigenvalue
of the Dirichlet Laplacian in $\sii(\omega_k)$.
Sending~$\epsilon$ to zero, the trial-function argument therefore yields
$$
  \lim_{\epsilon \to 0}
  \lambda(\dot\theta,I_\epsilon) \leq E_1^k - E_1
  \,.
$$
Since~$k$ can be made arbitrarily large
and $E_1^k \to E_1$ as $k\to\infty$
by standard approximation arguments (see, \eg, \cite{Daners}),
we conclude with the desired result.
\end{proof}

\begin{Remark}[An erratum to \cite{K6}]
The study of the infinitesimally thin tubes
played a crucial in the proof of
Hardy inequalities given in~\cite{K6}.
According to Lemma~6.3 in~\cite{K6},
$\lambda\big(\dot\theta,I_\epsilon\big)$,
with constant~$\dot\theta$,
is independent of~$\epsilon>0$
(and therefore remains positive
for a twisted tube even if $\epsilon \to 0$).
However, in view Proposition~\ref{Prop.erratum}, this is false.
Consequently, Lemmata~6.3 and~6.5 and Theorem~6.6 in~\cite{K6} cannot hold.
The proof of Hardy inequalities presented in~\cite{K6} is incorrect.
A corrected version of the paper~\cite{K6} can be found in~\cite{K6-erratum}.
\end{Remark}
%

\subsection{The Hardy inequality}
%
Now we come back to unbounded tubes~$\Omega_\theta$.
Although~\eqref{Poincare.tube} represents a sharp
Poincar\'e-type inequality both for twisted and untwisted tubes
(if~\eqref{spectrum} holds),
there is a fine difference in the spectral setting.
Whenever the tube~$\Omega_\theta$ is non-trivially twisted
(\cf~Definition~\ref{definition}),
there exists a positive function~$\varrho$
(necessarily vanishing at infinity if~\eqref{spectrum} holds)
such that~\eqref{Poincare.tube} is improved to~\eqref{I.Hardy}.
A variant of the Hardy inequality
is represented by the following theorem:
\begin{Theorem}\label{Thm.Hardy}
Let $\theta \in C^1(\Real)$ and
suppose that $\dot\theta$ has compact support.
Then for every $\Psi \in H_0^1(\Omega_\theta)$ we have
\begin{equation}\label{Hardy}
  \|\nabla \Psi\|_{\sii(\Omega_\theta)}^2
  - E_1 \;\! \|\Psi\|_{\sii(\Omega_\theta)}^2
  \ \geq\
  c_H \, \|\rho \Psi\|_{\sii(\Omega_\theta)}^2
  \,,
\end{equation}
where $\rho(x):=1/\sqrt{1+x_1^2}$
and~$c_H$ is a non-negative constant
depending on~$\dot\theta$ and~$\omega$.
Moreover, $c_H$~is positive if, and only if,
$\Omega_\theta$ is twisted.
\end{Theorem}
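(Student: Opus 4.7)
The plan is to transfer~\eqref{Hardy} to the straight tube via the unitary~\eqref{unitary}: since $\rho$ depends only on $x_1$ and $\mathcal{L}_\theta$ preserves~$x_1$, the inequality is equivalent to
\[
  Q_\theta[\psi] - E_1\,\|\psi\|_{\sii(\Omega_0)}^2
  \,\geq\, c_H\,\|\rho\,\psi\|_{\sii(\Omega_0)}^2,
  \qquad \psi \in H_0^1(\Omega_0).
\]
For the ``only if'' direction, if $\Omega_\theta$ is untwisted then by Definition~\ref{definition} either $\dot\theta\equiv 0$ or $\omega$ is rotationally symmetric, so in either case the product $\dot\theta(x_1)\,\partial_\tau\mathcal{J}_1(x')$ vanishes identically on $\Omega_0$. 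Taking a cutoff $\eta \in C_c^\infty(\Real)$ with $\eta \equiv 1$ on $[-1,1]$ and $\psi_n(x) := \eta(x_1/n)\,\mathcal{J}_1(x')$, one computes $(\partial_1 - \dot\theta\partial_\tau)\psi_n = n^{-1}\eta'(x_1/n)\,\mathcal{J}_1$, hence $Q_\theta[\psi_n] - E_1\,\|\psi_n\|_{\sii(\Omega_0)}^2 = n^{-1}\|\eta'\|_{\sii(\Real)}^2\,\|\mathcal{J}_1\|_{\sii(\omega)}^2 \to 0$, while $\|\rho\,\psi_n\|_{\sii(\Omega_0)}^2 \to \pi\,\|\mathcal{J}_1\|_{\sii(\omega)}^2 > 0$, forcing $c_H = 0$.

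For the ``if'' direction assume $\Omega_\theta$ is twisted and fix a bounded open interval $I \subset \Real$ containing $\supp\dot\theta$ on which $\theta$ is nonconstant. Lemma~\ref{Lem.cornerstone} yields $\lambda := \lambda(\dot\theta, I) > 0$ and hence, via~\eqref{lambda}, the local inequality
$Q_\theta^I[\psi\!\upharpoonright\!I\times\omega] - E_1\,\|\psi\|_{\sii(I\times\omega)}^2 \geq \lambda\,\|\psi\|_{\sii(I\times\omega)}^2$ for all $\psi \in H_0^1(\Omega_0)$. For $\epsilon \in (0,1)$, split
$Q_\theta[\psi] - E_1\|\psi\|_{\sii(\Omega_0)}^2 = \epsilon(Q_\theta[\psi] - E_1\|\psi\|_{\sii(\Omega_0)}^2) + (1-\epsilon)(Q_\theta[\psi] - E_1\|\psi\|_{\sii(\Omega_0)}^2)$;
discarding the non-negative transverse Poincar\'e contribution (\cf~\eqref{Poincare.tube}) in the first summand, restricting the second to $I\times\omega$ via the local inequality, and using $\dot\theta \equiv 0$ outside $I$, one arrives at
\[
  Q_\theta[\psi] - E_1\,\|\psi\|_{\sii(\Omega_0)}^2
  \,\geq\, \epsilon\!\int_{(\Real\setminus I)\times\omega}\!|\partial_1\psi|^2\,dx
  + (1-\epsilon)\lambda\,\|\psi\|_{\sii(I\times\omega)}^2.
\]

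The problem is thereby reduced to a one-dimensional weighted Hardy inequality: for every $\mu > 0$ and every bounded open $I \subset \Real$ there exists $c(\mu, I) > 0$ such that
\[
  \int_{\Real\setminus I}\!|u'|^2\,dx + \mu\int_I|u|^2\,dx
  \,\geq\, c(\mu, I) \int_\Real \frac{|u|^2}{1+x^2}\,dx
  \qquad \forall u \in H^1(\Real).
\]
Applied with $\mu := (1-\epsilon)\lambda/\epsilon$ to $u(\cdot) := \psi(\cdot, x')$ for a.e.\ $x' \in \omega$ and integrated over $\omega$ via Fubini, this yields~\eqref{Hardy} with $c_H := \epsilon\,c(\mu, I)$. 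The 1D inequality itself can be established by a compactness/contradiction argument---any minimising sequence $u_n$ with unit weighted $\sii$-norm on the right and vanishing left-hand side would, after passage to a subsequence, converge to a constant (by $u_n' \to 0$ in $\sii$) that must vanish by virtue of the bounded-interval penalty, contradicting the normalisation---or by an Allegretto--Piepenbrink type construction of a positive supersolution $\phi$ of $-\phi'' + \mu\chi_I\phi = c\,\phi/(1+x^2)$ together with the factorisation identity. The main obstacle is precisely this 1D inequality: in the absence of a genuine Dirichlet condition, translation invariance on $\Real$ must be broken by the mild $\sii$-penalty on the bounded interval $I$ alone, so extracting the inverse-quadratic weight $1/(1+x^2)$ at infinity from this term requires care. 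All other ingredients---the untwisting, Lemma~\ref{Lem.cornerstone}, the convex-combination splitting, and Fubini---are essentially routine once the 1D inequality is in hand.
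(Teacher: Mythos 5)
Your ``only if'' direction is fine, and the strategy of combining Lemma~\ref{Lem.cornerstone} with a convex split is close in spirit to the paper's proof. The gap is in the ``if'' direction: the one-dimensional inequality you reduce to is \emph{false}. Concretely, take $I=(-1,1)$ and define $u_n\in H^1(\Real)$ by $u_n(x)=0$ for $x\le 1-1/n$, $u_n(x)=n(x-1+1/n)$ for $1-1/n\le x\le 1$, $u_n(x)=1-(x-1)/(n-1)$ for $1\le x\le n$, and $u_n(x)=0$ for $x\ge n$. Then
\begin{equation*}
  \int_{\Real\setminus I}|u_n'|^2\,dx=\frac{1}{n-1}\to 0,\qquad
  \int_I|u_n|^2\,dx=\frac{1}{3n}\to 0,
\end{equation*}
so the left-hand side of your claimed inequality tends to zero, while
\begin{equation*}
  \int_\Real\frac{|u_n|^2}{1+x^2}\,dx
  \ \ge\ \int_1^2\frac{|u_n(x)|^2}{1+x^2}\,dx\ \ge\ \frac{1}{20}
  \qquad(n\ge 3),
\end{equation*}
so no constant $c(\mu,I)>0$ can exist. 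The compactness sketch you offer breaks at exactly the point I would expect: from the left-hand side you only get $u_n'\to 0$ in $\sii(\Real\setminus I)$, not in $\sii(\Real)$; my $u_n$ has $u_n'=n$ on a shrinking interval inside $I$, which is invisible to your form. The culprit in your chain of estimates is the step ``discarding \dots restricting \dots'', where you replace $\|\partial_1\psi-\dot\theta\,\partial_\tau\psi\|_{\sii(\Omega_0)}^2$ by $\int_{(\Real\setminus I)\times\omega}|\partial_1\psi|^2$: you throw away the longitudinal derivative on $I\times\omega$, which is precisely what is needed to link the penalty region to infinity.

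The repair, and what the paper actually does, is not to restrict the domain but to peel off the twisting term by a Cauchy--Young inequality (step~\eqref{bound3} of the paper): for small $\epsilon$,
\begin{equation*}
  Q_\theta[\psi]-E_1\|\psi\|_{\sii(\Omega_0)}^2
  \ \ge\
  \epsilon\,\|\partial_1\psi\|_{\sii(\Omega_0)}^2
  - \frac{\epsilon}{1-\epsilon}\,\|\dot\theta\|_{L^\infty(I)}^2\,a^2 E_1\,
  \|\psi\|_{\sii(I\times\omega)}^2,
\end{equation*}
using~\eqref{a-estimate} and the transverse Poincar\'e inequality~\eqref{Poincare} to absorb the cross term. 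This keeps the \emph{full} longitudinal derivative over $\Omega_0$, at the price of a negative multiple of $\|\psi\|_{\sii(I\times\omega)}^2$ which Lemma~\ref{Lem.cornerstone} (your $(1-\epsilon)$-part) can swallow for $\epsilon$ small. With the full $\|\partial_1\psi\|_{\sii(\Omega_0)}^2$ in hand, the one-dimensional input that is actually needed and true is the cutoff Hardy inequality
\begin{equation*}
  \|\rho\psi\|_{\sii(\Omega_0)}^2
  \ \le\ 16\,\|\partial_1\psi\|_{\sii(\Omega_0)}^2
  + \Bigl(2+\frac{64}{|I|^2}\Bigr)\,\|\psi\|_{\sii(I\times\omega)}^2,
\end{equation*}
obtained by splitting $\psi=\eta\psi+(1-\eta)\psi$ with a Lipschitz cutoff $\eta$ vanishing at the origin and applying the classical $\int x^{-2}|\varphi|^2\le 4\int|\dot\varphi|^2$ on $\Real\setminus\{0\}$. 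Everything else in your outline (untwisting, Lemma~\ref{Lem.cornerstone}, Fubini, and the final centering of $I$ at $x_1^0$) goes through unchanged.
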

\begin{proof}
It is clear that the left hand side of~\eqref{Hardy}
is non-negative due to~\eqref{Poincare.tube}.
The fact that $c_H=0$ if the tube is untwisted
follows from the more general result included in
Proposition~\ref{Prop.difference}.2 below.
We divide the proof of the converse fact
(\ie\ that twisting implies $c_H>0$)
into several steps.
Recall the identification of $\Psi\in\sii(\Omega_\theta)$
with $\psi:=U_\theta\Psi\in\sii(\Omega_0)$ via~\eqref{unitary}.

\smallskip
\noindent
\emph{1.}~Let us first assume that the interval
$I := (\inf\supp\dot\theta,\sup\supp\dot\theta)$
is symmetric with respect to the origin of~$\Real$.

\smallskip
\noindent
\emph{2.}~The main ingredient in the proof is the following
Hardy-type inequality for a Schr\"odinger operator
in $\Real\times\omega$
with a characteristic-function potential:
\begin{equation}\label{Hardy.classical}
  \|\rho\psi\|_{\sii(\Omega_0)}^2
  \leq 16 \, \|\partial_1\psi\|_{\sii(\Omega_0)}^2
  + (2+64/|I|^2) \, \|\psi\|_{\sii(I\times\omega)}^2
\end{equation}
for every $\psi \in H_0^1(\Omega_0)$.
This inequality is a consequence of the classical
one-dimensional Hardy inequality
$
  \int_\Real x_1^{-2} |\varphi(x_1)|^2 dx_1
  \leq 4 \int_\Real |\dot\varphi(x_1)|^2 dx_1
$
valid for any $\varphi\in H_0^1(\Real\!\setminus\!\{0\})$.
Indeed, following~\cite[Sec.~3.3]{EKK},
let~$\eta$ be the Lipschitz function on~$\Real$ defined by
$\eta(x_1):=2|x_1|/|I|$ for $|x_1|\leq |I|/2$ and~$1$ otherwise in~$\Real$
(we shall denote by the symbol the function $\eta \otimes 1$ on $\Real\times\omega$).
For any $\psi \in C_0^\infty(\Omega_0)$,
let us write $\psi =\eta\psi+(1-\eta)\psi$,
so that $(\eta\psi)(\cdot,x') \in H_0^1(\Real\!\setminus\!\{0\})$
for every $x'\in\omega$.
Then, employing Fubini's theorem, we can estimate as follows:
\begin{align*}
  \|\rho\psi\|_{\sii(\Omega_0)}^2
  & \leq 2 \int_{\Omega_0} x_1^{-2} \, |(\eta\psi)(x)|^2 \, dx
  + 2 \, \|(1-\eta)\psi\|_{\sii(\Omega_0)}^2
  \\
  & \leq 8 \, \|\partial_1(\eta\psi)\|_{\sii(\Omega_0)}^2
  + 2 \, \|\psi\|_{\sii(I\times\omega)}^2
  \\
  & \leq 16 \, \|\eta \partial_1\psi\|_{\sii(\Omega_0)}^2
  + 16 \, \|(\partial_1{\eta})\psi\|_{\sii(\Omega_0)}^2
  + 2 \, \|\psi\|_{\sii(I\times\omega)}^2
  \\
  & \leq 16 \, \|\partial_1\psi\|_{\sii(\Omega_0)}^2
  + (2+64/|I|^2) \, \|\psi\|_{\sii(I\times\omega)}^2
  \,.
\end{align*}
By density, this result extends to all
$\psi\in H_0^1(\Omega_0)=\Dom(Q_{\theta})$.

\smallskip
\noindent
\emph{3.}~By Lemma~\ref{Lem.cornerstone},
we have
\begin{equation}\label{bound1}
  Q_{\theta}[\psi] - E_1 \;\! \|\psi\|_{\sii(\Omega_0)}^2
  \, \geq \,
  Q_{\theta}^{I}[\psi] - E_1 \;\! \|\psi\|_{\sii(I\times\omega)}^2
  \, \geq \,
  \lambda(\dot\theta,I) \, \|\psi\|_{\sii(I\times\omega)}^2
\end{equation}
for every $\psi \in \Dom(Q_{\theta})$.
Here the first inequality employs the trivial fact that
the restriction to $I\times\omega$ of a function from
$\Dom(Q_{\theta})$ belongs to $\Dom(Q_{\theta}^I)$.
Under the stated hypotheses,
we know from Lemma~\ref{Lem.cornerstone} that
$\lambda(\dot\theta,I)$ is a positive number.

\smallskip
\noindent
\emph{4.}~At the same time, for every $\psi \in \Dom(Q_{\theta})$,
\begin{eqnarray}\label{bound3}
\lefteqn{
   Q_{\theta}[\psi] - E_1 \;\! \|\psi\|_{\sii(\Omega_0)}^2
   }
   \nonumber \\
   && \geq
   \epsilon \, \|\partial_1 \psi\|_{\sii(\Omega_0)}^2
   + \int_{\Omega_0} \left\{
   \left[1-\frac{\epsilon}{1-\epsilon} \, a^2 \, \dot{\theta}^2(x_1)\right]
   |\nabla'\psi(x)|^2 - E_1 \, |\psi(x)|^2 \right\} dx
   \nonumber \\
   && \geq
   \epsilon \, \|\partial_1 \psi\|_{\sii(\Omega_0)}^2
   - \frac{\epsilon}{1-\epsilon} \, a^2 E_1 \,
   \|\dot\theta \psi\|_{\sii(\Omega_0)}^2
   \nonumber \\
   && \geq
   \epsilon \, \|\partial_1 \psi\|_{\sii(\Omega_0)}^2
   - \frac{\epsilon}{1-\epsilon} \, \|\dot\theta\|_{L^\infty(I)}^2 \, a^2 E_1 \,
   \|\psi\|_{\sii(I\times\omega)}^2
\end{eqnarray}
for sufficiently small positive~$\epsilon$.
Here the first estimate is an elementary Cauchy-type inequality
employing~\eqref{a-estimate} and valid for all $\epsilon\in(0,1)$.
The second inequality in~\eqref{bound3} follows from~\eqref{Poincare}
with help of Fubini's theorem
provided that~$\epsilon$ is sufficiently small,
namely if $\epsilon < \big(1+a^2\|\dot{\theta}\|_{L^\infty(\Real)}^2\big)^{-1}$.

\smallskip
\noindent
\emph{5.}~Interpolating between the bounds~\eqref{bound1} and~\eqref{bound3},
and using~\eqref{Hardy.classical} in the latter,
we finally arrive at
\begin{multline*}
   Q_{\theta}[\psi] - E_1 \;\! \|\psi\|_{\sii(\Omega_0)}^2
   \geq
   \frac{1}{2}
   \frac{\epsilon}{16} \
   \|\rho\psi\|_{\sii(\Omega_0)}^2
   \\
   +
   \frac{1}{2}
   \left[
   \lambda(\dot\theta,I)
   - \epsilon \, \left(\frac{1}{8}+\frac{4}{|I|^2}\right)
   - \frac{\epsilon}{1-\epsilon} \, \|\dot\theta\|_{L^\infty(I)}^2 \, a^2 E_1
   \right]
   \|\psi\|_{\sii(I\times\omega)}^2
\end{multline*}
for every $\psi \in \Dom(Q_{\theta})$.
It is clear that the last line on the right hand side of this inequality
can be made non-negative by choosing~$\epsilon$ sufficiently small.
Such an~$\epsilon$ then determines the Hardy
constant $c_H' := \epsilon/32$.

\smallskip
\noindent
\emph{6.}~The previous bound can be transferred to $\sii(\Omega_\theta)$
via~\eqref{unitary}.
In general, if the centre of~$I$ is an arbitrary point $x_1^0\in\Real$,
the obtained result is equivalent to
$$
  \forall \Psi\in\Dom(Q_D^{\Omega_\theta})
  \,, \qquad
  \|\nabla \Psi\|_{\sii(\Omega_\theta)}^2
  - E_1 \;\! \|\Psi\|_{\sii(\Omega_\theta)}^2
  \ \geq\
  c_H' \, \|\rho_{x_1^0} \;\! \Psi\|_{\sii(\Omega_\theta)}^2
  \,,
$$
where $\rho_{x_1^0}(x):=1/\sqrt{1+(x_1-x_1^0)^2}$.
This yields~\eqref{Hardy} with
$$
  c_H := c_H' \min_{x_1 \in \Real} \frac{1+x_1^2}{1+(x_1-x_1^0)^2}
  \,,
$$
where the minimum is a positive constant depending on~$x_1^0$.
\end{proof}

The Hardy inequality of Theorem~\ref{Thm.Hardy}
was first established~\cite{EKK} under additional hypotheses.
The present version is adopted from~\cite{K6-erratum},
where other variants of the inequality can be found, too.

\subsection{The spectral stability}
%
Theorem~\ref{Thm.Hardy} provides certain stability properties
of the spectrum for twisted tubes,
while the untwisted case is always unstable,
in the following sense:
\begin{Proposition}\label{Prop.difference}
Let $V$ be the multiplication operator in $\sii(\Omega_\theta)$ by
a bounded non-zero non-negative function~$v$
such that $v(x) \sim |x_1|^{-2}$ as $|x_1|\to\infty$.
Then
\begin{enumerate}
\item
if~$\Omega_\theta$ is twisted with $\theta \in C^1(\Real)$
and $\dot\theta$ has compact support,
then there exists $\varepsilon_0>0$
such that for all $\varepsilon<\varepsilon_0$,
$$
  \inf\sigma(-\Delta_D^{\Omega_\theta}-\varepsilon V)
  \geq E_1 \,;
$$
\item
if~$\Omega_\theta$ is untwisted then, for all $\varepsilon>0$,
$$
  \inf\sigma(-\Delta_D^{\Omega_\theta}-\varepsilon V)
  < E_1 \,.
$$
\end{enumerate}
\end{Proposition}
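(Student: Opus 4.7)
The plan is to treat the two parts separately: part~1 is a direct consequence of the Hardy inequality (Theorem~\ref{Thm.Hardy}), while part~2 is a standard separation-of-variables trial-function argument on the straight tube.

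For part~1, I would first observe that the boundedness of $v$ together with the tail assumption $v(x)\sim|x_1|^{-2}$ supplies a constant $M>0$ with $v(x)\le M\,\rho(x)^2$ pointwise on $\Omega_\theta$, where $\rho(x)=1/\sqrt{1+x_1^2}$ is the Hardy weight of Theorem~\ref{Thm.Hardy}: on $\{|x_1|\le R\}$ one uses $\|v\|_\infty$ together with the positive lower bound on $\rho^2$ there, while on the complement the $|x_1|^{-2}$ tail of $v$ is comparable to $\rho^2$ up to a constant. Theorem~\ref{Thm.Hardy} then yields, in the form sense on $\sii(\Omega_\theta)$,
\[
  -\Delta_D^{\Omega_\theta}-E_1-\varepsilon V \;\geq\; (c_H-\varepsilon M)\,\rho^2,
\]
which is non-negative for all $\varepsilon\le c_H/M$. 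Since $\Omega_\theta$ is twisted, $c_H>0$ by Theorem~\ref{Thm.Hardy}, and the choice $\varepsilon_0:=c_H/M$ gives the claim.

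For part~2, by the isometric identification of untwisted tubes with $\Omega_0=\Real\times\omega$ mentioned after Definition~\ref{definition}, it is enough to exhibit a single $\Psi\in H_0^1(\Omega_0)$ satisfying
\[
  \|\nabla\Psi\|_{\sii(\Omega_0)}^2-E_1\,\|\Psi\|_{\sii(\Omega_0)}^2-\varepsilon\!\int_{\Omega_0}v\,|\Psi|^2<0.
\]
I would take the product ansatz $\Psi(x_1,x'):=\varphi(x_1)\,\mathcal{J}_1(x')$ with $\mathcal{J}_1$ the normalized first Dirichlet eigenfunction of $-\Delta_D^\omega$. Using $\|\mathcal{J}_1\|_{\sii(\omega)}=1$ and $\|\nabla'\mathcal{J}_1\|_{\sii(\omega)}^2=E_1$, the $E_1$-shift is exactly cancelled by the transverse contribution and, via Fubini, the displayed quantity reduces to
\[
  \|\dot\varphi\|_{\sii(\Real)}^2-\varepsilon\!\int_{\Real} W\,\varphi^2,\qquad W(x_1):=\int_\omega v(x_1,x')\,\mathcal{J}_1(x')^2\,dx'.
\]
Since $v\ge 0$ is non-zero and $\mathcal{J}_1$ is strictly positive inside the connected set $\omega$, $W$ is non-negative and non-trivial. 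Plugging in the classical plateau test function $\varphi_L(x_1):=\min\!\bigl(1,(2L-|x_1|)_+/L\bigr)$ gives $\|\dot\varphi_L\|_{\sii(\Real)}^2=2/L$ while, by monotone convergence on the expanding plateau $[-L,L]$, the quantity $\int_\Real W\,\varphi_L^2$ stays bounded below by a fixed positive constant once $L$ is large. For any $\varepsilon>0$, taking $L$ sufficiently large makes the displayed quantity strictly negative, which proves part~2.

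No step represents a serious obstacle. The one place calling for mild care is the pointwise majoration $v\le M\rho^2$ in part~1, which is a routine splitting argument at some $|x_1|=R$ patching the interior $L^\infty$ bound to the tail. In part~2 the only verification is that $\varphi_L\otimes\mathcal{J}_1\in H_0^1(\Omega_0)$, which is immediate from $\varphi_L\in H^1(\Real)$ (compactly supported) and $\mathcal{J}_1\in H_0^1(\omega)$; the trial-function reduction and the one-dimensional plateau argument are classical.
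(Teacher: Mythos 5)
Your proposal is correct and follows essentially the same route as the paper. Part~1 is deduced exactly as in the paper (the Hardy inequality of Theorem~\ref{Thm.Hardy} combined with the pointwise domination $v\le M\rho^2$); for part~2 the paper likewise reduces to $\theta=0$, takes the separated trial function $\psi_n=\varphi_n\otimes\mathcal{J}_1$ and drives $\|\dot\varphi_n\|_{\sii(\Real)}^2\to 0$ while the potential integral stays bounded away from zero --- it simply uses Gaussians $\varphi_n(x_1)=e^{-x_1^2/n}$ (with $\|\dot\varphi_n\|\sim n^{-1/4}$ and dominated convergence) where you use a linear plateau $\varphi_L$ (with $\|\dot\varphi_L\|^2=2/L$ and monotone convergence), an inessential difference.
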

\begin{proof}
The first statement follows readily from one part of Theorem~\ref{Thm.Hardy}.
To prove the second property
(and therefore the other part of Theorem~\ref{Thm.Hardy}
stating that $c_H=0$ if the tube is untwisted),
it is enough to consider the case $\theta=0$
and construct a test function~$\psi$ from $H_0^1(\Omega_0)$ such that
\begin{equation*}
  P_0[\psi] :=
  \|\nabla \psi\|_{\sii(\Omega_0)}^2 - E_1 \|\psi\|_{\sii(\Omega_0)}^2
  - \varepsilon \, \big\|v^{1/2}\psi\big\|_{\sii(\Omega_0)}^2
  < 0
\end{equation*}
for all positive~$\varepsilon$.
For every $n\geq 1$, we define
\begin{equation}\label{gef}
  \psi_n(x) := \varphi_n(x_1) \mathcal{J}_1(x_2,x_3)
  \,,
\end{equation}
where~$\mathcal{J}_1$ is the positive eigenfunction
corresponding to~$E_1$
of the Dirichlet Laplacian in the cross-section~$\omega$,
normalized to~$1$ in $\sii(\omega)$,
and
\begin{equation}\label{mollifier}
  \varphi_n(x_1) :=
  \exp{\left(-\frac{x_1^2}{n}\right)}
  \,.
\end{equation}
In view of the separation of variables
and the normalization of~$\mathcal{J}_1$,
we have
$$
  P_0[\psi_n] = \|\dot\varphi_n\|_{\sii(\Real)}^2
  - \varepsilon \, \big\|v_1^{1/2}\varphi_n\big\|_{\sii(\Real)}^2
  \,,
$$
where $v_1(x_1) := \| v(x_1,\cdot)^{1/2} \mathcal{J}_1\|_{\sii(\omega)}^2$.
By hypothesis, $v_1 \in L^1(\Real)$
and the integral $\|v_1\|_{L^1(\Real)}$ is positive.
Finally, an explicit calculation yields
$\|\dot\varphi_n\|_{\sii(\Real)} \sim n^{-1/4}$.
By the dominated convergence theorem, we therefore have
$$
  P_0[\psi_n] \xrightarrow[n\to\infty]{} - \varepsilon \, \|v_1\|_{L^1(\Real)}
  \,.
$$
Consequently, taking~$n$ sufficiently large and $\varepsilon$~positive,
we can make the form $P_0[\psi_n]$ negative.
\end{proof}

Since the potential~$V$ is bounded and vanishes at infinity,
it is easy to see that the essential spectrum is not changed,
\ie,
$
  \sigma_\mathrm{ess}(-\Delta_D^{\Omega_\theta}-\varepsilon V)
  = [E_1,\infty)
$,
independently of the value of~$\eps$
and irrespectively of whether the tube is twisted or not.
As a consequence of Proposition~\ref{Prop.difference},
we have that an arbitrarily small attractive potential $-\varepsilon V$
added to the shifted operator $-\Delta_D^{\Omega_\theta}-E_1$
in the untwisted tube would generate negative discrete eigenvalues,
however, a certain critical value of~$\varepsilon$ is needed
in order to generate the negative spectrum in the twisted case.
In the language of~\cite{Pinchover_2007},
the operator $-\Delta_D^{\Omega_\theta}-E_1$
is therefore subcritical (respectively critical)
if~$\Omega_\theta$ is twisted (respectively untwisted).

\subsection{An upper bound to the Hardy constant}
%
Now we come back to Theorem~\ref{Thm.Hardy} and show that
the Hardy weight at the right hand side of~\eqref{Hardy}
cannot be made arbitrarily large by increasing~$\dot\theta$
or making the cross-section~$\omega$ more eccentric.
\begin{Proposition}\label{Prop.limit}
Let $\theta \in C^1(\Real)$ and
suppose that $\dot\theta$ has compact support.
Then
$$
  c_H \leq 1/2 \,,
$$
where~$c_H$ is the constant of Theorem~\ref{Thm.Hardy}.
\end{Proposition}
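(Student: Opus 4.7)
The strategy is the standard variational one: by testing~\eqref{Hardy}
against a cleverly chosen sequence $\psi_n \in H_0^1(\Omega_\theta)$,
one obtains the upper bound
$c_H \leq (\|\nabla\psi_n\|^2 - E_1\|\psi_n\|^2)/\|\rho\psi_n\|^2$
for every~$n$, and it suffices to drive this quotient
below~$1/2$ asymptotically. I pass to~$\Omega_0$ via~$U_\theta$
of~\eqref{unitary} and work with~$Q_\theta$ throughout.

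I take the product ansatz
$\psi_n(x) := \varphi_n(x_1)\,\mathcal{J}_1(x')$,
where $\mathcal{J}_1$ is the first Dirichlet eigenfunction of~$-\Delta'$
on~$\omega$, normalised in~$\sii(\omega)$, and
$\varphi_n \in H^1(\Real)$ is to be chosen.
Expanding $Q_\theta[\psi_n]$, using
$-\Delta'\mathcal{J}_1 = E_1 \mathcal{J}_1$ and the orthogonality
$(\mathcal{J}_1,\partial_\tau\mathcal{J}_1)_{\sii(\omega)} = 0$
(the same integration by parts as in the proof of Lemma~\ref{Lem.cornerstone}),
one gets
\begin{equation*}
  Q_\theta[\psi_n] - E_1\,\|\psi_n\|^2_{\sii(\Omega_0)}
  \, = \, \|\dot\varphi_n\|^2_{\sii(\Real)}
  + \|\partial_\tau\mathcal{J}_1\|^2_{\sii(\omega)}\,
  \|\dot\theta\,\varphi_n\|^2_{\sii(\Real)} \,,
\end{equation*}
while $\|\rho\psi_n\|^2_{\sii(\Omega_\theta)} = \|\rho\varphi_n\|^2_{\sii(\Real)}$
(the weight depends only on~$x_1$, which is preserved by~$\mathcal{L}_\theta$).
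Imposing in addition that $\varphi_n \equiv 0$ on $\supp\dot\theta$
kills the cross term and reduces matters to the one-dimensional
ratio $\|\dot\varphi_n\|^2_{\sii(\Real)}/\|\rho\varphi_n\|^2_{\sii(\Real)}$.

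To exploit the slow decay of~$\rho$ at infinity I use the scaling
$\varphi_n(x_1) := f(x_1/n)$ with $f \in C^\infty_c(\Real)$
vanishing on an entire neighbourhood of~$0$
(so $\varphi_n \equiv 0$ on $\supp\dot\theta$ for all $n$ sufficiently large).
A change of variable $y = x_1/n$ gives
$n\,\|\dot\varphi_n\|^2_{\sii(\Real)} = \|\dot f\|^2_{\sii(\Real)}$ and
\begin{equation*}
  n\,\|\rho\varphi_n\|^2_{\sii(\Real)}
  \, = \, \int_\Real \frac{n^2 |f(y)|^2}{1+n^2 y^2}\,dy
  \,\xrightarrow[n\to\infty]{}\,
  \int_\Real \frac{|f(y)|^2}{y^2}\,dy \,,
\end{equation*}
by dominated convergence (the integrand is bounded pointwise by $|f(y)|^2/y^2$,
which is integrable because $f$ vanishes near~$0$).
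Letting $n \to \infty$ in the Rayleigh quotient yields
$c_H \leq \|\dot f\|^2_{\sii(\Real)}\big/\int_\Real |f(y)|^2 y^{-2}\,dy$
for every admissible~$f$.

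Finally, applying the sharp one-dimensional Hardy inequality
$\int_0^\infty |u|^2/(4y^2)\,dy \leq \int_0^\infty |\dot u|^2\,dy$
on each half-line to an $f$ vanishing on a neighbourhood of~$0$ gives
$\int_\Real |f|^2/y^2\,dy \leq 4\,\|\dot f\|^2_{\sii(\Real)}$, with sharp constant.
The infimum over admissible~$f$ of the Rayleigh quotient is therefore~$1/4$,
so $c_H \leq 1/4 \leq 1/2$, as claimed.
The only delicate step is the dominated-convergence limit above,
which is precisely why $f$ must vanish in a whole neighbourhood of~$0$
rather than merely at $y=0$; the rest is the separation of variables
already employed in Lemma~\ref{Lem.cornerstone} together with the
classical one-dimensional Hardy inequality.
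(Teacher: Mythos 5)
Your proof is correct, and it in fact establishes the sharper bound $c_H \leq 1/4$, which of course implies the stated $c_H \leq 1/2$. The overall strategy is the same as the paper's: use product trial functions $\psi_n = \varphi_n \mathcal{J}_1$ with $\varphi_n$ supported away from $\supp\dot\theta$, so that separation of variables together with $(\mathcal{J}_1,\partial_\tau\mathcal{J}_1)_{\sii(\omega)}=0$ reduces the Rayleigh quotient to the purely one-dimensional ratio $\|\dot\varphi_n\|^2/\|\rho\varphi_n\|^2$. Where you differ is in the choice of $\varphi_n$: the paper uses piecewise-linear tent functions with nodes $b_n^1<b_n^2<b_n^3$ pushed to infinity with $b_n^2/b_n^1, b_n^3/b_n^2 \to \infty$, and an explicit computation shows the resulting Rayleigh quotient tends to $1/2$; you instead scale a fixed $f\in C_c^\infty(\Real)$ vanishing near $0$ as $\varphi_n(x_1)=f(x_1/n)$, which after the $n\to\infty$ limit replaces $\rho^2$ by $x_1^{-2}$ and reduces everything to the classical one-dimensional Hardy quotient $\|\dot f\|^2\big/\int |f|^2 y^{-2}\,dy$, whose infimum (over $f$ vanishing near $0$, which is a dense class for this purpose) is the sharp constant $1/4$. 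The dominated-convergence step is handled correctly, and the observation that $f$ must vanish on a whole neighbourhood of $0$ is exactly the right precaution. In short, your scaled trial functions asymptotically realize Hardy's extremals $|y|^{1/2}$, whereas the paper's tent functions do not, which is why you obtain the better constant; either route proves the Proposition.

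One small remark: to extract $c_H\leq 1/4$ (as opposed to merely $\leq 1/2$) you implicitly rely on the fact that the sharp Hardy constant $1/4$ on the half-line is approachable by functions vanishing in a neighbourhood of the origin — this is standard (take cutoffs of $y^{1/2\pm\alpha}$ with $\alpha\to 0$), but since you announce "the infimum is therefore $1/4$" it would be worth a sentence making explicit that the approaching sequence can be taken within the admissible class.
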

\begin{proof}
Recall the unitary equivalence of~$-\Delta_D^{\Omega_\theta}$
and~$H_\theta$ given by~\eqref{unitary}.
We proceed by contradiction and show that the operator
$H_\theta - E_1 - c \rho^2$ is not non-negative if $c>1/2$,
irrespectively of properties of~$\theta$ and~$\omega$.
(Recall that~$\rho$ was initially introduced in Theorem~\ref{Thm.Hardy}
as a function on~$\Omega_\theta$.
In this proof, with an abuse of notation,
we denote by the same symbol analogous functions on~$\Omega_0$ and~$\Real$.)
It is enough to construct a test function~$\psi$
from~$\Dom(Q_\theta)$ such that
\begin{equation*}
  P_\theta^c[\psi] :=
  Q_\theta[\psi] - E_1 \|\psi\|_{\sii(\Omega_0)}^2
  - c \, \|\rho\psi\|_{\sii(\Omega_0)}^2
  < 0 \,.
\end{equation*}
As in the proof of Proposition~\ref{Prop.difference},
we use the decomposition~\eqref{gef},
but now the sequence of functions $\varphi_n$
is defined as follows:
$$
  \varphi_n(x_1) :=
  \begin{cases}
    \frac{x_1 - b_n^1}{b_n^2-b_n^1}
    & \mbox{if} \quad x_1 \in [b_n^1,b_n^2) \,,
    \\
    \frac{b_n^3 - x_1}{b_n^3-b_n^2}
    & \mbox{if} \quad x_1 \in [b_n^2,b_n^3) \,,
    \\
    0 & \mbox{otherwise} \,.
  \end{cases}
$$
Here $\{b_n^j\}_{n\in\Nat}$, with $j=1,2,3$,
are numerical sequences such that
$\sup\supp\dot\theta < b_n^1 < b_n^2 < b_n^3$
for each $n\in\Nat$ and $b_n^1 \to \infty$ as $n\to\infty$;
further requirements will be imposed later on.
Since~$\varphi_n$ and~$\dot\theta$ have disjoint supports,
and~$\mathcal{J}_1$ is supposed to be normalized to~$1$ in~$\sii(\omega)$,
it easily follows that
$$
  P_\theta^c[\psi_n]
  = \|\dot\varphi_n\|_{\sii(\Real)}^2
  - c \, \|\rho\varphi_n\|_{\sii(\Real)}^2
  \,.
$$
Note that the right hand side is independent of~$\theta$ and~$\omega$.
An explicit calculation yields
\begin{align*}
  \|\dot\varphi_n\|_{\sii(\Real)}^2
  =\ & \frac{1}{b_n^2-b_n^1} + \frac{1}{b_n^3-b_n^2} \,,
  \\
  \|\rho\varphi_n\|_{\sii(\Real)}^2
  =\ & \frac{
  b_n^2-b_n^1 + [(b_n^1)^2-1](\arctan b_n^2-\arctan b_n^1)
  -b_n^1 \log\frac{1+(b_n^2)^2}{1+(b_n^1)^2}}
  {(b_n^2-b_n^1)^2}
  \\
  & +
  \frac{
  b_n^3-b_n^2 + [(b_n^3)^2-1](\arctan b_n^3-\arctan b_n^2)
  -b_n^3 \log\frac{1+(b_n^3)^2}{1+(b_n^2)^2}}
  {(b_n^3-b_n^2)^2} \,.
\end{align*}
Specifying the numerical sequences in such a way that
also the quotients $b_n^2/b_n^1$ and $b_n^3/b_n^2$
tend to infinity as $n\to\infty$,
%
%
it is then straightforward to check that
$$
  b_n^2 \, P_\theta^c[\psi_n] \xrightarrow[n\to\infty]{} 1 - 2 c
  \,.
$$
Since the limit is negative for $c>1/2$, it follows
that $P_\theta^c[\psi_n]$ can be made negative
by choosing~$n$ sufficiently large.
\end{proof}
The proposition shows that the effect of twisting is limited
in its nature, at least if~\eqref{locally} holds.
This will have important consequences for the usage
of energy methods when studying the heat semigroup below.

\subsection{The Sobolev inequality}
%
Regardless of whether the tube is twisted or not,
the operator $-\Delta_D^{\Omega_\theta}-E_1$
satisfies the following Sobolev-type inequality.
\begin{Theorem}[Sobolev inequality]\label{Thm.Sobolev}
Let $\theta \in C^1(\Real)$ and
suppose that $\dot\theta$ has compact support.
Then for every $\Psi \in H_0^1(\Omega_\theta) \cap \sii(\Omega_\theta,\rho^{-2})$
we have
\begin{equation}\label{Sobolev}
  \|\nabla \Psi\|_{\sii(\Omega_\theta)}^2 - E_1 \|\Psi\|_{\sii(\Omega_\theta)}^2
  \ \geq\
  c_S \, \frac{\ \|\Psi\|_{\sii(\Omega_\theta)}^6}{\|\Psi\|_1^4}
  \,,
\end{equation}
where
$
  \|\Psi\|_1 := \sqrt{\int_\omega dx_2 dx_3
  \left(\int_\Real dx_1 |(\Psi\circ\mathcal{L}_\theta)(x)|\right)^2}
$
and~$c_S$ is a positive constant
depending on~$\dot\theta$ and~$\omega$.
\end{Theorem}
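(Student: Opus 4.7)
My plan is to pull the inequality back to the straight tube $\Omega_0$ via the unitary transformation $U_\theta$ of~\eqref{unitary}. Writing $\psi := U_\theta\Psi \in H_0^1(\Omega_0)$ and using $\|\nabla\Psi\|_{\sii(\Omega_\theta)}^2 = Q_\theta[\psi]$, the task reduces to showing
\[
  Q_\theta[\psi] - E_1 \|\psi\|^2_{\sii(\Omega_0)}
  \ \geq\
  c_S \, \frac{\|\psi\|^6_{\sii(\Omega_0)}}{\|\psi\|_1^4}\,,
\]
with $\|\psi\|_1^2 = \int_\omega \bigl(\int_\Real |\psi(x_1,x')|\,dx_1\bigr)^2 dx'$. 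The proof splits naturally into a longitudinal coercivity step and a one-dimensional Nash step.

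First, I would establish a coercivity estimate
\[
  Q_\theta[\psi] - E_1 \|\psi\|^2_{\sii(\Omega_0)}
  \ \geq\
  \alpha \, \|\partial_1\psi\|^2_{\sii(\Omega_0)}
\]
with a positive constant $\alpha=\alpha(\dot\theta,\omega)$. If $\Omega_\theta$ is untwisted, $\dot\theta = 0$, so $Q_\theta[\psi] = \|\partial_1\psi\|^2 + \|\nabla'\psi\|^2$ and the bound with $\alpha=1$ is immediate from the fiberwise Poincar\'e inequality~\eqref{Poincare}. In the twisted case, I would replay Step~4 of the proof of Theorem~\ref{Thm.Hardy} to obtain, for $\eps \in (0,(1+a^2\|\dot\theta\|_{L^\infty(\Real)}^2)^{-1})$,
\[
  Q_\theta[\psi] - E_1 \|\psi\|^2
  \geq
  \eps \|\partial_1\psi\|^2
  - \frac{\eps}{1-\eps}\, a^2 E_1 \|\dot\theta\|_{L^\infty(I)}^2 \|\psi\|^2_{\sii(I\times\omega)},
\]
where $I := (\inf\supp\dot\theta,\sup\supp\dot\theta)$, and then form a convex combination of this estimate with the positive-mass bound from Lemma~\ref{Lem.cornerstone},
\[
  Q_\theta[\psi] - E_1 \;\! \|\psi\|^2 \ \geq\ \lambda(\dot\theta,I)\,\|\psi\|^2_{\sii(I\times\omega)}\,, \qquad \lambda(\dot\theta,I)>0\,.
\]
With the weights chosen so that the net coefficient in front of $\|\psi\|^2_{\sii(I\times\omega)}$ is non-negative, a strictly positive coefficient of $\|\partial_1\psi\|^2$ is retained, producing the required $\alpha > 0$.

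Second, I would apply the classical one-dimensional Nash inequality
\[
  \|f\|_{\sii(\Real)}^6 \leq K \, \|\dot f\|_{\sii(\Real)}^2 \, \|f\|_{\si(\Real)}^4\,,
  \qquad f \in H^1(\Real)\cap\si(\Real)\,,
\]
to the sections $f_{x'}(x_1) := \psi(x_1,x')$. Raising to the $1/3$-power, integrating in $x' \in \omega$, and applying H\"older's inequality with exponents $3$ and $3/2$ yields
\[
  \|\psi\|_{\sii(\Omega_0)}^2 \ \leq \ K^{1/3}\, \|\partial_1\psi\|_{\sii(\Omega_0)}^{2/3}\, \|\psi\|_1^{4/3}\,,
\]
because $\int_\omega \|f_{x'}\|_{\si(\Real)}^2 dx' = \|\psi\|_1^2$ by the very definition of $\|\cdot\|_1$. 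Cubing this bound and combining with the coercivity step gives the Sobolev inequality with $c_S = \alpha / K$.

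The main obstacle is the twisted case of the coercivity step: the cross term $\dot\theta\,\partial_\tau\psi$ in $Q_\theta$ prevents a straightforward separation of variables, and converting $\|\partial_1\psi - \dot\theta\,\partial_\tau\psi\|^2$ into $\|\partial_1\psi\|^2$ generates a negative multiple of $\|\psi\|^2_{\sii(I\times\omega)}$ that must be absorbed. Lemma~\ref{Lem.cornerstone} is precisely the device that makes this possible, by supplying a \emph{positive} amount of mass on $I\times\omega$ controlled by the form; once this is dealt with, the remaining Nash--H\"older argument is routine.
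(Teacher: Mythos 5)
Your proposal follows essentially the same route as the paper's proof: pull back to $\Omega_0$ via $U_\theta$, derive the coercivity bound $Q_\theta[\psi]-E_1\|\psi\|^2\geq\alpha\|\partial_1\psi\|^2$ by interpolating the $\epsilon$-Cauchy estimate~\eqref{bound3} with Lemma~\ref{Lem.cornerstone} (exactly the paper's interpolation of~\eqref{bound1} and~\eqref{bound3}), and then close with a fiberwise one-dimensional Nash inequality plus a H\"older step in the cross-section; your raise-to-$1/3$-integrate-H\"older-cube arrangement is the same computation as the paper's integrate-then-H\"older, and the constant $c_S=\alpha/K$ matches the paper's $c_S=\epsilon/8$. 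One small imprecision shared with the paper: ``untwisted'' allows $\omega$ rotationally symmetric with $\dot\theta\not\equiv0$, so ``untwisted $\Rightarrow\dot\theta=0$'' is not literal; that edge case is handled by identifying $\Omega_\theta$ with $\Omega_0$ via the Euclidean isometry, which is what both your proof and the paper tacitly invoke.
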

\begin{proof}
Recall that $\Psi\circ\mathcal{L}_\theta = U_\theta\Psi =:\psi$
belongs to $\sii(\Omega_0)$.
First of all, let us notice that $\|\Psi\|_1$ is well defined
for $\Psi \in \sii(\Omega_\theta,\rho^{-2})$.
Indeed, the Schwarz inequality together with Fubini's theorem yields
\begin{equation}\label{1norm}
  \|\Psi\|_1^2
  \leq \|\rho^{-1}\psi\|_{\sii(\Omega_0)}^2
  \int_\Real \frac{dx_1}{1+x_1^2}
  = \|\rho^{-1}\Psi\|_{\sii(\Omega_\theta)}^2 \, \pi
  < \infty
  \,.
\end{equation}
Here the equality of the norms is obvious from the facts that the mapping
$\mathcal{L}_\theta$ leaves invariant the first coordinate in~$\Real^3$
and that its Jacobian is one.
We also remark that, by density, it is enough to prove the theorem
for $\Psi \in C_0^\infty(\Omega_\theta)$.

The inequality~\eqref{Sobolev}
is a consequence of the one-dimensional inequality
\begin{equation}\label{Sobolev.1D}
  \forall \varphi \in H^1(\Real) \cap L^1(\Real)
  \,, \qquad
  \|\dot\varphi\|_{\sii(\Real)}^2
  \geq \frac{1}{4} \, \frac{\|\varphi\|_{\sii(\Real)}^6}{\|\varphi\|_{L^1(\Real)}^4}
  \,,
\end{equation}
which is established quite easily by combining
elementary estimates
$$
  \|\varphi\|_{\sii(\Real)}^2
  \leq \|\varphi\|_{L^1(\Real)} \|\varphi\|_{L^\infty(\Real)}
  \qquad\mbox{and}\qquad
  \|\varphi\|_{L^\infty(\Real)}^2
  \leq 2 \, \|\varphi\|_{L^2(\Real)} \|\dot\varphi\|_{L^2(\Real)}
  \,.
$$
In order to apply~\eqref{Sobolev.1D},
we need to estimate the left hand side of~\eqref{Sobolev}
from below by $\|\partial_1\psi\|_{\sii(\Omega_0)}^2$.
We can proceed as in the proof of Theorem~\ref{Thm.Hardy}.
Interpolating between the bounds~\eqref{bound1} and~\eqref{bound3},
we get
\begin{equation*}
  \|\nabla\Psi\|_{\sii(\Omega_\theta)}^2
  - E_1 \|\Psi\|_{\sii(\Omega_\theta)}^2
  \geq \frac{\epsilon}{2} \, \|\partial_1\psi\|_{\sii(\Omega_0)}^2
  \,,
\end{equation*}
where $\epsilon=:8 \, c_S$ is a positive constant
depending on~$\dot\theta$ and~$\omega$.
Using now~\eqref{Sobolev.1D} with help of Fubini's theorem,
we conclude the proof with
$$
  \|\partial_1\psi\|_{\sii(\Omega_0)}^2
  \geq \frac{1}{4} \int_\omega
  \frac{\|\psi(\cdot,x_2,x_3)\|_{\sii(\Real)}^6}
  {\|\psi(\cdot,x_2,x_3)\|_{L^1(\Real)}^4}
  \, dx_2 dx_3
  \geq \frac{1}{4} \frac{\ \|\Psi\|_{\sii(\Omega_\theta)}^6}{\|\Psi\|_1^4}
  \,.
$$
Here the second inequality follows by the H\"older inequality
with properly chosen conjugate exponents
(recall also that
$
  \|\psi\|_{\sii(\Omega_0)}=\|\Psi\|_{\sii(\Omega_\theta)}
$).
\end{proof}
%

\section{The energy estimates}\label{Sec.heat}
%
\subsection{The heat equation}
%
Having the replacement
$
  u(x,t) \mapsto e^{-E_1 t} \, u(x,t)
$
for~\eqref{I.heat} in mind,
let us consider the following $t$-time evolution problem
in the tube~$\Omega_\theta$:
\begin{equation}\label{heat}
  \left\{
\begin{aligned}
  u_t - \Delta u - E_1 u &= 0
  &\quad\mbox{in} & \quad \Omega_\theta\times(0,\infty) \,,
  \\
  u &= u_0
  &\quad\mbox{in} & \quad \Omega_\theta\times\{0\} \,,
  \\
  u &= 0
  &\quad\mbox{in} & \quad (\partial\Omega_\theta)\times(0,\infty) \,,
\end{aligned}
  \right.
\end{equation}
where $u_0 \in \sii(\Omega_\theta)$.

As usual, we consider the weak formulation of the problem,
\ie, we say a Hilbert space-valued function
$
  u \in \sii_\mathrm{loc}\big((0,\infty);H_0^1(\Omega_\theta)\big)
$,
with the weak derivative
$
  u' \in \sii_\mathrm{loc}\big((0,\infty);H^{-1}(\Omega_\theta)\big)
$,
is a (global) solution of~\eqref{heat} provided that
\begin{equation}\label{heat.weak}
  \big\langle v,u'(t)\big\rangle
  + \big(\nabla v,\nabla u(t)\big)_{\sii(\Omega_\theta)}
  - E_1 \, \big(v,u(t)\big)_{\sii(\Omega_\theta)} = 0
\end{equation}
for each $v \in H_0^1(\Omega_\theta)$ and a.e. $t\in[0,\infty)$,
and $u(0)=u_0$.
Here $\langle\cdot,\cdot\rangle$
denotes the pairing of $H_0^1(\Omega_\theta)$ and $H^{-1}(\Omega_\theta)$.
With an abuse of notation, we denote by the same symbol~$u$
both the function on $\Omega_\theta\times(0,\infty)$
and the mapping $(0,\infty) \to H_0^1(\Omega_\theta)$.

Standard semigroup theory implies that there indeed exists
a unique solution of~\eqref{heat} that belongs to
$C^0\big([0,\infty),\sii(\Omega_\theta)\big)$.
More precisely, the solution is given by $u(t) = S(t) u_0$,
where~$S(t)$ is the heat semigroup~\eqref{semigroup}
associated with $-\Delta_D^{\Omega_\theta}-E_1$.
By the Beurling-Deny criterion,
$S(t)$~is positivity-preserving for all~$t \geq 0$.

Since~$E_1$ corresponds to the threshold of
the spectrum of $-\Delta_D^{\Omega_\theta}$
if~\eqref{locally} holds,
we cannot expect a uniform decay of solutions of~\eqref{heat}
as $t\to\infty$ in this case.
More precisely, the spectral mapping theorem
together with~\eqref{spectrum} yields:
\begin{Proposition}\label{Prop.nodecay}
Let $\theta \in C^1(\Real)$ and
suppose that~$\dot\theta$ has compact support.
Then for each time $t \geq 0$ we have
$$
  \|S(t)\|_{\sii(\Omega_\theta)\to\sii(\Omega_\theta)} \, = \, 1
  \,.
$$
\end{Proposition}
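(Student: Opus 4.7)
The plan is to exploit the self-adjointness of $S(t)$ and the spectral theorem, combining the two facts already established in the preceding sections: the non-negativity of $-\Delta_D^{\Omega_\theta}-E_1$ given by the Poincar\'e-type inequality~\eqref{Poincare.tube}, and the saturation of that bound provided by~\eqref{spectrum}.

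For the upper bound $\|S(t)\|_{\sii(\Omega_\theta)\to\sii(\Omega_\theta)} \leq 1$, I would observe that by~\eqref{Poincare.tube} the operator $A:=-\Delta_D^{\Omega_\theta}-E_1$ is a non-negative self-adjoint operator on $\sii(\Omega_\theta)$. Writing $S(t)=e^{-tA}$ and applying the functional calculus, one immediately gets
\begin{equation*}
  \|S(t)\|_{\sii(\Omega_\theta)\to\sii(\Omega_\theta)}
  \,\leq\, \sup_{\lambda\in\sigma(A)} e^{-t\lambda}
  \,\leq\, 1,
\end{equation*}
since $\sigma(A)\subset[0,\infty)$.

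For the matching lower bound, I would invoke~\eqref{spectrum}, which guarantees that $0=\inf\sigma(A)$ and indeed $0\in\sigma_\mathrm{ess}(A)$. By Weyl's criterion there exists a sequence $\{\psi_n\}\subset\Dom(A)$ with $\|\psi_n\|_{\sii(\Omega_\theta)}=1$ and $\|A\psi_n\|_{\sii(\Omega_\theta)}\to 0$. The elementary identity $S(t)\psi_n-\psi_n=-\int_0^t S(s)A\psi_n\,ds$ together with the already established contractivity $\|S(s)\|\leq 1$ then yields $\|S(t)\psi_n-\psi_n\|_{\sii(\Omega_\theta)}\leq t\,\|A\psi_n\|_{\sii(\Omega_\theta)}\to 0$, so $\|S(t)\psi_n\|_{\sii(\Omega_\theta)}\to 1$ and hence $\|S(t)\|\geq 1$. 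Equivalently, one may phrase this via the spectral mapping theorem for bounded Borel functions of self-adjoint operators: $\sigma(S(t))=\{e^{-t\lambda}:\lambda\in\sigma(A)\}=(0,1]$, and because $S(t)$ is self-adjoint its operator norm coincides with its spectral radius, which is~$1$.

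There is no genuine obstacle here; the statement is essentially a packaging of~\eqref{Poincare.tube} and~\eqref{spectrum}. The only point worth checking is that $S(t)$ is indeed self-adjoint on $\sii(\Omega_\theta)$, which is immediate since it is the heat semigroup of a self-adjoint operator, and that Weyl's criterion is applicable because the threshold $E_1$ lies in the essential (not merely the full) spectrum thanks to~\eqref{spectrum}.
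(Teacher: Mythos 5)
Your argument is correct and is essentially the same route the paper takes: the paper simply invokes the spectral mapping theorem together with~\eqref{spectrum} and leaves the details implicit, whereas you fill them in (functional calculus for the upper bound, Weyl's criterion or the spectral radius of the self-adjoint operator $S(t)$ for the lower bound). The only cosmetic slip is that $\sigma(S(t))$ should be the closure $[0,1]$ rather than $(0,1]$ for $t>0$, but this does not affect the conclusion since the spectral radius is still~$1$.
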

\noindent
Consequently, for each $t>0$ and each $\varepsilon\in(0,1)$
we can find an initial datum $u_0 \in H_0^1(\Omega_\theta)$
such that $\|u_0\|_{\sii(\Omega_\theta)}=1$
and such that the solution of~\eqref{heat} satisfies
$$
  \|u(t)\|_{\sii(\Omega_\theta)} \ \geq \ 1-\varepsilon
  \,.
$$

\subsection{The dimensional decay rate}
%
However, if we restrict ourselves
to initial data decaying sufficiently fast
at the infinity of the tube, it is possible to obtain
a polynomial decay rate for the solutions of~\eqref{heat}.
In particular, we have the following result
based on Theorem~\ref{Thm.Sobolev}:
\begin{Theorem}\label{Thm.decay.1D}
Let $\theta \in C^1(\Real)$ and
suppose that~$\dot\theta$ has compact support.
Then for each time $t \geq 0$ we have
$$
  \|S(t)\|_{
  \sii(\Omega_\theta,\rho^{-2})
  \to
  \sii(\Omega_\theta)
  }
  \, \leq \,
  \left(
  1 + \frac{4 \, c_S}{\pi^2} \, t
  \right)^{-1/4}
  \,,
$$
where~$c_S$ is the positive constant of Theorem~\ref{Thm.Sobolev}
and~$\rho$ is introduced in Theorem~\ref{Thm.Hardy}.
\end{Theorem}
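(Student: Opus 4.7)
The plan is to derive a Nash-type differential inequality from the Sobolev inequality of Theorem~\ref{Thm.Sobolev} and integrate it. Testing the weak formulation~\eqref{heat.weak} against $u(t)$ gives the energy identity $F'(t) = -2\bigl(\|\nabla u\|_{\sii(\Omega_\theta)}^2 - E_1 \|u\|_{\sii(\Omega_\theta)}^2\bigr)$ for a.e.~$t>0$, where $F(t) := \|u(t)\|_{\sii(\Omega_\theta)}^2$; combining this with Theorem~\ref{Thm.Sobolev} yields
\[
F'(t) \,\leq\, -\,2\,c_S\,\frac{F(t)^{3}}{\|u(t)\|_1^{4}}.
\]
The rest of the proof hinges on controlling the time-dependent quantity $\|u(t)\|_1$ in terms of the initial data, with the goal $\|u(t)\|_1^2 \leq \pi\,\|u_0\|_{\sii(\Omega_\theta,\rho^{-2})}^2$ which, in view of~\eqref{1norm}, would follow from the monotonicity
\[
  \|u(t)\|_1 \,\leq\, \|u_0\|_1 \qquad \text{for all}\ t \geq 0. \qquad (\star)
\]

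The monotonicity $(\star)$ is the main technical obstacle. By the Beurling--Deny criterion the semigroup $S(t)$ is positivity-preserving and $|S(t) u_0| \leq S(t) |u_0|$ pointwise, so $(\star)$ reduces to $u_0 \geq 0$. Working in untwisted coordinates, set $\psi:=U_\theta u\geq 0$ and $q(t,x') := \int_\Real \psi(t,x_1,x')\,dx_1$; then $\|u(t)\|_1 = \|q(t)\|_{\sii(\omega)}$, and $q(t,\cdot)\in H_0^1(\omega)$ thanks to the Dirichlet condition on $\Real\times\partial\omega$. Integrating the transformed equation $\psi_t = (\partial_1-\dot\theta\,\partial_\tau)^2\psi+\Delta'\psi+E_1\psi$ over $x_1\in\Real$, integrating by parts in $x_1$, and exploiting the compact support of~$\dot\theta$ together with the decay of $\psi$ and $\partial_1\psi$ at infinity yield
\[
q_t \,=\, \Delta' q + E_1 q \,-\, \partial_\tau \!\int_\Real \dot\theta\,(\partial_1-\dot\theta\,\partial_\tau)\psi\,dx_1.
\]
Since $q\in H_0^1(\omega)$, the Poincar\'e inequality~\eqref{Poincare} applied in the cross-section shows that the leading part contributes non-positively to $\tfrac{1}{2}\tfrac{d}{dt}\|q\|_{\sii(\omega)}^2$; in the untwisted case ($\dot\theta\equiv 0$) the twist correction vanishes and $(\star)$ is immediate. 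In the twisted case, the plan is to estimate the correction using~\eqref{a-estimate} (to relate $\partial_\tau q$ to $\nabla' q$) together with the uniform bound $\|(\partial_1-\dot\theta\,\partial_\tau)\psi(t)\|_{\sii(\Omega_0)}^2 \leq Q_\theta[\psi_0]$ (a consequence of the monotone non-increase of $Q_\theta[\psi]-E_1\|\psi\|^2$ along the flow), and to absorb the resulting cross term into the slack of the Poincar\'e estimate for~$q$; the compact support of~$\dot\theta$ is essential to make this absorption quantitative.

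Granted $(\star)$, and hence $\|u(t)\|_1^2\leq\pi\,\|u_0\|_{\sii(\Omega_\theta,\rho^{-2})}^2$, the Nash-type inequality becomes $F'(t)\leq -\bigl(2c_S/(\pi^2\|u_0\|_{\sii(\Omega_\theta,\rho^{-2})}^4)\bigr)F(t)^{3}$. Rewriting this as $\tfrac{d}{dt}(F^{-2})\geq 4c_S/(\pi^2\|u_0\|_{\sii(\Omega_\theta,\rho^{-2})}^4)$, integrating over $[0,t]$, and using the trivial bound $F(0)=\|u_0\|_{\sii(\Omega_\theta)}^2\leq\|u_0\|_{\sii(\Omega_\theta,\rho^{-2})}^2$ (since $\rho^{-2}\geq 1$), one obtains
\[
F(t)^{-2} \,\geq\, \|u_0\|_{\sii(\Omega_\theta,\rho^{-2})}^{-4}\,\Bigl(1+\tfrac{4c_S\,t}{\pi^{2}}\Bigr),
\]
and raising to the $1/4$-th power yields the bound asserted in Theorem~\ref{Thm.decay.1D}.
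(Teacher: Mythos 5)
Your overall architecture matches the paper's: test the weak formulation against $u$, invoke Theorem~\ref{Thm.Sobolev} to get a Nash-type inequality $F'\le -2c_S F^3/\|u\|_1^4$, establish the $L^1$-type monotonicity $(\star)$, and integrate. Steps one and three are fine. The genuine gap is in your route to $(\star)$: the proposed ``absorb the twist correction into the slack of the Poincar\'e estimate for $q$'' does not work, because there is no such slack in the relevant direction. Writing $q=c\,\mathcal{J}_1+q^\perp$ with $q^\perp\perp\mathcal{J}_1$, the Poincar\'e defect $\|\nabla'q\|_{\sii(\omega)}^2-E_1\|q\|_{\sii(\omega)}^2$ equals $\|\nabla'q^\perp\|^2-E_1\|q^\perp\|^2$ and vanishes entirely when $q^\perp=0$, whereas the twist correction $(\partial_\tau q,\int_\Real\dot\theta\,(\partial_1-\dot\theta\partial_\tau)\psi\,dx_1)_{\sii(\omega)}$ contains the term $c\,(\partial_\tau\mathcal{J}_1,\int_\Real\dot\theta\,g\,dx_1)$ which is $O(\|\dot\theta\|\,\|(\partial_1-\dot\theta\partial_\tau)\psi(t)\|)$, a quantity that is merely bounded in time, not small. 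Young's inequality then yields at best $\tfrac{d}{dt}\|q\|^2\lesssim \|q\|^2+\mathrm{const}$, i.e.\ exponential growth, not the nonincrease you need. (There is also a secondary issue: the uniform bound $\|(\partial_1-\dot\theta\partial_\tau)\psi(t)\|^2\le Q_\theta[\psi_0]$ presumes $\psi_0\in H_0^1(\Omega_0)$, whereas the hypothesis only puts $u_0$ in the weighted $\sii$ space; one would have to smooth for positive times and pass to the limit.)

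The paper avoids this difficulty by a different device: rather than integrating the PDE over $x_1$ once and for all, it tests the weak formulation against $v_n:=\varphi_n\,\bar{u}_n$ with a space cutoff $\varphi_n$, and the resulting error term in the monotonicity estimate carries a prefactor $\|\dot\varphi_n\|_{\sii(\Real)}$ which tends to zero as $n\to\infty$. In other words, the paper never has to absorb the twist correction pointwise in time; the error is driven to zero in the cutoff limit, and the finiteness of $\int_0^t\|\nabla u\|^2$ (which follows from the basic energy estimate on any finite time interval) is all that is needed. Your decomposition lacks an analogous vanishing parameter, which is why the absorption fails; reintroducing the cutoff in the definition of $q$ and keeping careful track of both the $\int\dot\varphi_n g$ term and the $\int\dot\theta\varphi_n g$ term essentially reduces to the paper's computation.
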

\begin{proof}
The statement is equivalent to the following bound
for the solution~$u$ of~\eqref{heat}:
\begin{equation}\label{dispersive}
  \forall t\in[0,\infty) \,, \qquad
  \|u(t)\|_{\sii(\Omega_\theta)}
  \ \leq \
  \|\rho^{-1}u_0\|_{\sii(\Omega_\theta)}
  \left(
  1 + \frac{4 \, c_S}{\pi^2} \, t
  \right)^{-1/4}
  \,,
\end{equation}
where
$
  u_0\in \sii(\Omega_\theta,\rho^{-2})
$
is any non-trivial datum.
It is easy to see that the real and imaginary parts of
the solution of~\eqref{heat} evolve separately.
Furthermore, since~$S(t)$ is positivity-preserving,
given a non-negative datum~$u_0$,
the solution~$u(t)$ remains non-negative for all $t \geq 0$.
Consequently, establishing the bound for
positive and negative parts of~$u(t)$ separately,
it is enough to prove~\eqref{dispersive} for
non-negative (and non-trivial) initial data only.
Without loss of generality,
we therefore assume in the proof below
that $u(t) \geq 0$ for all $t \geq 0$.

Let $\{\varphi_n\}_{n=1}^\infty$ be the family of mollifiers on~$\Real$
given by~\eqref{mollifier}; we denote by the same symbol
the functions $\varphi_n \otimes 1$
on $\Real\times\Real^2 \supset \Omega_\theta$.
Inserting the trial function
$$
  v_n(x;t) := \varphi_n(x_1) \, \bar{u}_n(x_2,x_3;t)
  \,, \qquad
  \bar{u}_n(x_2,x_3;t)
  := \big\|\varphi_n u(\cdot,x_2,x_3;t)\big\|_{\si(\Real)}
  \,,
$$
into~\eqref{heat.weak}, we arrive at
(recall the definition of $\|\cdot\|_1$ from Theorem~\ref{Thm.Sobolev})
\begin{align*}
  \frac{1}{2} \frac{d}{dt} \|\varphi_n u(t)\|_1^2
  &= - \|\nabla\bar{u}_n(t)\|_{\sii(\omega)}^2
  + E_1 \|\bar{u}_n(t)\|_{\sii(\omega)}^2
  - \big(\partial_1 v_n(t),\partial_1 u(t)\big)_{\sii(\Omega_\theta)}
  \\
  &\leq \big(\partial_1 v_n(t),\partial_1 u(t)\big)_{\sii(\Omega_\theta)}
  \\
  &\leq \|\partial_1 v_n(t)\|_{\sii(\Omega_\theta)}
  \|\nabla u(t)\|_{\sii(\Omega_\theta)}
  \,.
\end{align*}
Here the first inequality is due to the Poincar\'e-type inequality
in the cross-section~\eqref{Poincare}.
We clearly have
$$
  \|\partial_1 v_n(t)\|_{\sii(\Omega_\theta)}
  = \|\dot\varphi_n\|_{\sii(\Real)}
  \, \|\bar{u}_n(t)\|_{\sii(\omega)}
  = \|\dot\varphi_n\|_{\sii(\Real)}
  \, \|\varphi_n u(t)\|_1
  \,.
$$
Integrating the differential inequality,
we therefore get
$$
  \|\varphi_n u(t)\|_1 -\|\varphi_n u_0\|_1
  \leq \|\dot\varphi_n\|_{\sii(\Real)}
  \int_0^t \|\nabla u(t')\|_{\sii(\Omega_\theta)}^2 \, dt'
  \,.
$$
Since $\|\dot\varphi_n\|_{\sii(\Real)} \to 0$
and $\{\varphi_n\}_{n=1}^\infty$ is an increasing sequence of functions
converging pointwise to~$1$ as $n\to \infty$,
we conclude from this inequality that
\begin{equation}\label{mass}
  \forall t \in [0,\infty) \,, \qquad
  \|u(t)\|_1 \leq \|u_0\|_1
  \,,
\end{equation}
where $\|u_0\|_1$ is finite due to~\eqref{1norm}.

Now, substituting~$u$ for the trial function~$v$ in~\eqref{heat.weak},
applying Theorem~\ref{Thm.Sobolev} and using~\eqref{mass},
we get
\begin{align*}
  \frac{1}{2} \frac{d}{dt} \|u(t)\|_{\sii(\Omega_\theta)}^2
  &= - \Big(\|\nabla u(t)\|_{\sii(\Omega_\theta)}^2
  - E_1 \|u(t)\|_{\sii(\Omega_\theta)}^2\Big)
  \\
  &\leq - c_S \, \frac{\ \|u(t)\|_{\sii(\Omega_\theta)}^6}{\, \|u(t)\|_1^4}
  \\
  &\leq - c_S \, \frac{\ \|u(t)\|_{\sii(\Omega_\theta)}^6}{\, \|u_0\|_1^4}
  \,.
\end{align*}
An integration of this differential inequality leads to
\begin{align*}
  \forall t\in[0,\infty) \,, \qquad
  \|u(t)\|_{\sii(\Omega_\theta)}
  \ \leq \
  \|u_0\|_{\sii(\Omega_\theta)}
  \left(
  1+4 \, c_S \, \frac{\|u_0\|_{\sii(\Omega_\theta)}^4}{\|u_0\|_1^4} \ t
  \right)^{-1/4}
  .
\end{align*}

Dividing the last inequality by $\|\rho^{-1} u_0\|_{\sii(\Omega_\theta)}$
and replacing $\|u_0\|_1$ with $\|\rho^{-1} u_0\|_{\sii(\Omega_\theta)}$
using~\eqref{1norm}, we get
\begin{align*}
  \frac{\|u(t)\|_{\sii(\Omega_\theta)}}{\|\rho^{-1} u_0\|_{\sii(\Omega_\theta)}}
  \ \leq \
  \xi
  \left(
  1+ \frac{4 \, c_S}{\pi^2} \, \xi^4 \ t
  \right)^{-1/4}
  \leq
  \left(
  1+ \frac{4 \, c_S}{\pi^2} \ t
  \right)^{-1/4}
  ,
\end{align*}
where
$
  \xi := \|u_0\|_{\sii(\Omega_\theta)} / \|\rho^{-1} u_0\|_{\sii(\Omega_\theta)}
  \in (0,1)
$.
This establishes~\eqref{dispersive}.
\end{proof}

As a direct consequence of the theorem, we get:
\begin{Corollary}\label{Corol.decay.1D}
Under the hypotheses of Theorem~\ref{Thm.decay.1D},
$\Gamma(\Omega_\theta) \geq 1/4$.
\end{Corollary}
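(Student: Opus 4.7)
The plan is to deduce the corollary directly from Theorem~\ref{Thm.decay.1D} via a continuous embedding of weighted $\sii$ spaces. The point is that the Gaussian weight $K(x) = e^{x_1^2/4}$ appearing in the definition of $\Gamma(\Omega_\theta)$ dominates the polynomial weight $\rho^{-2}(x) = 1 + x_1^2$ controlling the norm in Theorem~\ref{Thm.decay.1D}.

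Concretely, I would first observe that the function $(1+x_1^2) e^{-x_1^2/4}$ is continuous on $\Real$ and tends to zero as $|x_1|\to\infty$, hence bounded. Let $C_0 := \sup_{x_1 \in \Real} (1+x_1^2) e^{-x_1^2/4} < \infty$. Then, for any $u_0 \in \sii(\Omega_\theta,K)$,
\begin{equation*}
  \|u_0\|_{\sii(\Omega_\theta,\rho^{-2})}^2
  = \int_{\Omega_\theta} |u_0(x)|^2 (1+x_1^2) \, dx
  \leq C_0 \int_{\Omega_\theta} |u_0(x)|^2 K(x) \, dx
  = C_0 \, \|u_0\|_{\sii(\Omega_\theta,K)}^2,
\end{equation*}
so in particular $\sii(\Omega_\theta,K) \hookrightarrow \sii(\Omega_\theta,\rho^{-2})$ continuously.

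Combining this embedding with the estimate of Theorem~\ref{Thm.decay.1D} yields
\begin{equation*}
  \|S(t) u_0\|_{\sii(\Omega_\theta)}
  \leq \left(1 + \frac{4 c_S}{\pi^2}\, t\right)^{-1/4}
  \|u_0\|_{\sii(\Omega_\theta,\rho^{-2})}
  \leq C_0^{1/2} \left(1 + \frac{4 c_S}{\pi^2}\, t\right)^{-1/4}
  \|u_0\|_{\sii(\Omega_\theta,K)}
\end{equation*}
for every $t \geq 0$. Since $c_S > 0$, the factor $(1 + 4 c_S t/\pi^2)^{-1/4}$ is majorized by $C_1 (1+t)^{-1/4}$ for some $C_1 > 0$ depending only on $c_S$. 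Setting $C_{1/4} := C_0^{1/2} C_1$ shows that $\Gamma = 1/4$ is admissible in the supremum defining $\Gamma(\Omega_\theta)$, and therefore $\Gamma(\Omega_\theta) \geq 1/4$.

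There is no genuine obstacle here: the corollary is a formal consequence of Theorem~\ref{Thm.decay.1D} once one notices that the Gaussian weight used to define $\Gamma(\Omega_\theta)$ is strictly stronger than the polynomial weight that appears in the Sobolev-type decay estimate. The content of the corollary is therefore merely to record, for comparison with the twisted case in Theorem~\ref{Thm.rate}, the trivial fact that the untwisted bound $\Gamma(\Omega_\theta) \geq 1/4$ holds regardless of whether $\Omega_\theta$ is twisted.
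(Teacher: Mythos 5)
Your proof is correct and is precisely the paper's argument: the paper's proof is the one-line remark that $\sii(\Omega_\theta,K)$ embeds continuously into $\sii(\Omega_\theta,\rho^{-2})$, which you have simply spelled out by bounding $(1+x_1^2)e^{-x_1^2/4}$ and feeding the embedding into Theorem~\ref{Thm.decay.1D}.
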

\begin{proof}
It is enough to realize that $\sii(\Omega_\theta,K)$
is embedded in $\sii(\Omega_\theta,\rho^{-2})$.
\end{proof}

The following proposition shows
that the decay rate of Theorem~\ref{Thm.decay.1D}
is optimal for untwisted tubes.
\begin{Proposition}\label{Prop.decay.1D}
Let $\Omega_\theta$ be untwisted.
Then for each time $t \geq 0$ we have
$$
  \|S(t)\|_{
  \sii(\Omega_\theta,K)
  \to
  \sii(\Omega_\theta)
  }
  \, \geq \,
  \frac{1}{\sqrt{2}} \,
  \left(
  1+t
  \right)^{-1/4}
  \,.
$$
\end{Proposition}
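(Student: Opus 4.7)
The plan is to exhibit a concrete initial datum that realizes the claimed lower bound. Since $\Omega_\theta$ is untwisted, the convention introduced after Definition~\ref{definition} permits identifying $\Omega_\theta$ with $\Omega_0 = \Real\times\omega$ via a rigid motion. Under this identification the Dirichlet Laplacian decouples between longitudinal and transverse variables, and so does the shifted semigroup $S(t)$.

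The natural test datum is the ``ground-state ansatz''
\[
  u_0(x) := \varphi_0(x_1) \;\! \mathcal{J}_1(x'),
\]
where $\mathcal{J}_1$ is the positive Dirichlet eigenfunction of $-\Delta'$ in $\omega$ associated with $E_1$ (normalized in $\sii(\omega)$) and $\varphi_0 \in \sii(\Real,K)$ is to be chosen. Because $(-\Delta_D^\omega - E_1)\mathcal{J}_1 = 0$, the transverse component is stationary under $S(t)$, so
\[
  S(t)u_0(x) = \varphi(x_1,t)\;\!\mathcal{J}_1(x'),
\]
where $\varphi$ solves the free one-dimensional heat equation $\varphi_t = \varphi_{x_1 x_1}$ on $\Real$ with initial datum $\varphi_0$. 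By Fubini the target operator-norm estimate reduces to the purely one-dimensional ratio $\|\varphi(t)\|_{\sii(\Real)}/\|\varphi_0\|_{\sii(\Real,K)}$.

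To make this ratio explicit I would take the Gaussian $\varphi_0(x_1) := e^{-x_1^2/4}$, matched to the weight $K(x_1)=e^{x_1^2/4}$ so that $K\,|\varphi_0|^2 = e^{-x_1^2/4}$ is integrable. Convolution with the 1D heat kernel preserves the Gaussian shape and yields the closed form
\[
  \varphi(x_1,t) = (1+t)^{-1/2}\, e^{-x_1^2/[4(1+t)]},
\]
from which elementary Gaussian integrations give
\[
  \|\varphi(t)\|_{\sii(\Real)}^2 = \sqrt{\tfrac{2\pi}{1+t}}, \qquad
  \|\varphi_0\|_{\sii(\Real,K)}^2 = 2\sqrt{\pi}.
\]
Taking the ratio and the square root produces $2^{-1/4}(1+t)^{-1/4}$, which is strictly larger than $\tfrac{1}{\sqrt{2}}(1+t)^{-1/4}$ since $2^{-1/4} > 2^{-1/2}$.

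There is essentially no serious obstacle because of the quasi-one-dimensional character of the untwisted tube. The only subtlety is a compatibility condition between decay of the datum and the weight: a Gaussian $e^{-\alpha x_1^2}$ belongs to $\sii(\Real,K)$ only for $\alpha > 1/8$, and an optimization over admissible $\alpha$ asymptotically selects $\alpha = 1/4$. The borderline exponent $\alpha = 1/8$ marks the breakdown of the method and reflects the sharpness of the $(1+t)^{-1/4}$ rate that Theorem~\ref{Thm.decay.1D} established as an upper bound.
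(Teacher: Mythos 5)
Correct, and essentially the same approach as the paper: both reduce to the one-dimensional heat equation via separation of variables in the untwisted tube, take a Gaussian initial datum $\varphi_0(x_1)=e^{-\alpha x_1^2}$ with $\alpha>1/8$ (to lie in $\sii(\Real,K)$), and evaluate the resulting $\sii$ norms explicitly. You fix $\alpha=1/4$, yielding the lower bound $2^{-1/4}(1+t)^{-1/4}$; the paper works with the family $\varphi_n=e^{-x_1^2/n}$ and chooses $n=6$, yielding $\tfrac{1}{\sqrt2}(1+2t/3)^{-1/4}$ — two slightly different admissible members of the same Gaussian family, both sufficient for the stated bound.
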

\begin{proof}
Without loss of generality, we may assume $\theta = 0$.
It is enough to find an initial datum
$u_0 \in \sii(\Omega_0,K)$
such that the solution~$u$ of~\eqref{heat} satisfies
\begin{equation}\label{norate}
  \forall t\in[0,\infty) \,, \qquad
  \frac{\|u(t)\|_{\sii(\Omega_0)}}{\|u_0\|_{\sii(\Omega_0,K)}}
  \ \geq \
  \frac{1}{\sqrt{2}} \,
  \left(
  1+t
  \right)^{-1/4}
  \,.
\end{equation}
The idea is to take $u_0 := \psi_n$,
where $\{\psi_n\}_{n=1}^\infty$ is the sequence~\eqref{gef}
approximating a generalized eigenfunction of $-\Delta_D^{\Omega_0}$
corresponding to the threshold energy~$E_1$.
Using the fact that~$\Omega_0$ is a cross-product
of~$\Real$ and~$\omega$,
\eqref{heat}~can be solved explicitly in terms
of an expansion into the eigenbasis
of the Dirichlet Laplacian in the cross-section
and a partial Fourier transform in the longitudinal variable.
In particular, for our initial data we get
$$
  \|u(t)\|_{\sii(\Omega_0)}^2
  = \int_\Real |\hat{\varphi}_n(\xi)|^2 \, \exp{(-2 \xi^2 t)} \, d\xi
  = \sqrt{\frac{n}{n+4t}}
  \sqrt{\frac{\pi n}{2}}
  \,,
$$
where the second equality is a result of an explicit calculation
enabled due to the special form of~$\varphi_n$ given by~\eqref{mollifier}.
At the same time, for every $n<8$
$\psi_n$~belongs to $\sii(\Omega_0,K)$
and an explicit calculation yields
$$
  \|u_0\|_{\sii(\Omega_0,K)}^2
  = 2 \, \sqrt{\frac{\pi n}{8-n}}
  \,.
$$
For the special choice $n=6$ we get that the left hand side
of~\eqref{norate} actually equals the right hand side
with~$t$ being replaced by $2t/3 < t$.
\end{proof}

The power~$1/4$ in the decay bounds of Theorem~\ref{Thm.decay.1D}
and Proposition~\ref{Prop.decay.1D} reflects the quasi-one-dimensional
nature of~$\Omega_\theta$ (\cf~\eqref{P.decay}),
at least if the tube is untwisted.
More precisely, Proposition~\ref{Prop.decay.1D} readily implies
that the inequality of Corollary~\ref{Corol.decay.1D} is sharp
for untwisted tubes.
\begin{Corollary}\label{Corol.norate}
Let $\Omega_\theta$ be untwisted.
Then $\Gamma(\Omega_\theta) = 1/4$.
\end{Corollary}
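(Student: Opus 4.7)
The plan is to combine the two recently established bounds on the operator norm of $S(t)$: the lower bound of Proposition~\ref{Prop.decay.1D} (valid for untwisted tubes) and the upper bound underlying Corollary~\ref{Corol.decay.1D} (valid for all tubes satisfying the hypotheses). Together they will pin down $\Gamma(\Omega_\theta)$ exactly in the untwisted case.

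First, from Corollary~\ref{Corol.decay.1D} we already have $\Gamma(\Omega_\theta) \geq 1/4$, so it suffices to establish the reverse inequality $\Gamma(\Omega_\theta) \leq 1/4$. I would argue by contradiction: suppose $\Gamma(\Omega_\theta) > 1/4$. By the very definition of $\Gamma(\Omega_\theta)$ as a supremum, one could then pick some $\Gamma \in (1/4, \Gamma(\Omega_\theta))$ and a constant $C_\Gamma > 0$ such that
\begin{equation*}
  \|S(t)\|_{\sii(\Omega_\theta,K) \to \sii(\Omega_\theta)}
  \,\leq\, C_\Gamma \, (1+t)^{-\Gamma}
  \qquad \text{for all } t \geq 0.
\end{equation*}

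Next, I would confront this with the lower bound from Proposition~\ref{Prop.decay.1D}, which (since $\Omega_\theta$ is untwisted) gives
\begin{equation*}
  \|S(t)\|_{\sii(\Omega_\theta,K) \to \sii(\Omega_\theta)}
  \,\geq\, \frac{1}{\sqrt{2}} \, (1+t)^{-1/4}
  \qquad \text{for all } t \geq 0.
\end{equation*}
Chaining the two inequalities yields $\frac{1}{\sqrt{2}} \leq C_\Gamma (1+t)^{1/4 - \Gamma}$ for every $t \geq 0$. Since $\Gamma > 1/4$, the right-hand side tends to zero as $t \to \infty$, which is the desired contradiction. Hence $\Gamma(\Omega_\theta) \leq 1/4$ and, together with Corollary~\ref{Corol.decay.1D}, we conclude $\Gamma(\Omega_\theta) = 1/4$.

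There is no serious obstacle here; the whole point is that Proposition~\ref{Prop.decay.1D} was stated precisely to extract the matching lower bound for the operator norm at every time $t \geq 0$, so that the supremum in the definition of $\Gamma(\Omega_\theta)$ cannot exceed $1/4$. The only mild care needed is to work directly with the operator-norm formulation rather than with individual solutions: that way we avoid reproving that the extremal sequence $\psi_n$ from Proposition~\ref{Prop.decay.1D} exhausts the supremum, and simply transfer the pointwise-in-$t$ inequality on $\|S(t)\|$ into a statement about $\Gamma(\Omega_\theta)$.
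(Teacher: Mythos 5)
Your argument is correct and matches the paper's intent exactly: the corollary follows by combining the lower bound $\Gamma(\Omega_\theta)\geq 1/4$ from Corollary~\ref{Corol.decay.1D} with the pointwise lower bound on $\|S(t)\|_{\sii(\Omega_\theta,K)\to\sii(\Omega_\theta)}$ from Proposition~\ref{Prop.decay.1D}, which the paper leaves implicit under the phrase ``readily implies.'' Your contradiction argument is just a cleanly spelled-out version of that step.
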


This result establishes one part of Theorem~\ref{Thm.rate}.
The much more difficult part is to show that the decay rate
is improved whenever the tube is twisted.

\subsection{The failure of the energy method}\label{Sec.failure}
%
As a consequence of combination of direct energy arguments
with Theorem~\ref{Thm.Hardy}, we get the following result.
In Remark~\ref{Rem.useless} below we explain why it is useless.
\begin{Theorem}\label{Thm.decay}
Let $\Omega_\theta$ be twisted with $\theta \in C^1(\Real)$.
Suppose that~$\dot\theta$ has compact support.
Then for each time $t \geq 0$ we have
\begin{equation}\label{decay}
  \|S(t)\|_{
  \sii(\Omega_\theta,\rho^{-2})
  \to
  \sii(\Omega_\theta)
  }
  \, \leq \,
  \left(
  1+2 \, t
  \right)^{\!-\min\{1/2,c_H/2\}} \
  \,,
\end{equation}
where~$c_H$ is the positive constant of Theorem~\ref{Thm.Hardy}.
\end{Theorem}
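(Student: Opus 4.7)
The plan is to couple the Hardy inequality~\eqref{Hardy} with a weighted moment estimate for the transformed solution $\psi(t):=U_\theta u(t)$. Since $\rho$ depends only on~$x_1$ and~$\mathcal{L}_\theta$ preserves~$x_1$, the unitary~\eqref{unitary} identifies $\sii(\Omega_\theta,\rho^{-2})$ with $\sii(\Omega_0,\rho^{-2})$, and the asserted bound is equivalent to $\|\psi(t)\|^2 \leq (1+2t)^{-\min\{1,c_H\}}\,\|\rho^{-1}\psi_0\|^2$ for the solution of $\psi_t + (H_\theta-E_1)\psi = 0$ in $\sii(\Omega_0)$, where henceforth norms are understood on $\sii(\Omega_0)$.

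The first differential inequality is the dissipative identity from testing~\eqref{heat.weak} against $\psi(t)$ itself and invoking Theorem~\ref{Thm.Hardy}:
\[
  \tfrac{1}{2}\tfrac{d}{dt}\|\psi(t)\|^2
  = -\bigl(Q_\theta[\psi(t)]-E_1\|\psi(t)\|^2\bigr)
  \leq -c_H\,\|\rho\psi(t)\|^2.
\]
The second would come from testing against the admissible trial function $\rho^{-2}\psi(t)$ (justified by a mollification of $\rho^{-2}$). Since $\partial_1(\rho^{-2})=2x_1$ and $\rho^{-2}$ is a function of $x_1$ alone, the computation of $Q_\theta(\rho^{-2}\psi,\psi)$ produces the cross-term $2\mathrm{Re}(2x_1\psi,\partial_1\psi)$, which integrates by parts along~$x_1$ to $-2\|\psi\|^2$; its angular companion vanishes by antisymmetry of~$\partial_\tau$ combined with Dirichlet conditions, and the transverse Poincar\'e inequality~\eqref{Poincare} (applied fibre-wise with the $x_1$-dependent weight~$\rho^{-2}$) absorbs the shift by~$E_1$. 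This yields
\[
  \tfrac{d}{dt}\|\rho^{-1}\psi(t)\|^2 \leq 2\|\psi(t)\|^2 \leq 2\|\psi_0\|^2,
\]
and hence the linear growth $\|\rho^{-1}\psi(t)\|^2 \leq \|\rho^{-1}\psi_0\|^2\,(1+2t)$, using $\|\psi_0\|\leq\|\rho^{-1}\psi_0\|$ (since $\rho\leq 1$).

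The two inequalities are then to be coupled via the Schwarz interpolation $\|\psi\|^4 \leq \|\rho\psi\|^2\|\rho^{-1}\psi\|^2$; feeding the moment bound into the first inequality produces the Bernoulli-type closure
\[
  -\tfrac{d}{dt}\|\psi(t)\|^2 \;\geq\;
  \frac{2c_H}{\|\rho^{-1}\psi_0\|^2}\,\frac{\|\psi(t)\|^4}{1+2t}.
\]
The final step is the integration of this ODE together with the trivial bound $\|\psi(t)\|\leq\|\psi_0\|$, from which the polynomial decay with exponent $\min\{1,c_H\}$ in $\|\psi(t)\|^2$ (equivalently $\min\{1/2,c_H/2\}$ for the norm) is to be extracted; the saturation at exponent~$1$ reflects the arithmetic limit of the Schwarz step.

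The hard part will be this last integration: a direct Gronwall-style analysis of the Bernoulli closure above yields only the logarithmic decay $\|\psi(t)\|^2 \lesssim (\log(1+2t))^{-1}$, and the polynomial rate must be recovered either by a bootstrap iteration starting from an a priori polynomial input, or equivalently by monitoring a Lyapunov functional of the form $\|\psi(t)\|^{2p}\|\rho^{-1}\psi(t)\|^{2q}$ with $q=c_H p$, which a direct calculation based on the two differential inequalities and the Schwarz bound $\|\rho\psi\|^2\|\rho^{-1}\psi\|^2\geq\|\psi\|^4$ shows to be non-increasing along the flow. In view of Proposition~\ref{Prop.limit} we have $c_H\leq 1/2$, so the resulting exponent $\min\{1/2,c_H/2\}$ never exceeds $1/4$; the bound~\eqref{decay} is therefore no better than the one of Theorem~\ref{Thm.decay.1D}, which motivates the development of the self-similarity technique of Section~\ref{Sec.similar} in the sequel.
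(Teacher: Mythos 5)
Your first differential inequality, $\dot a\le -2c_H a^2/b$ with $a=\|\psi\|^2$, $b=\|\rho^{-1}\psi\|^2$, is correct and matches the paper. The gap is in the second one. You apply only the transverse Poincar\'e inequality~\eqref{Poincare} to absorb the $E_1$ shift and discard the remaining $-\|\rho^{-1}(\partial_1\psi-\dot\theta\partial_\tau\psi)\|^2$ term outright, which gives $\dot b\le 2a$. The paper instead rearranges the moment identity by introducing $v_n:=\rho_n^{-1}u$ (with $\rho_n$ a bounded truncation of $\rho$), after which the mixed terms cancel exactly and one is left with
\begin{equation*}
\tfrac12\tfrac{d}{dt}\|\rho_n^{-1}u\|^2=-\big(\|\nabla v_n\|^2-E_1\|v_n\|^2\big)+\|(\partial_1\rho_n/\rho_n)v_n\|^2 .
\end{equation*}
The bracketed group is a genuine Dirichlet energy to which Theorem~\ref{Thm.Hardy} applies a \emph{second} time, giving $-c_H\|\rho v_n\|^2$, and combined with $(\partial_1\rho/\rho)^2=\rho^2-\rho^4$ this yields the sharper $\dot b\le 2(1-c_H)a$. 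That extra $(1-c_H)$ is exactly what you are missing.

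This matters quantitatively. With your $\dot b\le 2a$ the conserved Lyapunov functional has exponent ratio $q/p=c_H$ as you claim, but eliminating $b$ from the resulting closed system gives only $a(t)\lesssim(1+t)^{-c_H/(1+c_H)}$, and a bootstrap with $\dot b\le 2a$ stabilizes at the same exponent; in both cases this is strictly below the claimed $\min\{1,c_H\}$. With the improved $\dot b\le 2(1-c_H)a$ the right Lyapunov ratio is $q/p=c_H/(1-c_H)$, elimination yields $a(t)\le a_0[1+2(a_0/b_0)t]^{-c_H}$ for $c_H\le1$ (and for $c_H\ge1$ one uses that $b$ is now monotone decreasing), and maximizing over $a_0/b_0\in(0,1]$ gives~\eqref{decay}. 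Your closing observation, that Proposition~\ref{Prop.limit} makes~\eqref{decay} no better than Theorem~\ref{Thm.decay.1D}, is correct and is exactly the content of Remark~\ref{Rem.useless}.
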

\begin{proof}
For any positive integer~$n$ and $x\in\Omega_\theta$,
let us set
$
  \rho_n(x) := \min\{\rho(x),n^{-1}\}
$.
Then $\{\rho_n^{-1}\}_{n=1}^\infty$
is a non-decreasing sequence of bounded functions
converging pointwise to~$\rho^{-1}$ as $n \to \infty$.
Recalling the definition of~$\rho$ from Theorem~\ref{Thm.Hardy},
it is clear that $x \mapsto \rho_n(x)$ is in fact
independent of the transverse variables~$x'$.
Moreover, $\rho_n^{-\gamma} u$ belongs to $H_0^1(\Omega_\theta)$
for every $\gamma\in\Real$ provided $u \in H_0^1(\Omega_\theta)$.

Choosing $v:=\rho_n^{-2} u$ in~\eqref{heat.weak}
(and possibly combining with the conjugate version
of the equation if we allow non-real initial data),
we get the identity
\begin{equation}\label{ineq1.5}
  \frac{1}{2} \frac{d}{dt} \|\rho_n^{-1}u(t)\|^2
  =
  -\|\rho_n^{-1}\nabla u(t)\|^2 + E_1 \|\rho_n^{-1}u(t)\|^2
  - \Re \Big(u(t)\nabla\rho_n^{-2},\nabla u(t)\Big)
  \,.
\end{equation}
Here and in the rest of the proof,
$\|\cdot\|$ and $(\cdot,\cdot)$ denote the norm and inner product
in $\sii(\Omega_\theta)$ (we suppress the subscripts in this proof).
Since~$\rho_n$ depends on the first variable only,
we clearly have
$
  \nabla(\rho_n^{-2})=(-2\rho^{-3}\partial_1\rho,0,0)
$.
Introducing an auxiliary function $v_n(t) := \rho_n^{-1} u(t)$,
one finds
\begin{equation*}
\begin{aligned}
  \|\rho_n^{-1}\nabla u(t)\|^2
  = \|\nabla v_n(t)\|^2
  + \|(\partial_1\rho_n/\rho_n) \, v_n(t)\|^2
  +2 \Re \Big(
  v_n(t), (\partial_1\rho_n/\rho_n) \, \partial_1 v_n(t)
  \Big)
  \,,
  \\
  \Re\Big(u(t)\nabla\rho_n^{-2},\nabla u(t)\Big)
  = -2 \|(\partial_1\rho_n/\rho_n) \, v_n(t)\|^2
  -2 \Re \Big(
  v_n(t),(\partial_1\rho_n/\rho_n) \, \partial_1 v_n(t)
  \Big)
  \,.
\end{aligned}
\end{equation*}
Combining these two identities
and substituting the explicit expression for~$\rho$,
we see that the right hand side of~\eqref{ineq1.5} equals
\begin{eqnarray}\label{explicit}
  \lefteqn{-\|\nabla v_n(t)\|^2 + E_1 \|v_n(t)\|^2
  + \|(\partial_1\rho_n/\rho_n) \, v_n(t)\|^2}
  \nonumber \\
  &&= -\|\nabla v_n(t)\|^2 + E_1 \|v_n(t)\|^2
  + \|\chi_n \rho v_n(t)\|^2 - \|\chi_n \rho^2 v_n(t)\|^2
  \\
  &&\leq (1-c_H) \, \|\chi_n \rho v_n(t)\|^2 - \|\chi_n \rho^2 v_n(t)\|^2
  \,.
  \nonumber
\end{eqnarray}
Here~$\chi_n$ denotes the characteristic function of the set
$\Omega_\theta^n:=\Omega_\theta \cap \{\supp(\partial_1\rho_n)\}$,
and the inequality follows from Theorem~\ref{Thm.Hardy}
and an obvious inclusion $\Omega_\theta^n \subset \Omega_\theta$.
Substituting back the solution~$u(t)$,
we finally arrive at
\begin{align}\label{ineq2}
  \frac{1}{2} \frac{d}{dt} \|\rho_n^{-1}u(t)\|^2
  &\leq
  (1-c_H) \, \|\chi_n \rho v_n(t)\|^2 - \|\chi_n \rho^2 v_n(t)\|^2
  \nonumber \\
  &\leq
  (1-c_H) \, \|\chi_n \rho v_n(t)\|^2
  \,.
\end{align}

Now, using the monotone convergence theorem
and recalling the initial data to which we restrict
in the hypotheses of the theorem,
the last estimate implies that $u(t)$ belongs to $\sii(\Omega_\theta,\rho^{-2})$
and that it remains true after passing to the limit $n\to\infty$,
\ie,
\begin{equation}\label{ineq2.bis}
  \frac{1}{2} \frac{d}{dt} \|\rho^{-1}u(t)\|^2
  \leq
  (1-c_H) \, \|u(t)\|^2
  \,.
\end{equation}
At the same time, we have
\begin{align}\label{ineq1}
  \frac{1}{2} \frac{d}{dt} \|u(t)\|^2
  &= - \Big(\|\nabla u(t)\|^2 - E_1 \|u(t)\|^2\Big)
  \nonumber \\
  &\leq - c_H \, \|\rho u(t)\|^2
  \nonumber \\
  &\leq - c_H \, \frac{\|u(t)\|^4}{\, \|\rho^{-1}u(t)\|^2}
  \,,
\end{align}
where the equality follows from~\eqref{heat},
the first inequality follows from Theorem~\ref{Thm.Hardy}
and the last inequality is established
by means of the Schwarz inequality.

Summing up, in view of~\eqref{ineq1} and~\eqref{ineq2.bis},
$a(t):=\|u(t)\|^2$ and $b(t):=\|\rho^{-1}u(t)\|^2$
satisfy the system of differential inequalities
\begin{equation}\label{system}
  \dot{a}
  \leq - 2 \, c_H \, \frac{a^2}{b}
  \,, \qquad
  \dot{b}
  \leq 2 \, (1-c_H) \, a
  \,,
\end{equation}
with the initial conditions
$a(0)=\|u_0\|^2=:a_0$ and $b(0)=\|\rho^{-1}u_0\|^2=:b_0$.
We distinguish two cases:
\smallskip \\
\emph{1.} \underline{$c_H \geq 1$}.
In this case, it follows from the second inequality of~\eqref{system}
that~$b$ is decreasing.
Solving the first inequality of~\eqref{system}
with~$b$ being replaced by~$b_0$, we then get
$$
  a(t) \leq a_0 \, \big[1+2 \, c_H \, (a_0/b_0) \, t \big]^{-1}
  \,.
$$
Dividing this inequality by~$b_0$
and maximizing the resulting right hand side
with respect to $a_0/b_0 \in (0,1)$,
we finally get
\begin{equation}
  \forall t\in[0,\infty) \,, \qquad
  \|u(t)\|
  \ \leq \
  \|\rho^{-1} u_0\|
  \left(
  1+2 \, c_H \, t
  \right)^{-1/2} \
  \,,
\end{equation}
which in particular implies~\eqref{decay}.
\smallskip \\
\emph{2.} \underline{$c_H \leq 1$}.
We ``linearize''~\eqref{system} by replacing one~$a$
of the square on the right hand side of first inequality
by employing the second inequality of~\eqref{system}:
$$
  \frac{\dot{a}}{a}
  \leq - 2 \, c_H \, \frac{a}{b}
  \leq -\frac{c_H}{1-c_H} \, \frac{\dot{b}}{b}
  \,.
$$
This leads to
$$
  a/a_0 \leq (b/b_0)^{-\frac{c_H}{1-c_H}} \,.
$$
Using this estimate in the original,
non-linearized system~\eqref{system},
\ie\ solving the system by eliminating~$b$
from the first and~$a$ from the second inequality of~\eqref{system},
we respectively obtain
$$
  a(t) \leq a_0 \, \big[1+2\,(a_0/b_0)\,t\big]^{-c_H}
  \,, \qquad
  b(t) \leq b_0 \, \big[1+2\,(a_0/b_0)\,t\big]^{1-c_H}
  \,.
$$
Dividing the first inequality by~$b_0$
and maximizing the resulting right hand side
with respect to $a_0/b_0 \in (0,1)$,
we finally get
\begin{equation}
  \forall t\in[0,\infty) \,, \qquad
  \|u(t)\|
  \ \leq \
  \|\rho^{-1} u_0\|
  \left(
  1+2 \, t
  \right)^{-c_H/2} \
  \,,
\end{equation}
which is equivalent to~\eqref{decay}.
\end{proof}
\begin{Remark}
We see that the power in the polynomial decay rate of Theorem~\ref{Thm.decay}
diminishes as $c_H \to 0$.
Let us now argue that this cannot be improved
by the present method of proof.
Indeed, the first inequality of~\eqref{ineq1}
is an application of the Hardy inequality of Theorem~\ref{Thm.Hardy}
and the second one is sharp.
The Hardy inequality is also applied
in the first inequality of~\eqref{ineq2}.
In the second inequality of~\eqref{ineq2}, however,
we have neglected a negative term.
Applying the second inequality of~\eqref{ineq1} to it instead,
we conclude with an improved system of differential inequalities
\begin{equation}\label{system.bis}
  \dot{a}
  \leq - 2 \, c_H \, \frac{a^2}{b}
  \,, \qquad
  \dot{b}
  \leq 2 \, (1-c_H) \, a - 2 \, \frac{a^2}{b}
  \,.
\end{equation}
The corresponding system of differential equations
has the explicit solution
\begin{equation*}
  \tilde{a}(t) = a_0 \left(\frac{\xi_0}
  {W\big[\xi_0 \exp{(\xi_0+2t)}\big]}\right)^{c_H}
  , \quad
  \tilde{b}(t) = a(t) \Big(
  1 + W\big[\xi_0 \exp{(\xi_0+2t)}\big]
  \Big)
  ,
\end{equation*}
where $\xi_0 := b_0/a_0-1>0$
and~$W$ denotes the Lambert W function (product log),
\ie~the inverse function of $w \mapsto w \exp(w)$.
Since
$$
  W\big[\xi_0 \exp{(\xi_0+2t)}\big] = 2\,t + o(t)
  \qquad\mbox{as}\qquad
  t \to \infty
  \,,
$$
we see that the $t^{-c_H/2}$ decay in~\eqref{decay} for $c_H<1$
cannot be improved
by replacing~\eqref{system} with~\eqref{system.bis}.
\end{Remark}
\begin{Remark}\label{Rem.useless}
Note that the hypothesis~\eqref{locally} is not explicitly
used in the proof of Theorem~\ref{Thm.decay},
it is just required that the inequality~\eqref{Hardy} holds
with some positive constant~$c_H$.
For tubes satisfying~\eqref{locally}, however,
we know from Proposition~\ref{Prop.limit}
that the constant cannot exceed the value $1/2$.
Consequently, irrespectively of the strength of twisting,
Theorem~\ref{Thm.decay} never represents an improvement
upon Theorem~\ref{Thm.decay.1D}.
This is what we mean by the failure of a direct energy argument
based on the Hardy inequality of Theorem~\ref{Thm.Hardy}.
\end{Remark}
%

\section{The self-similarity transformation}\label{Sec.similar}
%
Let us now turn to a completely different approach
which leads to an improved decay rate
regardless of the smallness of twisting.

\subsection{Straightening of the tube}\label{Sec.straightening}
%
First of all, we reconsider the heat equation~\eqref{heat}
in an untwisted tube~$\Omega_0$ by using the change of variables
defined by the mapping~$\mathcal{L}_\theta$.
In view of the unitary transform~\eqref{unitary},
one can identify the Dirichlet Laplacian in $\sii(\Omega_\theta)$
with the operator~\eqref{H.distributional} in $\sii(\Omega_0)$,
and it is readily seen that~\eqref{heat} is equivalent to
$$
  u_t + H_\theta u - E_1 u = 0
  \qquad \mbox{in} \qquad
  \Omega_0\times(0,\infty)
  \,,
$$
plus the Dirichlet boundary conditions on~$\partial\Omega_0$
and an initial condition at $t=0$.
(We keep the same latter~$u$ for the solutions
transformed to~$\Omega_0$.)
More precisely, the weak formulation~\eqref{heat.weak} is equivalent to
\begin{equation}\label{heat.weak.straight}
  \big\langle v,u'(t)\big\rangle
  + Q_\theta\big(v,u(t)\big)
  - E_1 \big( v,u(t)\big)_{\sii(\Omega_0)} = 0
\end{equation}
for each $v \in H_0^1(\Omega_0)$ and a.e.~$t\in[0,\infty)$,
with $u(0) = u_0 \in \sii(\Omega_0)$.
Here $\langle\cdot,\cdot\rangle$
denotes the pairing of $H_0^1(\Omega_0)$ and $H^{-1}(\Omega_0)$.
We know that the transformed solution~$u$
belongs to $C^0\big([0,\infty),\sii(\Omega_0)\big)$
by the semigroup theory.

\subsection{Changing the time}
%
The main idea is to adapt the method of self-similar solutions
used in the case of the heat equation in the whole Euclidean space
by Escobedo and Kavian~\cite{Escobedo-Kavian_1987}
to the present problem.
We perform the self-similarity transformation
in the first (longitudinal) space variable only,
while keeping the other (transverse) space variables unchanged.

More precisely, we consider a unitary transformation~$\tilde{U}$
on $\sii(\Omega_0)$ which associates to every solution
$
  u \in \sii_\mathrm{loc}\big((0,\infty),dt;\sii(\Omega_0,dx)\big)
$
of~\eqref{heat.weak.straight}
a self-similar solution~$\tilde{u}:=\tilde{U}u$
in a new $s$-time weighted space
$
  \sii_\mathrm{loc}\big((0,\infty),e^s ds;\sii(\Omega_0,dy)\big)
$
via~\eqref{SST}.
The inverse change of variables is given by
$$
  u(x_1,x_2,x_3,t)
  = (t+1)^{-1/4} \, \tilde{u}\big((t+1)^{-1/2}x_1,x_2,x_3,\log(t+1)\big)
  \,.
$$
When evolution is posed in that context,
$y=(y_1,y_2,y_3)$ plays the role of space variable and~$s$ is the new time.
One can check that, in the new variables,
the evolution is governed by~\eqref{heat.similar}.

More precisely, the weak formulation~\eqref{heat.weak.straight}
transfers to
\begin{equation}\label{heat.weak.similar}
  \big\langle
  \tilde{v}, \tilde{u}'(s)
  -\mbox{$\frac{1}{2}$} \, y_1 \;\! \partial_1\tilde{u}(s)
  \big\rangle
  + \tilde{Q}_{s}\big(\tilde{v},\tilde{u}(s)\big)
  - E_1 \, e^s \, \big(\tilde{v},\tilde{u}(s)\big)_{\sii(\Omega_0)} = 0
\end{equation}
for each $\tilde{v} \in H_0^1(\Omega_0)$ and a.e.~$s\in[0,\infty)$,
with $\tilde{u}(0) = \tilde{u}_0 := \tilde{U} u_0 = u_0$.
Here~$\tilde{Q}_{s}(\cdot,\cdot)$ denotes the sesquilinear form
associated with
\begin{align*}
  \tilde{Q}_{s}[\tilde{u}] &:=
  \|\partial_1\tilde{u}-\sigma_s\,\partial_\tau \tilde{u}\|_{\sii(\Omega_0)}^2
  + e^s \, \|\nabla'\tilde{u}\|_{\sii(\Omega_0)}^2
  - \frac{1}{4} \, \|\tilde{u}\|_{\sii(\Omega_0)}^2
  \,,
  \\
  \tilde{u} \in \Dom(\tilde{Q}_{s}) &:= H_0^1(\Omega_0)
  \,,
\end{align*}
where~$\sigma_s$ has been introduced in~\eqref{sigma}.

Note that the operator~$\tilde{H}_s$ in $\sii(\Omega_0)$
associated with the form~$\tilde{Q}_s$
has $s$-time-dependent coefficients,
which makes the problem different from the whole-space case.
In particular, the twisting represented by
the function~\eqref{sigma} becomes more and more
``localized'' in a neighbourhood of the origin $y_1=0$ for large time~$s$.

\subsection{The natural weighted space}
%
Since~$\tilde{U}$ acts as a unitary transformation on $\sii(\Omega_0)$,
it preserves the space norm of solutions
of~\eqref{heat.weak.straight} and~\eqref{heat.weak.similar}, \ie,
\begin{equation}\label{preserve}
  \|u(t)\|_{\sii(\Omega_0)}=\|\tilde{u}(s)\|_{\sii(\Omega_0)}
  \,.
\end{equation}
This means that we can analyse the asymptotic time behaviour
of the former by studying the latter.

However, the natural space to study the evolution~\eqref{heat.weak.similar}
is not $\sii(\Omega_0)$ but rather the weighted space~\eqref{weight}.
For $k \in \Int$, we define
$$
  \mathcal{H}_k :=
  \sii\big(\Omega_0,K^k(y_1) \, dy_1 dy_2 dy_3\big)
  \,.
$$
Hereafter we abuse the notation a bit by denoting by~$K$,
initially introduced as a function on~$\Omega_\theta$ in~\eqref{weight},
the analogous function on~$\Real$ too.
Note that~$K^{-1/2}$ is the first eigenfunction
of the harmonic-oscillator Hamiltonian
\begin{equation}\label{oscillator}
  h := -\frac{d^2}{dy_1^2} + \frac{1}{16} \, y_1^2
  \qquad \mbox{in} \qquad \sii(\Real)
\end{equation}
(\ie\ the Friedrichs extension
of this operator initially defined on $C_0^\infty(\Real)$).
The advantage of reformulating~\eqref{heat.weak.similar}
in~$\mathcal{H}_1$ instead of $\mathcal{H}_0 = \sii(\Omega_0)$
lies in the fact that then
the governing elliptic operator has compact resolvent,
as we shall see below (\cf~Proposition~\ref{Prop.compact}).

Let us also introduce the weighted Sobolev space
$$
  \mathcal{H}_k^1 :=
  H_0^1\big(\Omega_0,K^{k}(y_1) \, dy_1 dy_2 dy_3\big)
  \,,
$$
defined as the closure of $C_0^\infty(\Omega_0)$
with respect to the norm
$
  (\|\cdot\|_{\Hilbert_k}^2 + \|\nabla\cdot\|_{\Hilbert_k}^2)^{1/2}
$.
Finally, we denote by $\mathcal{H}_k^{-1}$
the dual space to $\Hilbert_k^1$.

\subsection{The evolution in the weighted space}
%
We want to reconsider~\eqref{heat.similar} as a parabolic problem
posed in the weighted space~$\Hilbert_1$ instead of~$\Hilbert_0$.
We begin with a formal calculation.
Choosing $\tilde{v}(y) := K(y_1) v(y)$
for the test function in~\eqref{heat.weak.similar},
where $v \in C_0^\infty(\Omega_0)$ is arbitrary,
we can formally cast~\eqref{heat.weak.similar}
into the form
\begin{equation}\label{heat.weak.weighted}
  \big\langle v, \tilde{u}'(s) \big\rangle
  + a_s\big(v,\tilde{u}(s)\big) = 0
  \,.
\end{equation}
Here $\langle\cdot,\cdot\rangle$
denotes the pairing of $\mathcal{H}_1^1$ and $\mathcal{H}_1^{-1}$,
and
\begin{align*}
  a_s(v,\tilde{u}) := & \
  \big(\partial_1 v - \sigma_s \, \partial_\tau v,
  \partial_1 \tilde{u} - \sigma_s \, \partial_\tau \tilde{u}
  \big)_{\mathcal{H}_1}
  + e^s \, \big(\nabla' v,\nabla'\tilde{u}\big)_{\Hilbert_1}
  \\
  & \
  - E_1 \, e^s \, \big(v,\tilde{u}\big)_{\mathcal{H}_1}
  - \frac{1}{2} \, \big(y_1 \;\! v,
  \sigma_s \, \partial_\tau \tilde{u}\big)_{\mathcal{H}_1}
  - \frac{1}{4} \, \big(v,\tilde{u}\big)_{\Hilbert_1}
  \,.
\end{align*}
Note that~$a_s$ is not a symmetric form.

Of course, the formulae are meaningless in general,
because the solution~$\tilde{u}(s)$ and its derivative~$\tilde{u}'(s)$
may not belong to $\mathcal{H}_1^1$ and $\mathcal{H}_1^{-1}$, respectively.
We therefore proceed conversely by showing that~\eqref{heat.weak.weighted}
is actually well posed in~$\mathcal{H}_1$
and that the solution solves~\eqref{heat.weak.similar} too.
As for the former, we have:
\begin{Proposition}\label{Prop.Lions}
For any $u_0 \in \Hilbert_1$,
there exists a unique function~$\tilde{u}$ such that
$$
  \tilde{u} \in \sii_\mathrm{loc}\big((0,\infty);\Hilbert_1^1\big)
  \cap C^0\big([0,\infty);\Hilbert_1\big)
  \,, \qquad
  \tilde{u}' \in \sii_\mathrm{loc}\big((0,\infty);\Hilbert_1^{-1}\big)
  \,,
$$
and it satisfies~\eqref{heat.weak.weighted}
for each $v \in \Hilbert_1^1$ and a.e.\ $s\in[0,\infty)$,
and $\tilde{u}(0)=u_0$.
\end{Proposition}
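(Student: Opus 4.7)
My plan is to identify the statement as a direct application of the classical variational theory of J.-L.~Lions for parabolic equations governed by a time-dependent family of non-symmetric sesquilinear forms on a Gelfand triple. Here the triple is
\[
  \mathcal{H}_1^1 \hookrightarrow \mathcal{H}_1 \hookrightarrow \mathcal{H}_1^{-1},
\]
with dense and continuous embeddings built into the definition of the weighted Sobolev space. Lions' abstract theorem will deliver exactly the regularity stated in the proposition as soon as we check, on each bounded interval $[0,T]$, that $s\mapsto a_s(u,v)$ is measurable, that $a_s$ is uniformly bounded on $\mathcal{H}_1^1 \times \mathcal{H}_1^1$, and that $\Re a_s$ satisfies a G{\aa}rding-type coercivity estimate on $\mathcal{H}_1^1$. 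Measurability is immediate because the coefficients $\sigma_s$, $y_1\sigma_s$ and $e^s$ depend smoothly on $s$, and boundedness reduces to a routine Cauchy-Schwarz estimate using \eqref{a-estimate} and $\|\sigma_s\|_{L^\infty(\Real)} = e^{s/2}\|\dot\theta\|_{L^\infty(\Real)}$.

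The substantive step is the G{\aa}rding inequality. Setting $v = \tilde u = u$ one obtains
\[
  \Re a_s(u,u) = \|\partial_1 u - \sigma_s \partial_\tau u\|_{\mathcal{H}_1}^2 + e^s \|\nabla' u\|_{\mathcal{H}_1}^2 - E_1\, e^s \|u\|_{\mathcal{H}_1}^2 - \tfrac{1}{2}\Re(y_1 u, \sigma_s \partial_\tau u)_{\mathcal{H}_1} - \tfrac{1}{4}\|u\|_{\mathcal{H}_1}^2.
\]
I would then mimic step~\emph{4} of the proof of Theorem~\ref{Thm.Hardy}: the elementary Cauchy inequality together with \eqref{a-estimate} converts the first square into a lower bound $\epsilon\|\partial_1 u\|_{\mathcal{H}_1}^2$ at the price of a small multiple of $e^s\|\nabla' u\|_{\mathcal{H}_1}^2$, and a slice-wise application of the Poincar\'e inequality \eqref{Poincare} (weighted by $K$ in $y_1$) absorbs the $-E_1 e^s\|u\|_{\mathcal{H}_1}^2$ contribution, leaving a positive multiple of $\|\nabla' u\|_{\mathcal{H}_1}^2$. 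The asymmetric cross term $-\tfrac{1}{2}\Re(y_1 u, \sigma_s \partial_\tau u)_{\mathcal{H}_1}$ is the genuinely new ingredient and is controlled by the \emph{uniform} bound
\[
  \|y_1\, \sigma_s\|_{L^\infty(\Real)} \ \leq\ \|\dot\theta\|_{L^\infty(\Real)} \sup_{z \in \supp\dot\theta}|z|,
\]
which holds precisely because $\dot\theta$ has compact support (the factors $e^{s/2}$ and $e^{-s/2}$ cancel), and is then absorbed via Young's inequality into a small fraction of $\|\nabla' u\|_{\mathcal{H}_1}^2$ plus a multiple of $\|u\|_{\mathcal{H}_1}^2$. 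One thereby obtains, on every $[0,T]$,
\[
  \Re a_s(u,u) + C\,\|u\|_{\mathcal{H}_1}^2 \ \geq\ \alpha\, \|u\|_{\mathcal{H}_1^1}^2
\]
with constants $\alpha>0$, $C<\infty$ independent of $s\in[0,T]$.

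With these three ingredients in hand, the Lions-Magenes construction (Galerkin approximation followed by the energy estimate derived from the G{\aa}rding bound) will furnish a unique $\tilde u\in \sii(0,T;\mathcal{H}_1^1)$ with $\tilde u'\in \sii(0,T;\mathcal{H}_1^{-1})$ satisfying \eqref{heat.weak.weighted} and matching the prescribed initial condition for any $u_0\in\mathcal{H}_1$; gluing over $T\to\infty$ yields the global regularity claimed in the statement, and the continuity $\tilde u\in C^0([0,\infty);\mathcal{H}_1)$ is the standard consequence of the Lions-Magenes interpolation lemma for Gelfand triples. The principal obstacle is precisely the coercivity step: one must simultaneously tame the non-symmetric drift introduced by passage from $\sii(\Omega_0)$ to the weighted space $\mathcal{H}_1$ and the coefficient $e^s$ blowing up with the self-similar time. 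The decisive point is that the compact-support hypothesis on $\dot\theta$ makes $y_1\sigma_s$ uniformly bounded in $s$, while the slice-wise Poincar\'e inequality pits $e^s\|\nabla'u\|_{\mathcal{H}_1}^2$ against $E_1 e^s\|u\|_{\mathcal{H}_1}^2$ and thus neutralises the large prefactor.
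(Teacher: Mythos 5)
Your proposal is correct, follows the same overall strategy as the paper (apply Lions' abstract theorem for non-autonomous parabolic problems on the Gelfand triple $\mathcal{H}_1^1 \hookrightarrow \mathcal{H}_1 \hookrightarrow \mathcal{H}_1^{-1}$ after checking measurability, boundedness and G{\aa}rding coercivity on $[0,s_0]$), and your treatment of the Cauchy/Poincar\'e part of the coercivity estimate is identical to the paper's. The one place where you genuinely diverge is the asymmetric cross term $\tfrac12\Re\big(y_1 v,\sigma_s\,\partial_\tau v\big)_{\mathcal{H}_1}$. The paper disposes of it by observing that it \emph{vanishes identically} for $v\in C_0^\infty(\Omega_0)$: the coefficient $y_1\,\sigma_s(y_1)\,K(y_1)$ depends only on the longitudinal variable, $\Re(\bar v\,\partial_\tau v)=\tfrac12\partial_\tau|v|^2$, and $\tau=(y_3,-y_2)$ is divergence-free, so the transverse integral is zero. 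You instead absorb the term by the uniform bound $\|y_1\sigma_s\|_{L^\infty(\Real)}\le\|\dot\theta\|_{L^\infty(\Real)}\sup_{z\in\supp\dot\theta}|z|$ (which you correctly establish by the change of variables $z=e^{s/2}y_1$, relying on the compact support of $\dot\theta$) together with Young's inequality. Both routes give a valid G{\aa}rding estimate with constants uniform on $[0,s_0]$, but the paper's observation is sharper (it holds for any $C^1$ twist with bounded derivative, not only for compactly supported $\dot\theta$) and shorter. An incidental bonus of your regrouping $(v,\,y_1\sigma_s\,\partial_\tau u)$ is that you do not need the paper's preliminary embedding $\|y_1 v\|_{\mathcal{H}_1}\le 4\|\partial_1 v\|_{\mathcal{H}_1}$ merely to make the cross term in $a_s$ well defined, although that estimate remains important elsewhere (e.g.\ for the compactness of $T_s^{(0)}$).
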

\begin{proof}
First of all, let us show that~$a_s$ is well-defined
as a sesquilinear form with domain $\Dom(a_s) := \Hilbert_1^1$
for any fixed $s \in [0,\infty)$.
In view of the boundedness of~$\sigma_s$ (for every finite~$s$)
and the estimate~\eqref{a-estimate},
it only requires to check that $y_1 v \in \Hilbert_1$
provided $v \in \Hilbert_1^1$.
Let $v \in C_0^\infty(\Omega_0)$. Then
\begin{align*}
  \|y_1 v\|_{\Hilbert_1}^2
  & = 2 \int_{\Omega_0} y_1 \, |v(y)|^2 \, \frac{d K(y_1)}{d y_1} \, dy
  \\
  & = -2 \int_{\Omega_0} \Big\{
  |v(y)|^2 + 2 \, y_1 \, \Re\big[\bar{v}(y)\partial_1 v(y)\big]
  \Big\} \, K(y_1) \, dy
  \\
  & \leq 4  \, \|y_1 \;\! v\|_{\Hilbert_1} \, \|\partial_1 v\|_{\Hilbert_1}
  \,.
\end{align*}
Consequently,
\begin{equation}\label{embed}
  \|y_1 v\|_{\Hilbert_1}
  \leq 4 \, \|\partial_1 v\|_{\Hilbert_1}
  \leq 4 \, \|v\|_{\Hilbert_1^1}
  \,.
\end{equation}
By density,
this inequality extends to all $v \in \Hilbert_1^1$.
Hence, $a_s(v,u)$ is well defined for all $s \geq 0$
and $v,u \in \Hilbert_1^1$ (we suppress the tilde over~$u$
in the rest of the proof).
Then the Proposition follows by a theorem of J.~L.~Lions
\cite[Thm.~X.9]{Brezis_FR}
about weak solutions of parabolic equations
with time-dependent coefficients.
We only need to verify its hypotheses:
\smallskip \\
\emph{1.~Measurability.}
The function $s \mapsto a_s(v,u)$ is clearly measurable
on $[0,\infty)$ for all $v,u \in \Hilbert_1^1$,
since it is in fact continuous.
\smallskip \\
\emph{2.~Boundedness.}
Let~$s_0$ be an arbitrary positive number.
Using the boundedness of~$\dot{\theta}$,
the estimates~\eqref{a-estimate} and~\eqref{embed},
it is quite easy to show that there is a constant~$C$,
depending uniquely on~$s_0$, $\|\dot\theta\|_{L^\infty(\Real)}$
and the geometry of~$\omega$ (through~$a$ and~$E_1$), such that
\begin{equation}\label{boundedness}
  |a_s(v,u)| \leq C \, \|v\|_{\Hilbert_1^1} \, \|u\|_{\Hilbert_1^1}
\end{equation}
for all $s \in [0,s_0]$ and $v,u \in \Hilbert_1^1$.
\smallskip \\
\emph{3.~Coercivity.}
Recall that~$a_s$ is not symmetric and that we consider complex
functional spaces. However, since the real and imaginary parts
of the solution~$\tilde{u}$ of~\eqref{heat.weak.weighted}
evolve independently, one may restrict to real-valued
functions~$v$ and~$\tilde{u}$ there.
Alternatively, it is enough to check the coercivity
of the real part of~$a_s$.
We therefore need to show that there are positive constants~$\epsilon$ and~$C$
such that the inequality
\begin{equation}\label{coercivity}
  \Re\{ a_s[v] \}
  \geq \epsilon \, \|v\|_{\Hilbert_1^1}^2
  - C \, \|v\|_{\Hilbert_1}^2
\end{equation}
holds for all $v \in \Hilbert_1^1$ and $s\in[0,s_0]$,
where $a_s[v] := a_s(v,v)$.
We have
\begin{multline}\label{begin.estimate}
  \Re\{ a_s[v] \} =
  \|\partial_1 v - \sigma_s \, \partial_\tau v \|_{\mathcal{H}_1}^2
  + e^s \, \|\nabla' v\|_{\mathcal{H}_1}^2
  - E_1 \, e^s \, \|v\|_{\mathcal{H}_1}^2
  - \frac{1}{4} \, \|v\|_{\mathcal{H}_1}^2
  \\
  - \frac{1}{2} \, \Re\,(y_1 \;\! v,
  \sigma_s \, \partial_\tau v)_{\mathcal{H}_1}
\end{multline}
for all $v \in \Hilbert_1^1$.
For every $v \in C_0^\infty(\Omega_0)$,
an integration by parts shows that:
\begin{equation}\label{mixed.term}
  \Re\,(y_1 \;\! v,\sigma_s \, \partial_\tau v)_{\mathcal{H}_1} = 0
  \,;
\end{equation}
by density, this result extends to all $v \in \Hilbert_1^1$.
Hence, the mixed term in~\eqref{begin.estimate} vanishes.
We continue with estimating the first term
on the right hand side of~\eqref{begin.estimate}:
\begin{align*}
  \|\partial_1 v - \sigma_s \, \partial_\tau v \|_{\mathcal{H}_1}^2
  &\geq \epsilon \, \|\partial_1 v\|_{\mathcal{H}_1}^2
  - \frac{\epsilon}{1-\epsilon} \, \|\sigma_s \, \partial_\tau v \|_{\mathcal{H}_1}^2
  \\
  &\geq \epsilon \, \|\partial_1 v\|_{\mathcal{H}_1}^2
  - \frac{\epsilon}{1-\epsilon} \, e^{s} \, \|\dot\theta\|_{L^\infty(\Real)}
  \, a^2 \, \|\nabla' v \|_{\mathcal{H}_1}^2
\end{align*}
valid for every $\epsilon\in(0,1)$ and $v \in \Hilbert_1^1$.
Here the second inequality follows from the definition
of~$\sigma_s$ in~\eqref{sigma} and the estimate~\eqref{a-estimate}.
Using~\eqref{Poincare} with help of Fubini's theorem,
we therefore have
\begin{multline*}
  \|\partial_1 v - \sigma_s \, \partial_\tau v \|_{\mathcal{H}_1}^2
  + (1-\epsilon) \, e^s \, \|\nabla' v\|_{\mathcal{H}_1}^2
  \\
  \geq \epsilon \, \|\partial_1 v\|_{\mathcal{H}_1}^2
  + E_1 \, e^s \left(
  1-\epsilon-\frac{\epsilon}{1-\epsilon}
  \, \|\dot\theta\|_{L^\infty(\Real)} \, a^2
  \right)
  \|v\|_{\mathcal{H}_1}^2
\end{multline*}
provided that~$\epsilon$ is sufficiently small
(so that the expression in the round brackets is positive).
Putting this inequality into~\eqref{begin.estimate},
recalling~\eqref{mixed.term}
and using the trivial bounds $1 \leq e^s \leq e^{s_0}$
for $s \in [0,s_0]$,
we conclude with
$$
  \Re\{ a_s[v] \} \geq \epsilon \, \|\nabla v\|_{\mathcal{H}_1}^2
  - \left[E_1 \, e^{s_0} \left(
  \epsilon+\frac{\epsilon}{1-\epsilon}
  \, \|\dot\theta\|_{L^\infty(\Real)} \, a^2
  \right)
  + \frac{1}{4}
  \right] \|v\|_{\mathcal{H}_1}^2
  \,,
$$
valid for all sufficiently small~$\epsilon$
and all real-valued $v \in \Hilbert_1^1$.
It is clear that the last inequality can be cast
into the form~\eqref{coercivity},
with a constant~$\epsilon$
depending on~$a$ and $\|\dot\theta\|_{L^\infty(\Real)}$,
and a constant~$C$ depending on
$s_0$, $\|\dot\theta\|_{L^\infty(\Real)}$
and the geometry of~$\omega$ (through $a$ and $E_1$).
\smallskip \\
Now it follows from~\cite[Thm.~X.9]{Brezis_FR}
that the unique solution~$\tilde{u}$ of~\eqref{heat.weak.weighted}
satisfies
$$
  \tilde{u} \in \sii\big((0,s_0);\Hilbert_1^1\big)
  \cap C^0\big([0,s_0];\Hilbert_1\big)
  \,, \qquad
  \tilde{u}' \in \sii\big((0,s_0);\Hilbert_1^{-1}\big)
  \,.
$$
Since~$s_0$ is an arbitrary positive number here,
we actually get a global continuous solution
in the sense that
$
  \tilde{u} \in
  C^0\big([0,\infty);\Hilbert_1\big)
$.
\end{proof}
\begin{Remark}
As a consequence of~\eqref{boundedness}, \eqref{coercivity}
and the Lax-Milgram theorem,
it follows that the form~$a_s$ is closed
on its domain $\Hilbert_1^1$.
\end{Remark}

Now we are in a position to prove a partial equivalence
of evolutions~\eqref{heat.weak.similar} and~\eqref{heat.weak.weighted}.
\begin{Proposition}
Let $u_0 \in \Hilbert_1$.
Let~$\tilde{u}$ be the unique solution to~\eqref{heat.weak.weighted}
for each $v \in \Hilbert_1^1$ and a.e.\ $s\in[0,\infty)$,
subject to the initial condition $\tilde{u}(0)=u_0$,
that is specified in Proposition~\ref{Prop.Lions}.
Then~$\tilde{u}$ is also the unique solution to~\eqref{heat.weak.similar}
for each $\tilde{v} \in \Hilbert_0^1$ and a.e.\ $s\in[0,\infty)$,
subject to the same initial condition.
\end{Proposition}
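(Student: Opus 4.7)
My plan is to invert the formal substitution that was used just above the proposition to pass from \eqref{heat.weak.similar} to \eqref{heat.weak.weighted}. Given any $\tilde{v}\in H_0^1(\Omega_0)$, I set $v:=K^{-1}\tilde{v}$ and use it as a test function in \eqref{heat.weak.weighted}. Because $K^{-1}(y_1)=e^{-y_1^2/4}\leq 1$ and both $y_1 K^{-1}(y_1)$ and $y_1^2 K^{-1}(y_1)$ are bounded on $\Real$, a direct calculation yields $\|v\|_{\Hilbert_1}\leq \|\tilde{v}\|_{\sii(\Omega_0)}$ together with $\|\nabla v\|_{\Hilbert_1}\leq C\,\|\tilde{v}\|_{H_0^1(\Omega_0)}$. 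Hence $\tilde{v}\mapsto K^{-1}\tilde{v}$ defines a bounded linear map from $H_0^1(\Omega_0)$ into $\Hilbert_1^1$ which sends $C_0^\infty(\Omega_0)$ into itself.

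I would then evaluate $a_s(K^{-1}\tilde{v},\tilde{u}(s))$ term by term. Since $K$ depends only on $y_1$ and $K\cdot K^{-1}=1$, every inner product of the form $(v,\cdot)_{\Hilbert_1}$, $(\nabla' v,\cdot)_{\Hilbert_1}$, $(\partial_\tau v,\cdot)_{\Hilbert_1}$ and $(y_1 v,\sigma_s\partial_\tau \tilde{u})_{\Hilbert_1}$ reduces automatically to its unweighted counterpart $(\tilde{v},\cdot)_{\sii(\Omega_0)}$. The only nontrivial contribution comes from the identity $\partial_1 v = K^{-1}(\partial_1\tilde{v}-\tfrac{y_1}{2}\tilde{v})$, which produces an extra $-\tfrac{1}{2}(y_1\tilde{v},\partial_1\tilde{u}-\sigma_s\partial_\tau\tilde{u})_{\sii(\Omega_0)}$ inside the leading term of $a_s$. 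The $\sigma_s$-part of this extra term cancels precisely against the pre-existing mixed term $-\tfrac{1}{2}(y_1 v,\sigma_s\partial_\tau\tilde{u})_{\Hilbert_1}$ in $a_s$, and the outcome is the algebraic identity
\[
  a_s(K^{-1}\tilde{v},\tilde{u}(s))
  = \tilde{Q}_s(\tilde{v},\tilde{u}(s))
  - E_1\,e^s\,(\tilde{v},\tilde{u}(s))_{\sii(\Omega_0)}
  - \tfrac{1}{2}(y_1\tilde{v},\partial_1\tilde{u}(s))_{\sii(\Omega_0)}.
\]

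The main obstacle is to identify the two pairings of the time derivative, $\langle K^{-1}\tilde{v},\tilde{u}'(s)\rangle_{\Hilbert_1^1,\Hilbert_1^{-1}}$ and $\langle \tilde{v},\tilde{u}'(s)\rangle_{H_0^1,H^{-1}}$, which a priori live in different dual spaces. Restricting first to $\tilde{v}\in C_0^\infty(\Omega_0)$, the two pairings coincide distributionally because $\tfrac{d}{ds}(\tilde{v},\tilde{u}(s))_{\sii(\Omega_0)}=\tfrac{d}{ds}(K^{-1}\tilde{v},\tilde{u}(s))_{\Hilbert_1}$. Next, the term $y_1\partial_1\tilde{u}(s)$ belongs to $H^{-1}(\Omega_0)$ for a.e.\ $s$: since $y_1^2\leq C\,K$, we have $y_1\tilde{u}(s)\in \sii(\Omega_0)$, and the formula $y_1\partial_1\tilde{u}(s)=\partial_1(y_1\tilde{u}(s))-\tilde{u}(s)$ realizes it as a distribution of order at most one in the first variable. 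Thus the right-hand side of the displayed identity extends continuously in $\tilde{v}\in H_0^1(\Omega_0)$, and density of $C_0^\infty(\Omega_0)$ in $H_0^1(\Omega_0)$ together with the continuity of $\tilde{v}\mapsto K^{-1}\tilde{v}$ established above promotes \eqref{heat.weak.weighted} to \eqref{heat.weak.similar} for all $\tilde{v}\in H_0^1(\Omega_0)$. Finally, uniqueness for \eqref{heat.weak.similar} follows by applying the inverse self-similarity transformation: any such $\tilde{u}$ corresponds to a weak solution of \eqref{heat.weak.straight} in the class $C^0\big([0,\infty);\sii(\Omega_0)\big)$, and such solutions are unique by the semigroup theory recalled in Section~\ref{Sec.straightening}.
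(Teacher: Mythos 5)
Your proof is correct and takes essentially the same approach as the paper: inverting the substitution by choosing $v=K^{-1}\tilde{v}$ as test function in \eqref{heat.weak.weighted} and extending by density from $C_0^\infty(\Omega_0)$ to $\Hilbert_0^1$. The paper's own proof compresses the verification into a single sentence ("one easily checks"); you supply exactly the missing algebra, including the cancellation of the $\sigma_s$-parts of the mixed term, the identification of the two time-derivative pairings on $C_0^\infty(\Omega_0)$, and the observation that $y_1\partial_1\tilde{u}(s)\in H^{-1}(\Omega_0)$ via $y_1\tilde{u}(s)\in\sii(\Omega_0)$.
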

\begin{proof}
Choosing $v(y) := K(y_1)^{-1} \;\! \tilde{v}(y)$
for the test function in~\eqref{heat.weak.weighted},
where $\tilde{v} \in C_0^\infty(\Omega_0)$ is arbitrary,
one easily checks that~$\tilde{u}$ satisfies~\eqref{heat.weak.similar}
for each $\tilde{v} \in C_0^\infty(\Omega_0)$ and a.e.\ $s\in[0,\infty)$.
By density, this result extends to all $\tilde{v} \in \Hilbert_0^1$.
\end{proof}
%

\subsection{Reduction to a spectral problem}
%
As a consequence of the previous subsection,
reducing the space of initial data,
we can focus on the asymptotic time behaviour
of the solutions to~\eqref{heat.weak.weighted}.
Choosing $v := \tilde{u}(s)$ in~\eqref{heat.weak.weighted}
(and possibly combining with the conjugate version
of the equation if we allow non-real initial data),
we arrive at the identity
\begin{equation}\label{formal}
  \frac{1}{2} \frac{d}{ds} \|\tilde{u}(s)\|_{\mathcal{H}_{1}}^2
  = - J_s^{(1)}[\tilde{u}(s)]
  \,,
\end{equation}
where $J_s^{(1)}[\tilde{u}] := \Re\{a_s[\tilde{u}]\}$,
$\tilde{u} \in \Dom(J_s^{(1)}) := \Dom(a_s) = \Hilbert_1^1$
(independent of~$s$).
Recalling~\eqref{begin.estimate} and~\eqref{mixed.term},
we have
\begin{equation*}
  J_s^{(1)}[\tilde{u}] =
  \|\partial_1\tilde{u}-\sigma_s\,\partial_\tau \tilde{u}\|_{\Hilbert_1}^2
  + e^s \, \|\nabla'\tilde{u}\|_{\Hilbert_1}^2
  - E_1 \, e^s \, \|\tilde{u}\|_{\Hilbert_1}^2
  - \frac{1}{4} \, \|\tilde{u}\|_{\Hilbert_1}^2
  \,.
\end{equation*}
As a consequence of~\eqref{boundedness}, \eqref{coercivity}
and the Lax-Milgram theorem,
we know that $J_s^{(1)}$ is closed on its domain~$\Hilbert_1^1$.
It remains to analyse the coercivity of the form~$J_s^{(1)}$.

More precisely, as usual for energy estimates,
we replace the right hand side of~\eqref{formal}
by the spectral bound, valid for each fixed $s \in [0,\infty)$,
\begin{equation}\label{spectral.reduction}
  \forall \tilde{u} \in \Hilbert_1^1 \;\!, \qquad
  J_s^{(1)}[\tilde{u}]
  \geq \mu(s) \, \|\tilde{u}\|_{\Hilbert_1}^2
  \,,
\end{equation}
where~$\mu(s)$ denotes the lowest point in the spectrum of the self-adjoint
operator~$T_s^{(1)}$ in~$\Hilbert_1$ associated with~$J_s^{(1)}$.
Then~\eqref{formal} together with~\eqref{spectral.reduction} implies
the exponential bound
\begin{equation}\label{spectral.reduction.integral}
  \forall s \in [0,\infty) \;\!, \qquad
  \|\tilde{u}(s)\|_{\Hilbert_1}
  \leq \|\tilde{u}_0\|_{\Hilbert_1} \,
  e^{-\int_0^s \mu(r) dr}
  \,,
\end{equation}
In this way, the problem is reduced to a spectral analysis
of the family of operators $\{T_s^{(1)}\}_{s \geq 0}$.

\subsection{Removing the weight}
%
In order to investigate the operator~$T_s^{(1)}$ in~$\Hilbert_1$,
we first map it into a unitarily equivalent operator~$T_s^{(0)}$
in~$\Hilbert_0$.
This can be carried out via the unitary transform
$\mathcal{U}_{0}:\Hilbert_1\to\Hilbert_0$ defined by
$$
  (\mathcal{U}_{0}u)(y):=K^{1/2}(y_1)\,u(y)
  \,.
$$
We define $T_s^{(0)} := \mathcal{U}_0 T_s^{(1)} \mathcal{U}_0^{-1}$,
which is the self-adjoint operator associated with
the quadratic form $J_s^{(0)}[v] := J_s^{(1)}[\mathcal{U}_0^{-1}v]$,
$v \in \Dom(J_s^{(0)}) := \mathcal{U}_0\,\Dom(J_s^{(1)})$.
A straightforward calculation yields
\begin{equation}\label{J0.form}
  J_s^{(0)}[v]
  = \|\partial_1 v-\sigma_s\,\partial_\tau v\|_{\Hilbert_0}^2
  + \frac{1}{16} \, \|y_1 v\|_{\Hilbert_0}^2
  + e^s \, \|\nabla'v\|_{\Hilbert_0}^2
  - E_1 \;\! e^s \, \|v\|_{\Hilbert_0}^2
  \,.
\end{equation}

It is easy to verify that the domain of~$J_s^{(0)}$
coincides with the closure of $C_0^\infty(\Omega_0)$
with respect to the norm
$
  (\|\cdot\|_{\Hilbert_0}^2 + \|\nabla\cdot\|_{\Hilbert_0}^2
  +\|y_1\cdot\|_{\Hilbert_0}^2)^{1/2}
$.
In particular, $\Dom(J_s^{(0)})$~is independent of~$s$.
Moreover, since this closure is compactly embedded in~$\Hilbert_0$
(one can employ the well-known fact that~\eqref{oscillator}
has purely discrete spectrum, which essentially
uses the fact that the form domain of~$h$
is compactly embedded in $\sii(\Real)$),
it follows that $T_s^{(0)}$ (and therefore $T_s^{(1)}$)
is an operator with compact resolvent.
In particular, we have:
\begin{Proposition}\label{Prop.compact}
$T_s^{(1)} \simeq T_s^{(0)}$
have purely discrete spectrum for all $s\in[0,\infty)$.
\end{Proposition}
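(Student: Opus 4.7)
Since $\mathcal{U}_0$ is unitary, the equivalence $T_s^{(1)}\simeq T_s^{(0)}$ reduces the problem to proving the claim for $T_s^{(0)}$. For a semibounded self-adjoint operator associated with a closed form, having compact resolvent is equivalent to the form domain, equipped with its graph norm, being compactly embedded in the ambient Hilbert space. The form domain $\Dom(J_s^{(0)})$ has already been identified in the excerpt as the closure $\mathcal{D}$ of $C_0^\infty(\Omega_0)$ with respect to $(\|\cdot\|_{\Hilbert_0}^2+\|\nabla\cdot\|_{\Hilbert_0}^2+\|y_1\cdot\|_{\Hilbert_0}^2)^{1/2}$, notably independent of $s$.

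The plan is to recognize $\mathcal{D}$ as the form domain of a reference operator with known spectral properties. Consider
\[
  h_{\mathrm{ref}} \, := \, h \otimes I \, + \, I \otimes (-\Delta_D^\omega) \qquad\mbox{on}\qquad \Hilbert_0 \simeq \sii(\Real) \otimes \sii(\omega),
\]
where $h$ is the harmonic oscillator~\eqref{oscillator}. Its associated closed form is exactly
\[
  j^{\mathrm{ref}}[v] \, := \, \|\partial_1 v\|_{\Hilbert_0}^2 + \mbox{$\frac{1}{16}$}\,\|y_1 v\|_{\Hilbert_0}^2 + \|\nabla' v\|_{\Hilbert_0}^2,
\]
with form domain $\mathcal{D}$. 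I would then verify that $h_{\mathrm{ref}}$ has compact resolvent by combining (i) the well-known compactness of the resolvent of~$h$ on $\sii(\Real)$, and (ii) the Rellich--Kondrachov compactness of the embedding $H_0^1(\omega)\hookrightarrow\sii(\omega)$, which holds for any bounded open set~$\omega$ without regularity hypotheses on~$\partial\omega$ and yields compactness of the resolvent of~$-\Delta_D^\omega$. The spectrum of the tensor sum is then the set of pairwise sums~$\{\lambda_i+\mu_j\}$, which accumulates only at infinity, so $h_{\mathrm{ref}}$ has compact resolvent and $\mathcal{D}$ is compactly embedded in~$\Hilbert_0$.

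It remains to check that $J_s^{(0)}$ and $j^{\mathrm{ref}}$ define equivalent graph norms on~$\mathcal{D}$, so that the compactness of the embedding passes from $j^{\mathrm{ref}}$ to $J_s^{(0)}$. Using the elementary inequality $\|a-b\|^2 \geq (1-\epsilon)\|a\|^2 - (\epsilon^{-1}-1)\|b\|^2$ with $\epsilon \in (0,1)$ on the mixed term $\|\partial_1 v - \sigma_s\,\partial_\tau v\|_{\Hilbert_0}^2$ in~\eqref{J0.form}, together with the pointwise bound~\eqref{a-estimate} and the boundedness of $\sigma_s$ for each fixed~$s$, one obtains constants $c_s, C_s, C_s', C_s'' > 0$ such that
\[
  c_s\,j^{\mathrm{ref}}[v] \, - \, C_s\,\|v\|_{\Hilbert_0}^2 \, \leq \, J_s^{(0)}[v] \, \leq \, C_s'\,j^{\mathrm{ref}}[v] \, + \, C_s''\,\|v\|_{\Hilbert_0}^2
\]
for all $v \in \mathcal{D}$. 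The only mildly subtle step is to choose $\epsilon$ in the lower bound strictly above $\|\sigma_s\|_{L^\infty(\Real)}^2 a^2/(e^s + \|\sigma_s\|_{L^\infty(\Real)}^2 a^2)$, which keeps the coefficient of $\|\nabla' v\|_{\Hilbert_0}^2$ positive after absorbing the mixed term; since this threshold is strictly less than~$1$, an admissible~$\epsilon$ exists for every finite~$s$. The failure of this bound to be uniform as $s\to\infty$ reflects the non-autonomous nature of the problem and is precisely what makes the delicate analysis of later sections necessary, but here, for each fixed~$s$, it delivers the required equivalence of graph norms and completes the proof.
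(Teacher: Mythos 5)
Your proof is correct and follows essentially the same route as the paper: the paper establishes the proposition by observing that the form domain is the closure of $C_0^\infty(\Omega_0)$ in the norm $(\|\cdot\|_{\Hilbert_0}^2+\|\nabla\cdot\|_{\Hilbert_0}^2+\|y_1\cdot\|_{\Hilbert_0}^2)^{1/2}$, which is compactly embedded in $\Hilbert_0$ by the harmonic-oscillator compactness (tacitly combined with the compactness of $H_0^1(\omega)\hookrightarrow\sii(\omega)$). Your explicit construction of the reference tensor-sum operator $h_{\mathrm{ref}}$ and the verification that $J_s^{(0)}$ and $j^{\mathrm{ref}}$ give equivalent graph norms on this common form domain simply flesh out what the paper labels ``easy to verify''.
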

\noindent
Consequently, $\mu(s)$ is the lowest eigenvalue of $T_s^{(1)}$.

\subsection{The asymptotic behaviour of the spectrum}\label{Sec.strong}
%
In order to study the decay rate via~\eqref{spectral.reduction.integral},
we need information about the limit of the eigenvalue~$\mu(s)$
as the time~$s$ tends to infinity.

Since the function~$\sigma_s$ from~\eqref{sigma}
converges in the distributional sense to a multiple of the delta function
supported at zero as~$s\to\infty$,
it is expectable (\cf~\eqref{J0.form}) that the operator~$T_s^{(0)}$
will converge, in a suitable sense,
to the one-dimensional operator~$h$ from~\eqref{oscillator}
with an extra Dirichlet boundary condition at zero.
More precisely, the limiting operator, denoted by~$h_D$,
is introduced as the self-adjoint operator in~$\sii(\Real)$
whose quadratic form acts in the same way as that of~$h$
but has a smaller domain
$$
  \Dom(h_D^{1/2}) :=
  \big\{
  \varphi\in\Dom(h^{1/2})\ |\ \varphi(0)=0
  \big\}
  \,.
$$
Alternatively, the form domain $\Dom(h_D^{1/2})$ is the closure of
$C_0^\infty(\Real\setminus\{0\})$ with respect to the norm
$
  (\|\cdot\|_{\sii(\Real)}^2
  + \|\nabla\cdot\|_{\sii(\Real)}^2
  + \|y_1\cdot\|_{\sii(\Real)}^2)^{1/2}
$.

To make this limit rigorous
($T_s^{(0)}$ and~$h_D$ act in different spaces),
we follow~\cite{Friedlander-Solomyak_2007}
and decompose the Hilbert space~$\Hilbert_0$
into an orthogonal sum
$$
  \Hilbert_0 = \mathfrak{H}_1 \oplus \mathfrak{H}_1^\bot
  \,,
$$
where the subspace~$\mathfrak{H}_1$ consists of functions
of the form $\psi_1(y) = \varphi(y_1)\mathcal{J}_1(y')$.
Recall that~$\mathcal{J}_1$ denotes the positive
eigenfunction of $-\Delta_D^\omega$ corresponding to~$E_1$,
normalized to~$1$ in $\sii(\omega)$.
Given any $\psi \in \Hilbert_0$, we have the decomposition
$\psi = \psi_1 + \phi$ with $\psi_1\in\mathfrak{H}_1$ as above
and $\phi \in \mathfrak{H}_1^\bot$.
The mapping $\pi:\varphi\mapsto\psi_1$
is an isomorphism of $\sii(\Real)$ onto~$\mathfrak{H}_1$.
Hence, with an abuse of notations,
we may identify any operator~$h$ on $\sii(\Real)$
with the operator $\pi h \pi^{-1}$
acting on $\mathfrak{H}_1 \subset \Hilbert_0$.
\begin{Proposition}\label{Prop.strong}
Let $\Omega_\theta$ be twisted with $\theta \in C^1(\Real)$.
Suppose that~$\dot\theta$ has compact support.
Then $T_s^{(0)}$ converges to
$
  h_D \oplus 0^\bot
$
in the strong-resolvent sense as $s \to \infty$, \ie,
for every $F \in \Hilbert_0$,
$$
  \lim_{s \to \infty}
  \left\|
  \big(T_s^{(0)}+1\big)^{-1}F
  - \left[\big(h_D + 1 \big)^{-1} \oplus 0^\bot\right] F
  \right\|_{\Hilbert_0}
  = 0
  \,.
$$
Here~$0^\bot$ denotes the zero operator on the subspace
$\mathfrak{H}_1^\bot \subset \Hilbert_0$.
\end{Proposition}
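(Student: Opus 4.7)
The plan is to deduce the strong-resolvent convergence from the Mosco convergence of the closed quadratic forms $J_s^{(0)}$ on $\Hilbert_0$ to the closed form
$$
J_\infty[v] := \begin{cases}
\|\partial_1 \varphi\|_{\sii(\Real)}^2 + \tfrac{1}{16}\|y_1 \varphi\|_{\sii(\Real)}^2 & \text{if } v = \varphi \otimes \mathcal{J}_1 \text{ with } \varphi \in \Dom(h_D^{1/2}), \\
+\infty & \text{otherwise},
\end{cases}
$$
whose associated self-adjoint operator is $h_D$ on $\mathfrak{H}_1 \simeq \sii(\Real)$ complemented by the Dirichlet (``infinity'') condition on $\mathfrak{H}_1^\bot$ and whose resolvent at $-1$ is precisely the $(h_D+1)^{-1} \oplus 0^\bot$ of the proposition. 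By a classical theorem, Mosco convergence of forms is equivalent to strong-resolvent convergence of the generators, so it suffices to verify the $\Gamma$-$\limsup$ (recovery) and $\Gamma$-$\liminf$ (weak lower semi-continuity) conditions.

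For the recovery step, take $v = \varphi \otimes \mathcal{J}_1$ with $\varphi \in C_0^\infty(\Real\setminus\{0\})$ and set $v_s := v$ for every $s$. The orthogonality $(\mathcal{J}_1, \partial_\tau \mathcal{J}_1)_{\sii(\omega)} = 0$ (integration by parts on $\omega$) and the eigenvalue identity $\|\nabla' \mathcal{J}_1\|_{\sii(\omega)}^2 = E_1$ collapse the form via Fubini to
$$
J_s^{(0)}[v] = \|\partial_1 \varphi\|_{\sii(\Real)}^2 + \tfrac{1}{16}\|y_1 \varphi\|_{\sii(\Real)}^2 + \|\sigma_s \varphi\|_{\sii(\Real)}^2 \, \|\partial_\tau \mathcal{J}_1\|_{\sii(\omega)}^2.
$$
Because $\varphi$ vanishes on a neighbourhood of $0$ and $\dot\theta$ has compact support, the product $\sigma_s(y_1)\varphi(y_1) = e^{s/2}\dot\theta(e^{s/2}y_1)\varphi(y_1)$ vanishes identically as soon as $e^{s/2}\inf|\supp\varphi|$ exceeds $\sup|\supp\dot\theta|$; hence $J_s^{(0)}[v] = J_\infty[v]$ for $s$ large, and a standard density-plus-diagonal argument extends recovery to every $v\in\Dom(J_\infty)$.

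For the liminf step, suppose $v_s \rightharpoonup v$ weakly in $\Hilbert_0$ with $M := \liminf_s J_s^{(0)}[v_s] < \infty$ and pass to a realising subsequence. Writing $v_s = \varphi_s \otimes \mathcal{J}_1 + \phi_s$ with $\phi_s \in \mathfrak{H}_1^\bot$, the Poincar\'e gap $\|\nabla'\phi\|_{\sii(\omega)}^2 \geq E_2\|\phi\|_{\sii(\omega)}^2$ together with $e^s(\|\nabla'v_s\|^2 - E_1\|v_s\|^2) \leq J_s^{(0)}[v_s]$ forces $e^s\|\phi_s\|^2 + e^s\|\nabla'\phi_s\|^2 \leq C$, so $\phi_s \to 0$ strongly in $H_0^1$ and $v = \varphi\otimes\mathcal{J}_1$ for some $\varphi\in\sii(\Real)$. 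Projecting $\partial_1 v_s - \sigma_s\partial_\tau v_s$ onto $\mathcal{J}_1$ in $\sii(\omega)$ gives $\partial_1\varphi_s = (\partial_1 v_s - \sigma_s\partial_\tau v_s, \mathcal{J}_1)_{\sii(\omega)} + \sigma_s(\phi_s, \partial_\tau\mathcal{J}_1)_{\sii(\omega)}$, and the Schwarz inequality combined with $e^s\|\phi_s\|^2 \leq C$ produces the uniform estimate $\|\partial_1\varphi_s\|_{\sii(\Real)}^2 \leq 2J_s^{(0)}[v_s] + C'$. Together with $\|y_1 v_s\|^2 \leq 16\,J_s^{(0)}[v_s]$ this places $\{\varphi_s\}$ in a bounded set of $H^1(\Real)\cap\{y_1\varphi\in\sii(\Real)\}$, and weak lower semi-continuity of the respective norms yields the desired lower bounds on $\|\partial_1\varphi\|_{\sii(\Real)}^2$ and $\|y_1\varphi\|_{\sii(\Real)}^2$.

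The main obstacle --- and precisely the point at which the Hardy-type structure of Section~\ref{Sec.Hardy} becomes indispensable --- is the Dirichlet condition $\varphi(0)=0$. Twisting is essential here: since $\omega$ is not rotationally symmetric, $\|\partial_\tau\mathcal{J}_1\|_{\sii(\omega)}>0$ (the cornerstone content of Lemma~\ref{Lem.cornerstone}). Isolating in $J_s^{(0)}[v_s]$ the $\partial_\tau\mathcal{J}_1$-projection of $\|\partial_1 v_s - \sigma_s\partial_\tau v_s\|^2$, whose principal part is $\|\sigma_s\varphi_s\|^2\|\partial_\tau\mathcal{J}_1\|^2$ perturbed only by terms controlled by the already-proved $H^1$-decay of $\phi_s$, one extracts a uniform bound $\|\sigma_s\varphi_s\|_{\sii(\Real)}^2 \leq C$. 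The substitution $x_1 = e^{s/2}y_1$ rewrites this as
$$
\int_\Real e^{s/2}\,\dot\theta^2(x_1)\,|\varphi_s(e^{-s/2}x_1)|^2\,dx_1 \leq C.
$$
Since $\{\varphi_s\}$ is $H^1$-bounded, Rellich's theorem furnishes a further subsequence with $\varphi_s\to\varphi$ locally uniformly on $\Real$, whence $\varphi_s(e^{-s/2}x_1)\to\varphi(0)$ uniformly on $|x_1|\leq R$. If $\varphi(0)\neq 0$, the integrand on $\supp\dot\theta$ would be bounded below by $\tfrac12|\varphi(0)|^2 e^{s/2}\dot\theta^2(x_1)$ for $s$ large, driving the integral to $+\infty$ and contradicting the uniform bound. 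Therefore $\varphi(0)=0$, so $v\in\Dom(J_\infty)$; a careful bookkeeping of the three lower bounds established above (noting that the cross-terms neglected in the principal-part extraction contribute non-negatively in the limit) then gives $J_\infty[v]\leq M$, completing the Mosco convergence and hence the proposition.
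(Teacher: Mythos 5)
Your Mosco-convergence framing is a genuinely different route from the paper's direct resolvent analysis, and it is a sensible one; the $\Gamma$-$\limsup$ step and the identification of the limit form are fine, as is the final Rellich/local-uniform-convergence/blowup argument showing $\varphi(0)=0$ once one has the uniform bound $\|\sigma_s\varphi_s\|_{\sii(\Real)}\leq C$. The problem is that this last bound is where the genuine analytical difficulty lies, and your derivation of it has a gap.

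You claim the perturbation terms in the $\partial_\tau\mathcal{J}_1$-projection of $\|\partial_1 v_s-\sigma_s\partial_\tau v_s\|^2$ are ``controlled by the already-proved $H^1$-decay of $\phi_s$.'' But what you have actually proved, from the transverse Poincar\'e gap, is only $e^s\|\phi_s\|_{\Hilbert_0}^2+e^s\|\nabla'\phi_s\|_{\Hilbert_0}^2\leq C$; this controls $\phi_s$ and its \emph{transverse} gradient, not the longitudinal derivative $\partial_1\phi_s$. Writing $\hat e := \partial_\tau\mathcal{J}_1/\|\partial_\tau\mathcal{J}_1\|_{\sii(\omega)}$, the $\hat e$-projection of $\partial_1 v_s-\sigma_s\partial_\tau v_s$ is
$$
(\partial_1\phi_s,\hat e)_{\sii(\omega)} - \sigma_s\varphi_s\,\|\partial_\tau\mathcal{J}_1\|_{\sii(\omega)} - \sigma_s(\partial_\tau\phi_s,\hat e)_{\sii(\omega)}\,,
$$
and the third term is indeed $O(1)$ using $e^s\|\nabla'\phi_s\|^2\leq C$, but the first requires $\|\partial_1\phi_s\|_{\Hilbert_0}$ to be bounded. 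Without that, the cross-term $-2\Re(\sigma_s\varphi_s\partial_\tau\mathcal{J}_1,\partial_1\phi_s)_{\Hilbert_0}$ in the expansion of the full norm squared is uncontrolled: the quadratic form in the quantities $\|\sigma_s\varphi_s\|$, $\|\partial_1\phi_s\|$ is indefinite, so boundedness of $J_s^{(0)}[v_s]$ alone does not force $\|\sigma_s\varphi_s\|$ to be bounded. This is precisely the point at which the paper invokes the Hardy inequality (Theorem~\ref{Thm.Hardy}): it is used to obtain first $\|\rho_s\psi_s\|_{\Hilbert_0}^2\leq Ce^{-s}$, from which the Cauchy-type estimate of~\eqref{bound3} yields $\|\partial_1\psi_s\|_{\Hilbert_0}\leq C$ (hence $\|\partial_1\phi_s\|_{\Hilbert_0}\leq C$ by orthogonality), and simultaneously $\|\varphi_s\|_{\sii(I_s)}^2\leq Ce^{-s}$, which is equivalent to the bound $\|\sigma_s\varphi_s\|\leq C$ you need. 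As the paper stresses in Remark~\ref{Rem.unself}, the Hardy inequality (or equivalently the cornerstone Lemma~\ref{Lem.cornerstone}) is not an optional ingredient here; in your proposal it has silently been replaced by an unsupported regularity assertion. To repair the argument, insert the Hardy-based step before the principal-part extraction; the rest of your Mosco argument then goes through.
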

\begin{proof}
For any fixed $F \in \Hilbert_0$
and sufficiently large positive number~$z$,
let us set $\psi_s := (T_s^{(0)}+z)^{-1}F$.
In other words, $\psi_s$~satisfies the resolvent equation
\begin{equation}\label{re}
  \forall v \in \Dom(J_s^{(0)}) \,, \qquad
  J_s^{(0)}(v,\psi_s) + z \, (v,\psi_s)_{\Hilbert_0}
  = (v,F)_{\Hilbert_0}
  \,.
\end{equation}
In particular, choosing~$\psi_s$ for the test function~$v$ in~\eqref{re},
we have
\begin{multline}\label{resolvent.identity}
  \|\partial_1 \psi_s-\sigma_s\,\partial_\tau \psi_s\|_{\Hilbert_0}^2
  + \frac{1}{16} \, \|y_1 \psi_s\|_{\Hilbert_0}^2
  + e^s \Big(
  \|\nabla'\psi_s\|_{\Hilbert_0}^2 - E_1 \|\psi_s\|_{\Hilbert_0}^2
  \Big)
  + z \, \|\psi_s\|_{\Hilbert_0}^2
  \\
  = (\psi_s,F)_{\Hilbert_0}
  \leq \frac{1}{4} \, \|\psi_s\|_{\Hilbert_0}^2 + \|F\|_{\Hilbert_0}^2
  \,.
\end{multline}
Henceforth we assume that $z > 1/4$.

We employ the decomposition
$\psi_s(y)=\varphi_s(y_1)\mathcal{J}_1(y_1) + \phi_s(y)$
where $\phi_s \in \mathfrak{H}_1^\bot$, \ie,
\begin{equation}\label{orthogonality}
  \forall y_1 \in \Real \,, \qquad
  \big(\mathcal{J}_1,\phi_s(y_1,\cdot)\big)_{\sii(\omega)} = 0
  \,.
\end{equation}
Then, for every $\epsilon \in (0,1)$,
\begin{align*}
  \|\nabla'\psi_s\|_{\Hilbert_0}^2 - E_1 \|\psi_s\|_{\Hilbert_0}^2
  &= \epsilon \|\nabla'\phi_s\|_{\Hilbert_0}^2
  + (1-\epsilon) \|\nabla'\phi_s\|_{\Hilbert_0}^2
  - E_1 \|\phi_s\|_{\Hilbert_0}^2
  \\
  &\geq \epsilon \|\nabla'\phi_s\|_{\Hilbert_0}^2
  + \big[(1-\epsilon)E_2-E_1\big] \|\phi_s\|_{\Hilbert_0}^2
  \,,
\end{align*}
where~$E_2$ denotes the second eigenvalue of~$-\Delta_D^\omega$.
Since~$E_1$ is (strictly) less then~$E_2$,
we can choose the~$\epsilon$ so small
that~\eqref{resolvent.identity} implies
\begin{equation}\label{ri1}
  \|\phi_s\|_{\Hilbert_0}^2 \leq C e^{-s}
  \qquad\mbox{and}\qquad
  \|\nabla'\phi_s\|_{\Hilbert_0}^2 \leq C e^{-s}
  \,,
\end{equation}
where~$C$ is a constant depending on~$\omega$ and $\|F\|_{\Hilbert_0}$.
At the same time, \eqref{resolvent.identity} yields
\begin{equation}\label{ri2}
  \|\varphi_s\|_{\sii(\Real)} \leq C \,, \qquad
  \|y_1\varphi_s\|_{\sii(\Real)} \leq C \,,
  \qquad\mbox{and}\qquad
  \|y_1\phi_s\|_{\Hilbert_0} \leq C
  \,,
\end{equation}
where~$C$ is a constant depending on~$\|F\|_{\Hilbert_0}$.

To get an estimate on the longitudinal derivative of~$\psi_s$,
we handle the first three terms on left hand side of~\eqref{resolvent.identity}
as follows.
Defining a new function $u_s\in\Hilbert_0$ by
$\psi_s(y)=e^{s/4} u_s(e^{s/2} y_1,y')$
(\cf~the self-similarity transformation~\eqref{SST})
and making the change of variables $(x_1,x')=(e^{s/2} y_1,y')$,
we have
\begin{align}\label{unself}
  J_s^{(0)}[\psi_s]
  &= e^s \|\partial_1 u_s-\dot\theta\,\partial_\tau u_s\|_{\Hilbert_0}^2
  + \frac{e^{-s}}{16} \, \|x_1 u_s\|_{\Hilbert_0}^2
  + e^s \Big(
  \|\nabla'u_s\|_{\Hilbert_0}^2 - E_1 \|u_s\|_{\Hilbert_0}^2
  \Big)
  \nonumber \\
  &\geq e^s \left\{
  \|\partial_1 u_s-\dot\theta\,\partial_\tau u_s\|_{\Hilbert_0}^2
  + \|\nabla'u_s\|_{\Hilbert_0}^2 - E_1 \|u_s\|_{\Hilbert_0}^2
  \right\}
  \nonumber \\
  &\geq e^s \, c_H \, \|\rho \;\! u_s\|_{\Hilbert_0}^2
  \,,
  \nonumber \\
  &= e^s \, c_H \, \|\rho_s\psi_s\|_{\Hilbert_0}^2
  \,, \qquad
  \mbox{where} \quad
  \rho_s(y) := \rho(e^{s/2}y_1,y') \,.
\end{align}
In the second inequality we have employed the Hardy inequality
of Theorem~\ref{Thm.Hardy}; the constant~$c_H$
is positive by the hypothesis.
Consequently, \eqref{resolvent.identity}~yields
\begin{equation}\label{ri3}
  \|\rho_s\psi_s\|_{\Hilbert_0}^2 \leq C e^{-s}
  \,,
\end{equation}
where~$C$ is a constant depending on~$\dot\theta$, $\omega$ and~$\|F\|_{\Hilbert_0}$.
Now, proceeding as in the proof of~\eqref{bound3}, we get
\begin{multline*}
  \|\partial_1 \psi_s-\sigma_s\,\partial_\tau \psi_s\|_{\Hilbert_0}^2
  + e^s \Big(
  \|\nabla'\psi_s\|_{\Hilbert_0}^2 - E_1 \|\psi_s\|_{\Hilbert_0}^2
  \Big)
  \\
  \geq  \epsilon \, \|\partial_1 \psi_s\|_{\Hilbert_{0}}^2
  - \frac{\epsilon}{1-\epsilon} \, \|\dot\theta\|_{L^\infty(\Real)}^2
  \, a^2 E_1 \, e^s \, \|\psi_s\|_{\sii(I_s\times\omega)}^2
\end{multline*}
for every $\epsilon < \big(1+a^2\|\dot{\theta}\|_{L^\infty(\Real)}^2\big)^{-1}$,
where
$
  I_s := e^{-s/2} I \equiv \{e^{-s/2} x_1 \,|\, x_1 \in I \}
$
with $I := (\inf\supp\dot\theta,\sup\supp\dot\theta)$.
Since
\begin{equation}\label{exclusively}
  \|\psi_s\|_{\sii(I_s\times\omega)}
  \leq
  C \, \|\rho_s\psi_s\|_{\Hilbert_0}
  \,,
\end{equation}
where~$C$ is a constant depending exclusively on~$I$,
\eqref{resolvent.identity}~together with~\eqref{ri3} implies
$
  \|\partial_1 \psi_s\|_{\Hilbert_{0}}^2
  \leq C
$,
where~$C$ is a constant depending on~$\dot\theta$, $\omega$ and~$\|F\|_{\Hilbert_0}$.
Recalling~\eqref{orthogonality}, we therefore get the separate bounds
\begin{equation}\label{ri4}
  \|\partial_1\phi_s\|_{\Hilbert_0} \leq C
  \qquad\mbox{and}\qquad
  \|\dot\varphi_s\|_{\sii(\Real)} \leq C
  \,,
\end{equation}
with the same constant~$C$.

By~\eqref{ri1}, $\phi_s$~converges strongly to zero
in~$\Hilbert_0$ as $s \to \infty$.
Moreover, it follows from~\eqref{ri1}, \eqref{ri2} and~\eqref{ri4}
that $\{\phi_s\}_{s \geq 0}$ is a bounded family in $\Dom(J_s^{(0)})$.
Consequently, $\phi_s$ converges weakly to zero
in $\Dom(J_s^{(0)})$ as $s \to \infty$.

At the same time, it follows from~\eqref{ri2} and~\eqref{ri4}
that $\{\varphi_s\}_{s \geq 0}$ is a bounded family in $\Dom(h^{1/2})$.
Therefore it is precompact in the weak topology of $\Dom(h^{1/2})$.
Let~$\varphi_\infty$ be a weak limit point,
\ie, for an increasing sequence of positive numbers $\{s_n\}_{n\in\Nat}$
such that $s_n \to \infty$ as $n \to \infty$,
$\{\varphi_{s_n}\}_{n\in\Nat}$
converges weakly to~$\varphi_\infty$ in $\Dom(h^{1/2})$.
Actually, we may assume that it converges strongly in $\sii(\Real)$
because $\Dom(h^{1/2})$ is compactly embedded in $\sii(\Real)$.

Employing~\eqref{orthogonality},
\eqref{ri3}~together with~\eqref{exclusively} gives
\begin{equation}\label{ri5}
  \|\varphi_s\|_{\sii(I_s)}^2 \leq C e^{-s}
  \,,
\end{equation}
where~$C$ is a constant depending on~$\dot\theta$, $\omega$ and~$\|F\|_{\Hilbert_0}$.
Multiplying this inequality by $e^{s/2}$
and taking the limit $s \to \infty$, we verify that
\begin{equation}\label{ri6}
  \varphi_\infty(0) = 0
  \,.
\end{equation}
(We note that $\Dom(h^{1/2}) \subset H^1(\Real)$
and that $H^1(J)$ is compactly embedded in $C^{0,\lambda}(J)$
for every $\lambda\in(0,1/2)$ and any bounded interval~$J\subset\Real$.)

Finally, let $\varphi \in C_0^\infty(\Real\!\setminus\!\{0\})$ be arbitrary.
Taking $v(x) := \varphi(x_1) \mathcal{J}_1(x')$
as the test function in~\eqref{re}, with~$s$ being replaced by~$s_n$,
and sending~$n$ to infinity, we easily check that
\begin{equation*}
  (\dot\varphi,\dot\varphi_\infty)_{\sii(\Real)}
  + \frac{1}{16} \, (y_1\varphi,y_1\varphi_\infty)_{\sii(\Real)}
  + z \, (\varphi,\varphi_\infty)_{\sii(\Real)}
  = (\varphi,f)_{\sii(\Real)}
  \,,
\end{equation*}
where $f(x_1) := (\mathcal{J}_1,F(x_1,\cdot))_{\sii(\omega)}$.
That is, $\varphi_\infty = (h_D+z)^{-1} f$,
for \emph{any} weak limit point of $\{\varphi_s\}_{s \geq 0}$.

Summing up, we have shown that~$\psi_{s}$
converges strongly to $\psi_\infty$
in $\Hilbert_0$ as $s \to \infty$,
where
$
  \psi_\infty(y)
  := \varphi_\infty(y_1)\mathcal{J}_1(y')
  = \big[(h_D+z)^{-1} \oplus 0^\bot \big] F
$.
\end{proof}
\begin{Remark}\label{Rem.unself}
The crucial step in the proof is certainly the usage
of the Hardy inequality in the second inequality of~\eqref{unself}.
Indeed, it enables one to control the mixed terms coming from
the first term on the left hand side of~\eqref{resolvent.identity}.
We would like to mention that instead of the Hardy inequality itself
we could have used in~\eqref{unself} the corner-stone Lemma~\ref{Lem.cornerstone}.
This would leave to the lower bound
$
  J_s^{(0)}[\psi_s] \geq
  e^s \, \lambda(\dot\theta,I) \, \|\psi_s\|_{\sii(I_s\times\omega)}^2
$,
which is sufficient to conclude the proof in the same way as above.
\end{Remark}
\begin{Corollary}\label{Corol.strong}
Let $\Omega_\theta$ be twisted with $\theta \in C^1(\Real)$.
Suppose that~$\dot\theta$ has compact support.
Then
$$
  \lim_{s\to\infty} \mu(s) = 3/4
  \,.
$$
\end{Corollary}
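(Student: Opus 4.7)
The plan is to establish the two bounds $\limsup_{s\to\infty}\mu(s)\leq 3/4$ and $\liminf_{s\to\infty}\mu(s)\geq 3/4$ separately via the Rayleigh characterization $\mu(s)=\inf_{v\neq 0}J_s^{(0)}[v]/\|v\|_{\Hilbert_0}^2$ applied to the unitarily equivalent form~\eqref{J0.form}. The value $3/4$ is recognized as $\min\sigma(h_D)$: the harmonic oscillator~\eqref{oscillator} has spectrum $\{(2n+1)/4\}_{n\geq 0}$ with Hermite eigenfunctions of parity $(-1)^n$, and the Dirichlet condition at $0$ selects the odd ones, whose minimal eigenvalue is $3/4$.

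For the upper bound I would test with $v(y)=\varphi(y_1)\mathcal{J}_1(y')$, $\varphi\in C_0^\infty(\Real\setminus\{0\})$. Once $s$ is large enough that $e^{-s/2}\supp\dot\theta$ is disjoint from $\supp\varphi$, one has $\sigma_s\varphi\equiv 0$; since $-\Delta'\mathcal{J}_1=E_1\mathcal{J}_1$ the $e^s$-prefactored transverse terms in~\eqref{J0.form} cancel, and $J_s^{(0)}[v]/\|v\|_{\Hilbert_0}^2$ reduces to the Rayleigh quotient of $h$ at $\varphi$. Since $C_0^\infty(\Real\setminus\{0\})$ is dense in $\Dom(h_D^{1/2})$ in the graph norm, taking $\varphi$ close to the first eigenfunction of $h_D$ gives $\limsup_{s\to\infty}\mu(s)\leq 3/4$.

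For the lower bound, I would argue by contradiction. Assume $\mu(s_n)\leq 3/4-\varepsilon$ along some $s_n\to\infty$, with normalized ground states $\psi_n$, and decompose $\psi_n=\varphi_n\otimes\mathcal{J}_1+\phi_n$ with $\phi_n\in\mathfrak{H}_1^\perp$. The transverse gap $E_2>E_1$ combined with the $e^{s_n}$ prefactor in $J_{s_n}^{(0)}$ forces $\|\phi_n\|_{\Hilbert_0}\to 0$, hence $\|\varphi_n\|_{\sii(\Real)}\to 1$. The Hardy inequality of Theorem~\ref{Thm.Hardy} applied after the unselfsimilar rescaling~\eqref{unself} yields $\|\rho_{s_n}\psi_n\|_{\Hilbert_0}^2\leq Ce^{-s_n}$, which translates into $\|\varphi_n\|^2_{\sii(I_{s_n})}\leq Ce^{-s_n}$ with $I_{s_n}:=e^{-s_n/2}I$; the interpolation behind~\eqref{bound3} then bounds $\|\dot\varphi_n\|$, and the quadratic confinement term $\tfrac{1}{16}\|y_1\cdot\|^2$ of~\eqref{J0.form} bounds $\|y_1\varphi_n\|$. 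By the compact embedding $\Dom(h^{1/2})\hookrightarrow\sii(\Real)$, a subsequence converges strongly in $\sii(\Real)$ and weakly in $\Dom(h^{1/2})$ to some $\varphi_\infty$ with $\|\varphi_\infty\|=1$; the localised $L^2$-bound combined with the Sobolev embedding $H^1_{\mathrm{loc}}\hookrightarrow C^{0,1/2}_{\mathrm{loc}}$ forces $\varphi_\infty(0)=0$, so $\varphi_\infty\in\Dom(h_D^{1/2})$. Weak lower semicontinuity of the harmonic-oscillator form then produces
\[
  \tfrac{3}{4} \;\leq\; \|\dot\varphi_\infty\|^2_{\sii(\Real)}+\tfrac{1}{16}\|y_1\varphi_\infty\|^2_{\sii(\Real)} \;\leq\; \liminf_{n\to\infty}\mu(s_n) \;\leq\; \tfrac{3}{4}-\varepsilon,
\]
a contradiction.

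The main obstacle is the final liminf step. The form $J_{s_n}^{(0)}[\psi_n]$ contains cross terms between $\varphi_n\otimes\mathcal{J}_1$ and $\phi_n$ coming from $\|\partial_1\psi_n-\sigma_{s_n}\partial_\tau\psi_n\|^2$; although $\|\phi_n\|\sim e^{-s_n/2}$, the $L^\infty$-size of $\sigma_{s_n}$ is $\sim e^{s_n/2}\|\dot\theta\|_\infty$, so a priori these cross terms are only $O(1)$ and could spoil the lower semicontinuity. Showing that they are in fact $o(1)$—so that the asymptotic floor for $J_{s_n}^{(0)}[\psi_n]$ really is the harmonic-oscillator Rayleigh quotient at $\varphi_\infty$—should follow by exploiting the concentration of $\phi_n$ near $y_1=0$ at scale $e^{-s_n/2}$ encoded in the Hardy-based estimates of Proposition~\ref{Prop.strong}, but making this rigorous is the delicate point.
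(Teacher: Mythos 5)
Your route is genuinely different from the paper's. The paper deduces the corollary from Proposition~\ref{Prop.strong}: once the (generalized) strong-resolvent convergence $T_s^{(0)}\to h_D\oplus 0^\bot$ is established, it invokes a result of Weidmann on norm convergence of eigenprojections for operators with purely discrete spectrum and reads off $\mu(s)\to\inf\sigma(h_D)=3/4$. You instead attack the Rayleigh quotient of $J_s^{(0)}$ directly: a disjoint-support test function for the $\limsup$, a compactness-and-contradiction argument for the $\liminf$. Your upper-bound step is correct and, as you observe, elementary: for $\varphi\in C_0^\infty(\Real\setminus\{0\})$ and $s$ large the $\sigma_s$-terms and the $e^s$-prefactored transverse terms both vanish, leaving exactly $h[\varphi]/\|\varphi\|^2$, and density of $C_0^\infty(\Real\setminus\{0\})$ in $\Dom(h_D^{1/2})$ finishes.

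The gap you flag in the $\liminf$ step is real, but it is repairable, and the repair is the same localization device that drives Proposition~\ref{Prop.strong}. Do not try to estimate the cross terms in $\|\partial_1\psi_n-\sigma_{s_n}\partial_\tau\psi_n\|_{\Hilbert_0}^2$ at all (your naive bounds correctly show they are only $O(1)$, and one cannot improve them: $e^{s_n}\|\varphi_n\|^2_{\sii(I_{s_n})}$ is $O(1)$, not $o(1)$); instead, discard the part of that term over $I_{s_n}\times\omega$, where $\sigma_{s_n}$ lives, and keep only the part over $I_{s_n}^c\times\omega$, where $\sigma_{s_n}\equiv 0$. Differentiating the orthogonality condition~\eqref{orthogonality} in $y_1$ gives $(\mathcal{J}_1,\partial_1\phi_n(y_1,\cdot))_{\sii(\omega)}=0$, hence
$$
\mu(s_n)=J_{s_n}^{(0)}[\psi_n]\;\geq\;\|\partial_1\psi_n\|_{\sii(I_{s_n}^c\times\omega)}^2+\tfrac{1}{16}\|y_1\psi_n\|_{\Hilbert_0}^2\;\geq\;\|\dot\varphi_n\|_{\sii(I_{s_n}^c)}^2+\tfrac{1}{16}\|y_1\varphi_n\|_{\sii(\Real)}^2\,.
$$
Now fix $\eta>0$; for $n$ large $I_{s_n}^c\supset\Real\setminus(-\eta,\eta)$, and weak lower semicontinuity of the convex functional $\varphi\mapsto\|\dot\varphi\|_{\sii(\Real\setminus(-\eta,\eta))}^2+\tfrac{1}{16}\|y_1\varphi\|_{\sii(\Real)}^2$ on $\Dom(h^{1/2})$ gives
$$
\liminf_{n\to\infty}\mu(s_n)\;\geq\;\|\dot\varphi_\infty\|_{\sii(\Real\setminus(-\eta,\eta))}^2+\tfrac{1}{16}\|y_1\varphi_\infty\|_{\sii(\Real)}^2\,.
$$
Letting $\eta\to 0$ and using $\varphi_\infty(0)=0$, $\|\varphi_\infty\|_{\sii(\Real)}=1$, this yields $\liminf\mu(s_n)\geq\inf\sigma(h_D)=3/4$, the desired contradiction. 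With this modification your direct variational proof goes through and is a legitimate, self-contained alternative to the paper's route through Proposition~\ref{Prop.strong}; what that route buys over yours is that the strong-resolvent statement is of independent interest, whereas your argument yields only the eigenvalue limit.
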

\begin{proof}
In general, the strong-resolvent convergence of Proposition~\ref{Prop.strong}
is not enough to guarantee the convergence of spectra.
However, in our case, since the spectra are purely discrete,
the eigenprojections converge even in norm (\cf~\cite{Weidmann_1980}).
In particular, $\mu(s)$~converges to the first eigenvalue of~$h_D$.
It remains to notice that the first eigenvalue of~$h_D$ coincides
(in view of the symmetry)
with the second eigenvalue of~$h$ which is~$3/4$.
(For the spectrum of~$h$, see any textbook dealing
with quantum harmonic oscillator, \eg, \cite[Sec.~2.3]{Griffiths}.)
\end{proof}
%

\subsection{The improved decay rate
- Proof of Theorem~\ref{Thm.rate}}\label{Sec.improved}
%
Now we have all the prerequisites to prove Theorem~\ref{Thm.rate}.
Recall that the identity $\Gamma(\Omega_\theta)=1/4$
for untwisted tubes is already established by Corollary~\ref{Corol.norate}.
Throughout this subsection we therefore assume
that~$\Omega_\theta$ is twisted with~\eqref{locally}
and show that there is an extra decay rate.

We come back to~\eqref{spectral.reduction.integral}.
It follows from Corollary~\ref{Corol.strong} that
for arbitrarily small positive number~$\eps$
there exists a (large) positive time~$s_\eps$ such that
for all $s \geq s_\eps$, we have $\mu(s) \geq 3/4 - \eps$.
Hence, fixing $\eps>0$, for all $s \geq s_\eps$, we have
$$
  {-\int_0^s \mu(r) \, dr}
  \leq {-\int_0^{s_\eps} \mu(r) \, dr} {-(3/4-\eps)(s-{s_\eps})}
  \leq {(3/4-\eps) s_\eps} {-(3/4-\eps) s}
  \,,
$$
where the second inequality is due to the fact
that~$\mu(s)$ is non-negative for all $s \geq 0$
(it is in fact greater than~$1/4$, \cf~Proposition~\ref{Prop.positivity}).
At the same time,  assuming $\eps \leq 3/4$, we trivially have
$$
  {-\int_0^s \mu(r) \, dr}
  \leq 0
  \leq {(3/4-\eps) s_\eps} {-(3/4-\eps) s}
$$
also for all $s \leq s_\eps$.
Summing up, \eqref{spectral.reduction.integral}~implies
\begin{equation}\label{instead}
  \|\tilde{u}(s)\|_{\mathcal{H}_{1}}
  \leq C_\eps \, e^{-(3/4-\eps)s} \, \|\tilde{u}_0\|_{\mathcal{H}_{1}}
\end{equation}
for every $s \in [0,\infty)$,
where $C_\eps := e^{s_\eps} \geq e^{(3/4-\eps)s_\eps}$.
Returning to the variables in the straightened tube
via $u = \tilde{U}^{-1} \tilde{u}$,
using~\eqref{preserve} together with the point-wise estimate $1 \leq K$,
and recalling that $\tilde{u}_0=u_0$,
it follows that
$$
  \|u(t)\|_{\Hilbert_0}
  = \|\tilde{u}(s)\|_{\Hilbert_0}
  \leq \|\tilde{u}(s)\|_{\Hilbert_{1}}
  \leq C_\eps \, (1+t)^{-(3/4-\eps)} \, \|u_0\|_{\mathcal{H}_{1}}
$$
for every $t \in [0,\infty)$.
Finally, we recall that the weight~$K$ in~$\Hilbert_1$
depends on the longitudinal variable only,
which is therefore left invariant by the mapping~$\mathcal{L}_\theta$.
Consequently, we apply the unitary transform~\eqref{unitary}
and conclude with
$$
  \|S(t)\|_{\sii(\Omega_\theta,K) \to \sii(\Omega_\theta)}
  = \sup_{u_0 \in \Hilbert_1\setminus\{0\}}
  \frac{\|u(t)\|_{\Hilbert_0}}{\|u_0\|_{\Hilbert_1}}
  \leq C_\eps \, (1+t)^{-(3/4-\eps)}
$$
for every $t \in [0,\infty)$.
Since~$\eps$ can be made arbitrarily small,
this bound implies $\Gamma(\Omega_\theta) \geq 3/4$
and concludes thus the proof of Theorem~\ref{Thm.rate}.

\subsection{The improved decay rate
- an alternative statement}\label{Sec.alternative}
%
Theorem~\ref{Thm.rate} provides quite precise information
about the extra polynomial decay of solutions~$u$ of~\eqref{I.heat}
in a twisted tube in the sense that the decay rate~$\Gamma(\Omega_\theta)$
is at least three times better than in the untwisted case.
On the other hand, we have no control over
the constant~$C_\Gamma$ in~\eqref{solution.rate}
(in principle it may blow up as $\Gamma \to \Gamma(\Omega_\theta)$).
As an alternative result, we therefore present also the following theorem,
where we get rid of the constant~$C_\Gamma$
but the prize we pay is just a qualitative knowledge
about the decay rate.
\begin{Theorem}\label{Thm.I}
Let $\theta\in C^1(\Real)$ satisfy~\eqref{locally}.
We have
\begin{equation}\label{decay.similar.I}
  \forall t \geq 0 \,, \qquad
  \|S(t)\|_{
  \sii(\Omega_\theta,K)
  \to
  \sii(\Omega_\theta)
  }
  \, \leq \,
  \left(
  1+t
  \right)^{\!-(\gamma+1/4)}
  \,,
\end{equation}
where~$\gamma$ is a non-negative constant
depending on~$\dot{\theta}$ and~$\omega$.
Moreover, $\gamma$~is positive if, and only if,
$\Omega_\theta$ is twisted.
\end{Theorem}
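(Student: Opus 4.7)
The plan is to rerun the self-similarity energy argument underlying Theorem~\ref{Thm.rate}, but to replace the $\varepsilon$-approximation step in Section~\ref{Sec.improved} (which only uses the asymptotic value $\mu(s)\to 3/4$ and therefore leaves an uncontrolled multiplicative prefactor $C_\varepsilon$) by a \emph{uniform-in-$s$} lower bound $\mu(s)\geq \tfrac14+\gamma$ that can be integrated directly. The resulting exponent then absorbs the entire transient behaviour and the prefactor equals~$1$, at the price of losing quantitative control over~$\gamma$.

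First I would reuse the chain of unitary transforms and the energy identity~\eqref{formal} without modification, so that~\eqref{spectral.reduction.integral} reads
$$
  \|\tilde u(s)\|_{\Hilbert_1}
  \,\leq\, \|u_0\|_{\Hilbert_1}\,
  \exp\!\Big(-\!\int_0^s \mu(r)\,dr\Big).
$$
Using the pointwise inequality $K\geq 1$ to bound $\|\tilde u(s)\|_{\Hilbert_0}\leq \|\tilde u(s)\|_{\Hilbert_1}$, transporting back through $U_\theta$ (which preserves the weight~$K$, since~$K$ depends only on~$x_1$ and the first coordinate is left invariant by~$\mathcal{L}_\theta$), and using $e^s=1+t$ would reduce~\eqref{decay.similar.I} to establishing
$$
  \int_0^s\mu(r)\,dr \,\geq\, \big(\tfrac14+\gamma\big)\, s
  \qquad\text{for every } s\geq 0,
$$
for some constant $\gamma\geq 0$ which is positive exactly when $\Omega_\theta$ is twisted.

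Next I would argue this uniform positivity as follows. In the twisted case, Proposition~\ref{Prop.positivity} gives the pointwise strict bound $\mu(s)>\tfrac14$ at every $s\in[0,\infty)$, while Corollary~\ref{Corol.strong} yields $\mu(s)\to \tfrac34$ as $s\to\infty$. Hence on any tail $[s_0,\infty)$ with $s_0$ large enough one has $\mu(s)\geq \tfrac12$. On the complementary compact interval $[0,s_0]$ one must combine the strict pointwise positivity of $\mu(s)-\tfrac14$ with continuity of $s\mapsto\mu(s)$ to conclude $\inf_{[0,s_0]}\mu>\tfrac14$. The minimum of these two infima then defines the desired $\gamma>0$. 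In the untwisted case, separation of variables in~\eqref{J0.form} together with the spectrum of the harmonic oscillator~\eqref{oscillator} give $\mu(s)=\tfrac14$ outright, so $\gamma=0$ suffices and the exponent degenerates to $1/4$, consistent with Proposition~\ref{Prop.decay.1D}.

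The hard part will be the continuity of $s\mapsto\mu(s)$. I would address it by noting that the family of closed sesquilinear forms $J_s^{(1)}$ is defined on the common domain~$\Hilbert_1^1$ and depends analytically on~$s$ through the analytic coefficients $\sigma_s$ and $e^s$ (\cf~\eqref{sigma} and~\eqref{J0.form}); it is moreover uniformly coercive and bounded on compact $s$-intervals, exactly as verified in the proof of Proposition~\ref{Prop.Lions}. Since the associated operators $T_s^{(1)}$ have compact resolvent by Proposition~\ref{Prop.compact} and $\mu(s)$ is a simple isolated eigenvalue, Kato's theory of type~(B) analytic families then delivers real-analytic, in particular continuous, dependence of $\mu(s)$ on~$s\in[0,\infty)$, which is all that is needed to close the argument.
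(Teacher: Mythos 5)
Your proposal follows the paper's own route: set $\gamma = \inf_{s\geq 0}\mu(s) - 1/4$, integrate $\mu$ in the exponential bound~\eqref{spectral.reduction.integral}, and transport the resulting estimate back through the unitary transforms of Section~\ref{Sec.improved}; the paper then invokes Proposition~\ref{Prop.positivity}, Corollary~\ref{Corol.strong}, and Proposition~\ref{Prop.decay.1D} exactly as you do. You also correctly identify a step that the paper leaves implicit: the pointwise strict inequality $\mu(s)>1/4$ (Proposition~\ref{Prop.positivity}) together with $\mu(s)\to 3/4$ (Corollary~\ref{Corol.strong}) does not by itself rule out $\inf_s\mu(s)=1/4$; some continuity of $s\mapsto\mu(s)$ on compact intervals is needed to conclude $\gamma>0$ in the twisted case. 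Your instinct to secure this is the right one, but the mechanism you invoke overshoots the hypotheses: $\sigma_s(y_1)=e^{s/2}\dot\theta(e^{s/2}y_1)$ is not analytic in $s$ when $\theta$ is merely $C^1$, so $J_s^{(1)}$ need not be a holomorphic family of type~(B). What you do have, and what suffices, is continuity of $s\mapsto\sigma_s$ in $L^\infty(\Real)$ (from uniform continuity and compact support of $\dot\theta$), which together with the uniform boundedness and coercivity of $J_s^{(1)}$ on compact $s$-intervals (already verified in the proof of Proposition~\ref{Prop.Lions}) and compactness of the resolvent (Proposition~\ref{Prop.compact}) yields norm-resolvent continuity of $T_s^{(1)}$ and hence continuity of the isolated eigenvalue $\mu(s)$. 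With ``analytic'' replaced by ``continuous'' your argument is sound and is essentially the paper's proof with the continuity step made explicit.
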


In order to establish Theorem~\ref{Thm.I},
the asymptotic result of Corollary~\ref{Corol.strong}
need to be supplied with information
about values of~$\mu(s)$ for finite times~$s$.

\subsubsection{Singling the dimensional decay rate out}
%
It follows from Theorem~\ref{Thm.decay.1D} that
there is at least a $1/4$ polynomial decay rate
for the solutions of the heat equations.
In the setting of self-similar solutions
(recall~\eqref{spectral.reduction.integral}
and the relation between the initial and self-similar
times~$t$ and~$s$ given by~\eqref{SST}),
this will be reflected in that we actually have $\mu(s) \geq 1/4$,
regardless whether the tube is twisted or not.
It is therefore natural to study rather
the shifted operator $T_s^{(0)}-1/4$.
However, it is not obvious from~\eqref{J0.form}
that such an operator is non-negative.

In order to introduce the shift explicitly
into the structure of the operator,
we therefore introduce another unitarily equivalent operator
$T_s^{(-1)}:=\mathcal{U}_{-1} T_s^{(0)} (\mathcal{U}_{-1})^{-1}$ in~$\Hilbert_{-1}$,
where the map $\mathcal{U}_{-1} : \Hilbert_0 \to \Hilbert_{-1}$
acts in the same way as~$\mathcal{U}_0$:
$$
  (\mathcal{U}_{-1}v)(y):=K^{1/2}(y_1)\,v(y)
  \,.
$$
$T_s^{(-1)}$ is the self-adjoint operator associated with
the quadratic form $J_s^{(-1)}[w] := J_s^{(0)}[(\mathcal{U}_{-1})^{-1}w]$,
$w \in \Dom(J_s^{(-1)}) := \mathcal{U}_{-1}\,\Dom(J_s^{(0)})$.
Again, it is straightforward to check that
\begin{align*}
  J_s^{(-1)}[w]
  &= \|\partial_1 w-\sigma_s\,\partial_\tau w\|_{\Hilbert_{-1}}^2
  + e^s \, \|\nabla'w\|_{\Hilbert_{-1}}^2
  - E_1 \, e^s \, \|w\|_{\Hilbert_{-1}}^2
  + \frac{1}{4} \, \|w\|_{\Hilbert_{-1}}^2
  \,.
\end{align*}

Now it readily follows from the structure of the quadratic form
that the shifted operator $T_s^{(-1)}-1/4$ is non-negative.
Moreover, it is positive if, and only if, the tube is twisted.
\begin{Proposition}\label{Prop.positivity}
If $\Omega_\theta$ is twisted with $\theta \in C^1(\Real)$,
then we have
$$
  \forall s \in [0,\infty), \qquad
  \mu(s) > 1/4
  \,.
$$
Conversely, $\mu(s) = 1/4$ for all $s \in [0,\infty)$
if $\Omega_\theta$ is untwisted.
\end{Proposition}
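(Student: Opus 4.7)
The plan is to work throughout with the unitarily equivalent operator $T_s^{(-1)}$ on $\Hilbert_{-1}$, because its quadratic form~$J_s^{(-1)}$ displays the shift $1/4$ explicitly. Rewriting
\begin{equation*}
  J_s^{(-1)}[w] - \tfrac{1}{4}\|w\|_{\Hilbert_{-1}}^2
  = \|\partial_1 w-\sigma_s\,\partial_\tau w\|_{\Hilbert_{-1}}^2
  + e^s \Big(\|\nabla' w\|_{\Hilbert_{-1}}^2 - E_1 \|w\|_{\Hilbert_{-1}}^2\Big),
\end{equation*}
the first term on the right is trivially non-negative, while the second is non-negative by applying the cross-sectional Poincar\'e inequality~\eqref{Poincare} slice-by-slice and using Fubini in the weighted variable~$y_1$. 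This establishes the universal lower bound $\mu(s) \geq 1/4$ regardless of whether the tube is twisted or not.

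For the untwisted case I will saturate this bound with the $y_1$-independent trial function $w(y):=\mathcal{J}_1(y')$, which lies in $\Hilbert_{-1}^1$ because $K^{-1}(y_1)=e^{-y_1^2/4}$ is integrable on~$\Real$ and $\mathcal{J}_1 \in H_0^1(\omega)$. For this~$w$ one has $\partial_1 w = 0$ and $\|\nabla' w\|_{\sii(\omega)}^2 = E_1\|w\|_{\sii(\omega)}^2$, while the potentially troublesome term $\sigma_s\,\partial_\tau w$ vanishes: either $\dot\theta\equiv 0$ forces $\sigma_s = 0$, or $\omega$ is rotationally invariant about the origin, in which case $\partial_\tau \mathcal{J}_1 = 0$. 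Hence $J_s^{(-1)}[w] = (1/4)\|w\|_{\Hilbert_{-1}}^2$, and the variational characterisation yields $\mu(s) = 1/4$.

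For the twisted case I will argue by contradiction, mimicking the proof of the cornerstone Lemma~\ref{Lem.cornerstone}. By Proposition~\ref{Prop.compact}, $\mu(s)$ is an eigenvalue attained by a non-trivial $w \in \Hilbert_{-1}^1$. If $\mu(s) = 1/4$, equality in the previous lower bound forces both
\begin{equation*}
  \partial_1 w = \sigma_s\,\partial_\tau w
  \qquad\text{and}\qquad
  \|\nabla' w(y_1,\cdot)\|_{\sii(\omega)}^2 = E_1\,\|w(y_1,\cdot)\|_{\sii(\omega)}^2 \quad \text{for a.e.\ } y_1 \in \Real.
\end{equation*}
The second identity, together with the simplicity of~$E_1$, imposes the separated structure $w(y) = \varphi(y_1)\,\mathcal{J}_1(y')$ with $\varphi \in \sii(\Real, K^{-1} dy_1)$; substituting into the first identity and taking the $\sii(\omega)$-inner product with~$\mathcal{J}_1$ gives $\dot\varphi\,\|\mathcal{J}_1\|_{\sii(\omega)}^2 = \sigma_s \varphi\,(\mathcal{J}_1,\partial_\tau \mathcal{J}_1)_{\sii(\omega)} = 0$, since the last inner product vanishes by an integration by parts. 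Thus $\varphi$ is a non-zero constant, and then $\partial_1 w = \sigma_s\,\partial_\tau w$ reduces to $\sigma_s(y_1)\,\partial_\tau \mathcal{J}_1(y') \equiv 0$ on~$\Omega_0$. This is impossible: under the twisting hypothesis, $\dot\theta\not\equiv 0$ implies that $\sigma_s$ is non-zero on an interval of~$\Real$, and $\omega$ not being rotationally symmetric implies that $\partial_\tau \mathcal{J}_1$ is non-zero on an open subset of~$\omega$, so the product is non-zero on a set of positive measure. Hence $\mu(s) > 1/4$.

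The only genuinely delicate step is the propagation of the slice-wise Poincar\'e equality to the separated form $w = \varphi\otimes\mathcal{J}_1$ within the weighted space $\Hilbert_{-1}^1$; everything else is a direct transcription of the argument used for Lemma~\ref{Lem.cornerstone}, with the finite measure $K^{-1}(y_1)\,dy_1$ on~$\Real$ replacing the finite interval of that lemma.
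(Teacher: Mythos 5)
Your argument is correct and follows essentially the same route as the paper's own proof, which likewise works with the form $J_s^{(-1)}$ to exhibit the shift $1/4$ explicitly, uses $w(y)=\mathcal{J}_1(y')$ as a test function for the untwisted bound, and for the twisted lower bound simply invokes ``proceed exactly as in the proof of Lemma~\ref{Lem.cornerstone}''. You have fleshed out that last citation into an explicit contradiction argument (pointwise version of $\partial_1 w = \sigma_s\partial_\tau w$, slice-wise saturation of Poincar\'e forcing $w=\varphi\otimes\mathcal{J}_1$, then $\dot\varphi=0$ from $(\mathcal{J}_1,\partial_\tau\mathcal{J}_1)_{\sii(\omega)}=0$, then $\sigma_s\,\partial_\tau\mathcal{J}_1\equiv 0$); that matches the mechanics of Lemma~\ref{Lem.cornerstone}, where the paper instead expands the $\sii$-norm of $\partial_1\psi-\dot\theta\partial_\tau\psi$ and concludes from the vanishing of the two non-negative pieces. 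The two are interchangeable here.

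Two small points worth tightening. First, you write the form domain as $\Hilbert_{-1}^1$, but the domain of $J_s^{(-1)}$ is $\mathcal{U}_{-1}\,\Dom(J_s^{(0)})$, whose defining norm also contains $\|y_1\cdot\|_{\Hilbert_{-1}}$; for the negative weight $K^{-1}=e^{-y_1^2/4}$ this is not \emph{a priori} the same as the plain weighted Sobolev space $\Hilbert_{-1}^1$, so the notation should be replaced by $\Dom(J_s^{(-1)})$. Second, for the final contradiction you need not only that $\sigma_s$ is nonzero somewhere but that $\partial_\tau\mathcal{J}_1$ fails to vanish on a set of positive measure; this follows from $\partial_\tau\mathcal{J}_1\not\equiv 0$ together with real-analyticity of the eigenfunction $\mathcal{J}_1$ in $\omega$, and it is worth saying so since the paper simply phrases the contradiction in terms of the $L^2$-norms $\|\sigma_s\|_{\sii(\Real,K^{-1})}$ and $\|\partial_\tau\mathcal{J}_1\|_{\sii(\omega)}$, which sidesteps the measure-theoretic point.
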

\begin{proof}
Since $J_s^{(-1)}[w] - \frac{1}{4} \, \|w\|_{\Hilbert_{-1}}^2  \geq 0$
for every $w \in \Dom(J_s^{(-1)})$, we clearly have $\mu(s) \geq 1/4$,
regardless whether the tube is twisted or not.
By definition, if it is untwisted,
then either $\sigma_s = 0$ identically in~$\Real$
for all $s \in [0,\infty)$
or $\partial_\tau \mathcal{J}_1 = 0$ identically in~$\omega$,
where~$\mathcal{J}_1$ is the positive eigenfunction corresponding to~$E_1$
of the Dirichlet Laplacian in~$\sii(\omega)$.
Consequently, choosing $w(y) = \mathcal{J}_1(y')$
as a test function for~$J_s^{(-1)}$,
we also get the opposite bound $\mu(s) \leq 1/4$
in the untwisted case.
To get the converse result, we can proceed exactly
as in the proof of Lemma~\ref{Lem.cornerstone}:
Assuming $\mu(s) = 1/4$ in the twisted case,
the variational definition of the eigenvalue~$\mu(s)$ would imply
$$
  \|\sigma_s\|_{\sii(\Real,K^{-1})} = 0
  \qquad\mbox{or}\qquad
  \|\partial_\tau\mathcal{J}_1\|_{\sii(\omega)} = 0
  \,,
$$
a contradiction.
\end{proof}

Now we are in a position to prove Theorem~\ref{Thm.I}.

\subsubsection{Proof of Theorem~\ref{Thm.I}}
%
Assume~\eqref{locally}.
It follows from Proposition~\ref{Prop.positivity}
and Corollary~\ref{Corol.strong} that the number
\begin{equation}
  \gamma := \inf_{s \in [0,\infty)}\mu(s) - 1/4
\end{equation}
is positive if, and only if, $\Omega_\theta$~is twisted.
In any case, \eqref{spectral.reduction.integral}~implies
$$
  \|\tilde{u}(s)\|_{\mathcal{H}_{1}}
  \leq \|\tilde{u}_0\|_{\mathcal{H}_{1}} \, e^{-(\gamma+1/4)s}
$$
for every $s \in [0,\infty)$.
Using this estimate instead of~\eqref{instead},
but following the same type of arguments as in Section~\ref{Sec.improved}
below~\eqref{instead}, we get
$$
  \|S(t)\|_{\sii(\Omega_\theta,K) \to \sii(\Omega_\theta)}
  \leq (1+t)^{-(\gamma+1/4)}
$$
for every $t \in [0,\infty)$.
This is equivalent to~\eqref{decay.similar.I}
and we know that~$\gamma$ is positive if~$\Omega_\theta$ is twisted.
On the other hand, in view of Proposition~\ref{Prop.decay.1D},
estimate~\eqref{decay.similar.I} cannot hold with positive~$\gamma$
if the tube is untwisted.
This concludes the proof of Theorem~\ref{Thm.I}.

\section{Conclusions}\label{Sec.end}
%
The classical interpretation of the heat equation~\eqref{I.heat}
is that its solution~$u$ gives the evolution of the temperature distribution
of a medium in the tube cooled down to zero on the boundary.
It also represents the simplest version of the stochastic Fokker-Planck equation
describing the Brownian motion in~$\Omega_\theta$
with killing boundary conditions.
Then the results of the present paper can be interpreted
as that the twisting implies a faster cool-down/death
of the medium/Brownian particle in the tube.
Many other diffusive processes in nature are governed by~\eqref{I.heat}.

Our proof that there is an extra decay rate for solutions of~\eqref{I.heat}
if the tube is twisted was far from being straightforward.
This is a bit surprising because the result is quite
expectable from the physical interpretation,
if one notices that the twist (locally) enlarges
the boundary of the tube, while it (locally) keeps the volume unchanged.
(By ``locally'' we mean that it is the case for bounded tubes,
otherwise both the quantities are infinite of course.)
At the same time, the Hardy inequality~\eqref{I.Hardy}
did not play a direct role in the proof of
Theorems~\ref{Thm.rate} and~\ref{Thm.I}
(although, combining any of the theorems with Theorem~\ref{Thm.Hardy},
we eventually know that the existence of the Hardy inequality
is equivalent to the extra decay rate for the heat semigroup).
It would be desirable to find a more direct proof
of Theorem~\ref{Thm.rate} based on~\eqref{I.Hardy}.

We conjecture that the inequality of Theorem~\ref{Thm.rate}
can be replaced by equality, \ie, $\Gamma(\Omega_\theta)=3/4$
if the tube is twisted and~\eqref{locally} holds.
The study of the quantitative dependence of the constant~$\gamma$
from Theorem~\ref{Thm.I} on properties of~$\dot\theta$
and the geometry of~$\omega$ also constitutes an interesting open problem.
Note that the two quantities are related by
$\gamma+1/4 \leq \Gamma(\Omega_\theta)$.

Throughout the paper we assumed~\eqref{locally}.
We expect that this hypothesis can be replaced
by a mere vanishing of~$\dot\theta$ at infinity
to get Theorems~\ref{Thm.rate} and~\ref{Thm.I}
(and also Theorem~\ref{Thm.Hardy}).
This less restrictive assumption is known to be enough to ensure~\eqref{spectrum}
and there exist versions of~\eqref{I.Hardy}
even if~\eqref{locally} is violated (\cf~\cite{K6-erratum}).
However, it is quite possible that a slower decay of~$\dot\theta$
at infinity will make the effect of twisting stronger.
In particular, can~$\Gamma(\Omega_\theta)$ be strictly greater than~$3/4$
if the tube is twisted and~$\dot\theta$
decays to zero very slowly at infinity?

Equally, it is not clear whether Proposition~\ref{Prop.limit} holds
if~\eqref{locally} is violated.
There are some further open problems related to the Hardy inequality
of Theorem~\ref{Thm.Hardy}.
In particular, it is frustrating that the proof of the theorem
does not extend to all~$\dot\theta$ merely vanishing at infinity.
In this context, it would be highly desirable to establish
a more quantitative version of Lemma~\ref{Lem.cornerstone},
\ie~to get a positive lower bound to $\lambda(\dot{\theta},I)$
depending explicitly on~$\dot\theta$, $|I|$ and~$\omega$.

On the other hand, a completely different situation will appear
if one allows twisted tubes for which~$\dot\theta$
does not vanish at infinity.
Then the spectrum of $-\Delta_D^{\Omega_\theta}$
can actually start strictly above~$E_1$
(\cf~\cite{EKov_2005} or \cite[Corol.~6.6]{K6})
and an extra exponential decay rate for our semigroup~$S(t)$
follows at once already in $\sii(\Omega_\theta)$.
In such situations it is more natural to study the decay of the semigroup
associated with $-\Delta_D^{\Omega_\theta}$ shifted
by the lowest point in its spectrum.
As a particularly interesting situation we mention
the case of periodically twisted tubes,
for which a systematic analysis based on the Floquet-Bloch decomposition
could be developed in the spirit of
\cite{Duro-Zuazua_2000,Ortega-Zuazua_2000}.

We expect that the extra decay rate will be induced
also in other twisted models for which Hardy inequalities
have been established recently \cite{K3,KK2}.

It would be also interesting to study the effect of twisting
in other physical models. As one possible direction
of this research, let us mention the question
of the long time behaviour of the solutions
to the dissipative wave equation
\cite{Gallay-Raugel_1997,Gallay-Raugel_1998,Orive-Pazoto-Zuazua_2001}.

Let us conclude the paper by a general conjecture.
We expect that there is always an improvement
of the decay rate for the heat semigroup if a Hardy inequality holds:
\begin{Conjecture}
Let~$\Omega$ be an open connected subset of~$\Real^d$.
Let~$H$ and $H_+$ be two self-adjoint operators in $\sii(\Omega)$
such that
$
  \inf\sigma(H)
  = \inf\sigma(H_+)
  = 0
$.
Assume that there is a positive smooth function $\varrho:\Omega\to\Real$
such that $H_+ \geq \varrho$,
while $H-V$ is a negative operator
for any non-negative non-trivial $V \in C_0^\infty(\Omega)$.
Then there exists a positive function $K:\Omega\to\Real$
such that
$$
  \lim_{t\to\infty}
  \frac{\|e^{- H_+ t}\|_{\sii(\Omega,K)\to\sii(\Omega)}}
  {\|e^{- H t}\|_{\sii(\Omega,K)\to\sii(\Omega)}} = 0
  \,.
$$
\end{Conjecture}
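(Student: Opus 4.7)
The plan is to mimic the three-step strategy that succeeded for twisted tubes: introduce a scaling transformation adapted to the heat flow, reformulate the evolution in a weighted space whose weight $K$ yields a generator with compact resolvent, and then exploit the spectral gap produced by the Hardy weight $\varrho$ for $H_+$ and its absence for $H$. In the generic setting of the conjecture the correct analogue of the parabolic scaling $y=t^{-1/2}x$ is not given, so the very first task is to identify an intrinsic scaling of $H$ and a weight $K$ adapted to it; the default candidate is the Gaussian $K(x)=\exp(|x|^2/4)$ of the Escobedo--Kavian framework, which should be at least heuristically valid when $H$ has dimension-like heat-kernel asymptotics, but in principle $K$ should be tied to the quasi-stationary state of $H$ at the spectral threshold.

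For the denominator, the hypothesis that $H-V$ drops strictly below zero for every non-trivial $V\in C_0^\infty(\Omega)$ with $V\geq 0$ is exactly the criticality of $H$ in Pinchover's sense. Criticality is equivalent to the existence of a positive null-state $\phi_0$ solving $H\phi_0=0$, the Agmon ground state, whose approximations minimise the associated Rayleigh quotient. I would build trial functions $\psi_n=\phi_0\,\chi_n$ with slowly decaying cut-offs $\chi_n$, in the spirit of the constructions used in the proofs of Proposition~\ref{Prop.difference} and Proposition~\ref{Prop.decay.1D}, and compute the ratio $\|e^{-Ht}\psi_n\|_{\sii(\Omega)}/\|\psi_n\|_{\sii(\Omega,K)}$. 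Because $H\phi_0=0$ one expects only polynomial loss from the cut-off perturbation, giving a lower bound $\|e^{-Ht}\|_{\sii(\Omega,K)\to\sii(\Omega)} \geq c\,(1+t)^{-\alpha}$ with an exponent $\alpha$ determined by how $\phi_0$ weighs against $K$ at infinity.

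For the numerator, I would combine the weighted-space reformulation with the Hardy bound $H_+\geq\varrho$ exactly as in Section~\ref{Sec.similar}. After passage to self-similar variables and to the weighted space, the Hardy term contributes a strictly positive piece to the quadratic form of the rescaled generator, over and above what the weight alone would provide. A strong-resolvent convergence argument modelled on Proposition~\ref{Prop.strong} should then identify the large-time limit as a reduced operator whose spectral bottom lies strictly above that of the analogous limit for $H$, reflecting a Dirichlet-type condition imposed at the concentration set of $\varrho$ under the rescaling. This yields a strictly better exponential rate in the self-similar time and hence an additional polynomial factor after unscaling, so that the quotient tends to zero.

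The main obstacle is the genericity of $\Omega$ and of the pair $(H,H_+)$. In the paper the proof rests on three special features: a distinguished longitudinal direction along which the parabolic scaling acts while transverse coordinates are frozen, concentration of $\varrho$ on a single cross-section in the scaling limit (enabling the reduction to an explicit one-dimensional model), and identical transverse ground states for the straight and twisted Laplacians. For the general conjecture one must replace these by an intrinsic scaling and weight compatible with both $H$ and $H_+$ simultaneously. A promising route is the Doob transform by $\phi_0$, which turns $H$ into a symmetric diffusion on the measure $\phi_0^2\,dx$; the Hardy hypothesis then becomes a Poincar\'e-type inequality for the transformed $H_+$, and Nash--Sobolev methods on the resulting Dirichlet form may convert this into the desired quantitative gap. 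Matching the exponent $\alpha$ of the lower bound with the Nash-type exponent governing the upper bound, so that the difference is genuinely positive, appears to be the most delicate point and is the place where one expects the conjecture might have to be sharpened or qualified.
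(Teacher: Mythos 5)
The statement you are addressing is labelled a \emph{Conjecture} in the paper, and the authors say explicitly at the end of Section~\ref{Sec.end} that ``in general, the proof seems to be a hardly accessible problem''; the paper proves it only in the special case $H=H_0-E_1$, $H_+=H_\theta-E_1$ treated in Sections~\ref{Sec.Hardy}--\ref{Sec.similar}. There is therefore no proof in the paper to compare against, and your text --- to its credit --- does not actually claim to supply one: it is an extended plan-of-attack that defers the substantive steps to ``future work'' and ends by conceding that ``the conjecture might have to be sharpened or qualified.'' As a proof, it has genuine gaps at every stage.

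Concretely, the argument never produces a candidate weight $K$: you correctly note that the Gaussian $e^{|x|^2/4}$ is only a ``default'' tied to the Escobedo--Kavian framework and that the right $K$ should be adapted to the quasi-stationary state of $H$, but you do not define it, let alone verify that the rescaled generator of $e^{-H_+t}$ becomes compact-resolvent or coercive in $\sii(\Omega,K)$. The lower bound for $\|e^{-Ht}\|_{\sii(\Omega,K)\to\sii(\Omega)}$ via trial functions $\psi_n=\phi_0\chi_n$ is plausible in spirit (it mirrors Propositions~\ref{Prop.difference} and~\ref{Prop.decay.1D}), but the exponent $\alpha$ you extract depends on the unknown $K$ and on the decay of the Agmon ground state $\phi_0$, neither of which is pinned down; the bound is therefore notational, not an estimate. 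Most importantly, the upper bound for $\|e^{-H_+t}\|$ rests on a ``strong-resolvent convergence argument modelled on Proposition~\ref{Prop.strong},'' but that proposition crucially used the product structure $\Omega_0=\Real\times\omega$, the one-dimensional parabolic scaling in the longitudinal variable only, the concentration of $\sigma_s^2$ on $\{0\}\times\omega$, and the explicit transverse ground state $\mathcal{J}_1$. None of this has an analogue for a generic pair $(H,H_+)$ on a generic $\Omega\subset\Real^d$, and you identify this as ``the main obstacle'' without resolving it. In short, the only non-speculative content is the correct observation that the hypothesis on $H$ is Pinchover-type criticality, hence the existence of a positive null-state, and the correct inventory of what made the tube proof work; the step where those ingredients would be replaced by intrinsic ones is precisely the step that is missing, and it coincides with what the authors themselves flag as the open difficulty.

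What you \emph{should} do with this item, given that it is an unproved conjecture, is either restrict to a class where the scaling and weight can be made intrinsic (for instance, operators of the form $-\Delta-W$ on $\Real^d$ with $W$ compactly supported, where the Escobedo--Kavian weight and scaling are canonical and the Agmon ground state is explicit), and prove the statement there, or else show by a concrete example that the conjecture as stated is too strong --- e.g.\ exhibit $(H,H_+)$ with $H_+\geq\varrho>0$, $H$ critical, and the same large-time decay in some weighted $\sii$ norm. Either outcome would be a real contribution; the present proposal is an accurate summary of the paper's method and its limitations rather than a proof.
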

\noindent
A similar conjecture can be stated for the same type of operators
in different Hilbert spaces.
In this paper we proved the conjecture for the special situation where
$H=H_0-E_1$ and $H_+=H_\theta-E_1$ (transformed Dirichlet Laplacians)
in $\sii(\Omega)$, with $\Omega=\Omega_0$ (unbounded tube).
In general, the proof seems to be a hardly accessible problem.

\section*{Acknowledgment}
The first author would like to thank
the Basque Center for Applied Mathematics in Bilbao
where part of this work was carried out,
for hospitality and support.
The work was partially supported by the Czech Ministry of Education,
Youth and Sports within the project LC06002,
and by Grant MTM2008-03541 of the MICINN (Spain).
%
\addcontentsline{toc}{section}{References}
\bibliography{bib}
\bibliographystyle{amsplain}
\end{document}